\newtheorem{theorem}{Theorem}[section]
\newtheorem{proposition}[theorem]{Proposition}
\newtheorem{lemma}[theorem]{Lemma}
\theoremstyle{definition}
\newtheorem{definition}[theorem]{Definition}
\newtheorem{remark}[theorem]{Remark}
\newtheorem{notation}{Notation}
\newtheorem{example}[theorem]{Example}
\newcommand{\oline}[1]{\mathbin{\overline{#1}}}
\newcommand{\uline}[1]{\mathbin{\underline{#1}}}
\begin{document}

\title{Biquandle (co)homology and handlebody-links}

\author[A.~Ishii]{Atsushi Ishii}
\address{Institute of Mathematics, University of Tsukuba, Ibaraki 305-8571, Japan}
\email{aishii@math.tsukuba.ac.jp}

\author[M.~Iwakiri]{Masahide Iwakiri}
\address{Graduate School of Science and Engineering, Saga University, Saga, 840-8502, Japan}
\email{iwakiri@ms.saga-u.ac.jp}

\author[S.~Kamada]{Seiichi Kamada}
\address{Department of Mathematics, Osaka City University, Osaka 558-8585, Japan}
\email{skamada@sci.osaka-cu.ac.jp}

\author[J.~Kim]{Jieon Kim}
\address{Department of Mathematics, Pusan National University, Busan, 46241, Republic of Korea}
\email{jieonkim@pusan.ac.kr}

\author[S.~Matsuzaki]{Shosaku Matsuzaki}
\address{Faculty of Engineering, Takushoku University, Tokyo, 193-0985, Japan}
\email{smatsuza@ner.takushoku-u.ac.jp}

\author[K.~Oshiro]{Kanako Oshiro}
\address{Department of Information and Communication Sciences, Sophia University, Tokyo 102-8554, Japan}
\email{oshirok@sophia.ac.jp}

\keywords{biquandles, multiple conjugation biquandles, handlebody-links,
parallel biquandle operations, (co)homology groups, cocycle invariants}
\subjclass[2010]{57M27, 57M25}
\thanks{Atsushi Ishii was supported by JSPS KAKENHI Grant Number 15K0486. Seiichi Kamada was supported by JSPS KAKENHI Grant Number  26287013. Jieon Kim was supported by JSPS KAKENHI Grant Number 15F15319 and a JSPS Postdoctral Fellowship for Foreign Researchers. Kanako Oshiro  was supported by JSPS KAKENHI Grant Number 16K17600.}

\date{}

\maketitle

\begin{abstract}
In this paper, we introduce the (co)homology group of a multiple conjugation biquandle.
It is the (co)homology group of the prismatic chain complex, which is related to the homology of foams introduced by J.~S.~Carter, modulo a certain subchain complex.
We construct invariants for $S^1$-oriented handlebody-links using $2$-cocycles.
When a multiple conjugation biquandle $X\times\mathbb{Z}_{\operatorname{type}X_Y}$ is obtained from a biquandle $X$ using $n$-parallel operations, we provide a $2$-cocycle (or $3$-cocycle) of the multiple conjugation biquandle $X\times\mathbb{Z}_{\operatorname{type}X_Y}$ from a $2$-cocycle (or $3$-cocycle) of the biquandle $X$ equipped with an $X$-set $Y$.
\end{abstract}

\section{Introduction}

The first and the second authors of this paper studied quandle cocycle invariants for spatial graphs and handlebody-links by using flows of spatial graphs~\cite{IshiiIwakiri12}, which is a generalization of quandle cocycle invariants for knots and links in \cite{CarterJelsovskyKamadaLangfordSaito03}.
The idea of quandle colorings of spatial graphs using flows was generalized to colorings using $G$-family of quandles in~\cite{IshiiIwakiriJangOshiro13} and colorings using multiple conjugation quandles in~\cite{Ishii15MCQ}.
The notion of a $G$-family of quandles is generalized to a $G$-family of biquandles and the notion of a partially multiplicative biquandle was introduced in~\cite{IshiiNelson16}.
In our previous paper~\cite{IshiiIwakiriKamadaKimMatsuzakiOshiroPMB}, we introduced a multiple conjugation biquandle as a generalization of a multiple conjugation quandle, and proved that it is the universal algebra for defining a semi-arc coloring invariant for handlebody-links.
We note that our multiple conjugation biquandle is different from a partially multiplicative biquandle of~\cite{IshiiNelson16}.
In the paper we also extended the notion of $n$-parallel biquandle operations for any integer $n$, not only for positive integers, and showed that any biquandle gives a multiple conjugation biquandle via $n$-parallel biquandle operations.

In this paper, we discuss homology theory for multiple conjugation biquandles.
The homology of a multiple conjugation biquandle is the homology of the prismatic chain complex, which is related to the homology of foams introduced by J.~S.~Carter, modulo a certain subchain complex.
Then we can use the homology theory to construct cocycle invariants of handlebody-links.

The paper is organized as follows.
In Section~\ref{sect:MCB}, we recall the definition of a multiple conjugation biquandle from~\cite{IshiiIwakiriKamadaKimMatsuzakiOshiroPMB} and how to obtain a multiple conjugation biquandle from a $G$-family of biquandles.
In Section~\ref{sect:X-set}, the notion of an action of a multiple conjugation biquandle on a set is introduced.
A set is called an $X$-set if an action of a multiple conjugation biquandle $X$ is equipped.
In Section~\ref{sect:coloring}, we recall colorings of an $S^1$-oriented handlebody-link by a multiple conjugation biquandle from~\cite{IshiiIwakiriKamadaKimMatsuzakiOshiroPMB}.
In Section~\ref{sect:chaincomplex}, we introduce a chain complex for a multiple conjugation biquandle, called the prismatic chain complex, and in Section~\ref{sect:degenerate}, the (co)homology group of a multiple conjugation biquandle is defined as the (co)homology group of the prismatic chain complex modulo a certain subchain complex.
In Section~\ref{sect:cocycle}, when a multiple conjugation biquandle
$X\times\mathbb{Z}_{\operatorname{type}X_Y}=\bigsqcup_{x\in X}\{x\}\times\mathbb{Z}_{\operatorname{type}X_Y}$ is obtained from a biquandle $X$ using $n$-parallel operations, we provide a $2$-cocycle (or $3$-cocycle) of the multiple conjugation biquandle $X\times\mathbb{Z}_{\operatorname{type}X_Y}$ from a $2$-cocycle (or $3$-cocycle) of the biquandle $X$ equipped with an $X$-set $Y$.
In Section~\ref{sect:cocycleinvariant}, the cocycle invariant for $S^1$-oriented handlebody-links is defined by using a $2$-cocycle of a multiple conjugation biquandle and we give an example of the cocycle invariant.

\section{Multiple conjugation biquandles (MCB)} \label{sect:MCB}

In this section, we review the definitions, properties and examples of multiple conjugation biquandles.
We have two equivalent definitions for a multiple conjugation biquandle.
The first one is useful to study coloring invariants, the second one is useful to check that a given algebra is a multiple conjugation biquandle, see \cite{IshiiIwakiriKamadaKimMatsuzakiOshiroPMB} for details.


\begin{definition}[\cite{FennRourkeSanderson95,KauffmanRadford03}]
A \textit{biquandle} is a non-empty set $X$ with binary operations $\uline{*},\oline{*}:X\times X\to X$ satisfying the following axioms.
\begin{itemize}
\item[(B1)]
For any $x\in X$, $x\uline{*}x=x\oline{*}x$.
\item[(B2)]
For any $a\in X$, the map $\uline{*}a:X\to X$ sending $x$ to $x\uline{*}a$ is bijective.
\item[]
For any $a\in X$, the map $\oline{*}a:X\to X$ sending $x$ to $x\oline{*}a$ is bijective.
\item[]
The map $S:X\times X\to X\times X$ defined by $S(x,y)=(y\oline{*}x,x\uline{*}y)$ is bijective.
\item[(B3)]
For any $x,y,z\in X$,
\begin{align*}
&(x\uline{*}y)\uline{*}(z\uline{*}y)=(x\uline{*}z)\uline{*}(y\oline{*}z), \\
&(x\uline{*}y)\oline{*}(z\uline{*}y)=(x\oline{*}z)\uline{*}(y\oline{*}z), \\
&(x\oline{*}y)\oline{*}(z\oline{*}y)=(x\oline{*}z)\oline{*}(y\uline{*}z).
\end{align*}
\end{itemize}
\end{definition}

Let $X$ be the disjoint union of groups $G_\lambda$ ($\lambda\in\Lambda$).
We denote by $G_a$ the group $G_\lambda$ to which $a\in X$ belongs.
We denote by $e_\lambda$ the identity of $G_\lambda$.

\begin{definition}[\cite{IshiiIwakiriKamadaKimMatsuzakiOshiroPMB}] \label{def:MGQ1}
A \textit{multiple conjugation biquandle} is a biquandle $(X,\uline{*},\oline{*})$ consisting of the disjoint union of groups $G_\lambda$ ($\lambda\in\Lambda$) satisfying the following axioms.
\begin{itemize}
\item
For any $a,x\in X$, $\uline{*}x:G_a\to G_{a\uline{*}x}$ and $\oline{*}x:G_a\to G_{a\oline{*}x}$ are group homomorphisms.
\item
For any $a,b\in G_\lambda$ and $x\in X$,
\begin{align*}
&x\uline{*}ab=(x\uline{*}a)\uline{*}(b\oline{*}a), \\
&x\oline{*}ab=(x\oline{*}a)\oline{*}(b\oline{*}a), \\
&a^{-1}b\oline{*}a=ba^{-1}\uline{*}a.
\end{align*}
\end{itemize}
\end{definition}

Here is an alternative definition of a multiple conjugation biquandle.

\begin{definition}[\cite{IshiiIwakiriKamadaKimMatsuzakiOshiroPMB}] \label{def:MGQ2}
A \textit{multiple conjugation biquandle} $X$ is the disjoint union of groups $G_\lambda$ ($\lambda\in\Lambda$) with binary operations $\uline{*},\oline{*}:X\times X\to X$ satisfying following axioms.
\begin{itemize}
\item
For any $x,y,z\in X$,
\begin{align*}
&(x\uline{*}y)\uline{*}(z\uline{*}y)=(x\uline{*}z)\uline{*}(y\oline{*}z), \\
&(x\uline{*}y)\oline{*}(z\uline{*}y)=(x\oline{*}z)\uline{*}(y\oline{*}z), \\
&(x\oline{*}y)\oline{*}(z\oline{*}y)=(x\oline{*}z)\oline{*}(y\uline{*}z).
\end{align*}
\item
For any $a,x\in X$, $\uline{*}x:G_a\to G_{a\uline{*}x}$ and $\oline{*}x:G_a\to G_{a\oline{*}x}$ are group homomorphisms.
\item
For any $a,b\in G_\lambda$ and $x\in X$,
\begin{align*}
&x\uline{*}ab=(x\uline{*}a)\uline{*}(b\oline{*}a),
\hspace{1em}x\uline{*}e_\lambda=x, \\
&x\oline{*}ab=(x\oline{*}a)\oline{*}(b\oline{*}a),
\hspace{1em}x\oline{*}e_\lambda=x, \\
&a^{-1}b\oline{*}a=ba^{-1}\uline{*}a.
\end{align*}
\end{itemize}
\end{definition}

Here we show that $G$-families of biquandles are useful to obtain multiple conjugation biquandles.

\begin{definition}[\cite{IshiiNelson16}]
Let $G$ be a group with identity element $e$.
A \textit{$G$-family of biquandles} is a non-empty set $X$ with two families of binary operations $\uline{*}^g,\oline{*}^g:X\times X\to X$ ($g\in G$) satisfying the following axioms.
\begin{itemize}
\item
For any $x,y,z\in X$ and $g,h\in G$,
\begin{align*}
&(x\uline{*}^gy)\uline{*}^h(z\oline{*}^gy)
=(x\uline{*}^hz)\uline{*}^{h^{-1}gh}(y\uline{*}^hz), \\
&(x\oline{*}^gy)\uline{*}^h(z\oline{*}^gy)
=(x\uline{*}^hz)\oline{*}^{h^{-1}gh}(y\uline{*}^hz), \\
&(x\oline{*}^gy)\oline{*}^h(z\oline{*}^gy)
=(x\oline{*}^hz)\oline{*}^{h^{-1}gh}(y\uline{*}^hz).
\end{align*}
\item
For any $x,y\in X$ and $g,h\in G$,
\begin{align*}
&x\uline{*}^{gh}y=(x\uline{*}^gy)\uline{*}^h(y\uline{*}^gy),
\hspace{1em}x\uline{*}^ey=x, \\
&x\oline{*}^{gh}y=(x\oline{*}^gy)\oline{*}^h(y\oline{*}^gy),
\hspace{1em}x\oline{*}^ey=x, \\
&x\uline{*}^gx=x\oline{*}^gx.
\end{align*}
\end{itemize}
\end{definition}

\begin{example}[\cite{IshiiIwakiriKamadaKimMatsuzakiOshiroPMB}] \label{prop:Z-family}
For a biquandle $(X,\uline{*},\oline{*})$, we define \textit{parallel biquandle operations} $\uline{*}^{[n]}$ and $\oline{*}^{[n]}$ by the following rule:
\begin{align*}
a\uline{*}^{[0]}b&=a, & a\uline{*}^{[1]}b&=a\uline{*}b, & a\uline{*}^{[i+j]}b&=(a\uline{*}^{[i]}b)\uline{*}^{[j]}(b\uline{*}^{[i]} b),\text{ and} \\
a\oline{*}^{[0]}b&=a, & a\oline{*}^{[1]}b&=a\oline{*}b, & a\oline{*}^{[i+j]}b&=(a\oline{*}^{[i]}b)\oline{*}^{[j]}(b\oline{*}^{[i]} b)
\end{align*}
for $i,j\in \mathbb{Z}$.
We note that biquandle operations $\uline{*}^{[n]}$ and $\oline{*}^{[n]}$ are well-defined, $b\uline{*}^{[-1]}b$ is the unique element satisfying $(b\uline{*}^{[-1]}b)\uline{*}^{[1]}(b\uline{*}^{[-1]}b)=b$ and $a\uline{*}^{[-1]}b=a\uline{*}^{-1}(b\uline{*}^{[-1]}b)$, where $\uline{*}^{-1}a$ is the inverse of $\uline{*}a$.
Then $(X,(\uline{*}^{[n]})_{n\in\mathbb{Z}},(\oline{*}^{[n]})_{n\in\mathbb{Z}})$ is a $\mathbb{Z}$-family of biquandles.

We define the \textit{type} of a biquandle $X$ by
\[ \operatorname{type}X=\min\{n>0\,|\,\text{$a\uline{*}^{[n]}b=a=a\oline{*}^{[n]}b$ ($\forall a, b\in X$)}\}. \]
If $X$ is finite, then $(X,(\uline{*}^{[n]})_{n\in\mathbb{Z}_{\operatorname{type}X}},(\oline{*}^{[n]})_{n\in\mathbb{Z}_{\operatorname{type}X}})$ is a $\mathbb{Z}_{\operatorname{type}X}$-family of biquandles \cite{IshiiNelson16}.
\end{example}

\begin{example}\label{exam:2.5}(\cite{IshiiIwakiriKamadaKimMatsuzakiOshiroPMB})
Let $G$ be a group with identity $e$, and let $\varphi:G\to Z(G)$ be a homomorphism, where $Z(G)$ is the center of $G$.
\begin{itemize}
\item[(1)]
Let $X$ be a group with a right action of $G$.
We denote by $x^g$ the result of $g$ acting on $x$.
We define binary operations $\uline{*}^g,\oline{*}^g:X\times X\to X$ by $x\uline{*}^gy=(xy^{-1})^gy^{\varphi(g)}$, $x\oline{*}^gy=x^{\varphi(g)}$.
Then $X$ is a $G$-family of biquandles, which we call a \textit{$G$-family of generalized Alexander biquandles}.

\item[(2)]
Let $R$ be a ring and $X$ a right $R[G]$-module, where $R[G]$ is the group ring of $G$ over $R$.
We define binary operations $\uline{*}^g,\oline{*}^g:X\times X\to X$ by $x\uline{*}^gy=xg+y(\varphi(g)-g)$, $x\oline{*}^gy=x\varphi(g)$.
Then $X$ is a $G$-family of biquandles, which we call a \textit{$G$-family of Alexander biquandles}.
\end{itemize}
\end{example}

The following proposition shows that we can construct a multiple conjugation biquandle from any $G$-family of biquandles, where we call it the \textit{associated multiple conjugation biquandle}.
In particular, Example~\ref{prop:Z-family} implies that we can construct a multiple conjugation biquandle from any biquandle.

\begin{proposition}[\cite{IshiiNelson16}] \label{prop:G-family2MCB}
Let $(X,(\uline{*}^g)_{g\in G},(\oline{*}^g)_{g\in G})$ be a $G$-family of biquandles.
Then $X\times G=\bigsqcup_{x\in X}\{x\}\times G$ is a multiple conjugation biquandle with the binary operations $\uline{*},\oline{*}:(X\times G)\times(X\times G)\to X\times G$ defined by
\begin{align*}
&(x,g)\uline{*}(y,h)=(x\uline{*}^hy,h^{-1}gh), &&(x,g)\oline{*}(y,h)=(x\oline{*}^hy,g).
\end{align*}
\end{proposition}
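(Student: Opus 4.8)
The plan is to verify the axioms of Definition~\ref{def:MGQ2} directly; this is more economical than Definition~\ref{def:MGQ1}, since Definition~\ref{def:MGQ2} is self-contained and so we need not separately check that $X\times G$ is a biquandle. One regards $X\times G$ as the disjoint union of the groups $G_{(x,g)}=\{x\}\times G$ ($x\in X$), where $\{x\}\times G$ is a copy of $G$ with product $(x,g)(x,h)=(x,gh)$ and identity $(x,e)$. Every axiom is then checked coordinatewise: the $X$-coordinate of each identity will come from the $G$-family axioms, and the $G$-coordinate will be an elementary identity in $G$, typically the fact that $g\mapsto h^{-1}gh$ is an automorphism of $G$ together with $(gh)^{-1}k(gh)=h^{-1}(g^{-1}kg)h$.

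First I would check the homomorphism conditions and the unit laws, which are immediate. For a fixed $(y,h)$, the map $\uline{*}(y,h)$ carries $\{x\}\times G$ into $\{x\uline{*}^{h}y\}\times G$ and acts on the $G$-coordinate by $g\mapsto h^{-1}gh$, hence is a group homomorphism; and $\oline{*}(y,h)$ carries $\{x\}\times G$ into $\{x\oline{*}^{h}y\}\times G$ and is the identity on the $G$-coordinate, hence also a homomorphism. The laws $x\uline{*}e_\lambda=x$ and $x\oline{*}e_\lambda=x$ (with $e_\lambda=(y,e)$) follow from $x\uline{*}^{e}y=x=x\oline{*}^{e}y$ and the triviality of conjugation by $e$.

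Next I would treat the relations involving products $ab$ inside a single group. Taking $a=(y,g)$, $b=(y,h)$ in $\{y\}\times G$ and $x=(z,k)$ arbitrary, one gets $x\uline{*}ab=\bigl(z\uline{*}^{gh}y,\;(gh)^{-1}k(gh)\bigr)$ and $(x\uline{*}a)\uline{*}(b\oline{*}a)=\bigl((z\uline{*}^{g}y)\uline{*}^{h}(y\oline{*}^{g}y),\;h^{-1}(g^{-1}kg)h\bigr)$; to match the $X$-coordinates one first rewrites $y\oline{*}^{g}y=y\uline{*}^{g}y$ and then applies $z\uline{*}^{gh}y=(z\uline{*}^{g}y)\uline{*}^{h}(y\uline{*}^{g}y)$, while the $G$-coordinates agree because conjugation by $gh$ is conjugation by $g$ followed by conjugation by $h$. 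The $\oline{*}$-version is identical, and $a^{-1}b\oline{*}a=ba^{-1}\uline{*}a$ reduces on the $X$-coordinate to $y\oline{*}^{g}y=y\uline{*}^{g}y$ and on the $G$-coordinate to $g^{-1}h=g^{-1}(hg^{-1})g$.

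Finally I would verify the three long relations. Writing $x=(x_1,a)$, $y=(y_1,b)$, $z=(z_1,c)$, one checks that each side of each relation is a pair whose $G$-coordinate is a fixed conjugate of $a$ (namely $b^{-1}c^{-1}acb$ for the first relation, $b^{-1}ab$ for the second, and $a$ for the third), so the $G$-coordinates agree by direct computation. The $X$-coordinates then reduce, after the obvious relabeling of variables and of the group elements, precisely to the first, second, and third $G$-family axioms respectively; for instance the first relation becomes $(x_1\uline{*}^{c}z_1)\uline{*}^{b}(y_1\oline{*}^{c}z_1)=(x_1\uline{*}^{b}y_1)\uline{*}^{b^{-1}cb}(z_1\uline{*}^{b}y_1)$. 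There is no essential difficulty here; the one place requiring care is the bookkeeping---matching each of the three long relations to the correct $G$-family axiom, inserting the right instance of $u\uline{*}^{g}u=u\oline{*}^{g}u$, and confirming that the conjugation exponents in the $G$-coordinates coincide on the two sides.
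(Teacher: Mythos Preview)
The paper does not supply its own proof of this proposition; it is simply quoted from \cite{IshiiNelson16}. Your direct verification of the axioms of Definition~\ref{def:MGQ2} is correct and complete: the homomorphism and unit conditions, the product rules, the relation $a^{-1}b\oline{*}a=ba^{-1}\uline{*}a$, and the three long relations all reduce exactly as you describe, with each $X$-coordinate identity matching one of the $G$-family axioms (after inserting $u\uline{*}^{g}u=u\oline{*}^{g}u$ where needed) and each $G$-coordinate identity being an elementary conjugation computation.
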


\section{X-sets of multiples conjugation biquandles} \label{sect:X-set}

\begin{definition}
For a multiple conjugation biquandle $X=\bigsqcup_{\lambda\in\Lambda}G_\lambda$, an \textit{$X$-set} is a non-empty set $Y$ with a map $*:Y\times X\to Y$ satisfying the following axioms.
\begin{itemize}
\item
For any $y\in Y$ and $a,b\in G_\lambda$, we have $y*e_\lambda=y$ and $y*(ab)=(y*a)*(b\oline{*}a)$, where $e_\lambda$ is the identity of $G_\lambda$.
\item
For any $y\in Y$ and $a,b\in X$, we have $(y*a)*(b\oline{*}a)=(y*b)*(a\uline{*}b)$.
\end{itemize}
\end{definition}

Any multiple conjugation biquandle $X$ itself is an $X$-set with the map $\uline{*}$ or $\oline{*}$.
Any singleton set $\{y_0\}$ is also an $X$-set with the map $*$ defined by $y_0*x=y_0$ for $x\in X$, which is called a \textit{trivial $X$-set}.
The index set $\Lambda$ is an $X$-set with the map $\uline{*}$ (resp.~$\oline{*}$) defined by $\lambda\uline{*}x=\mu$ (resp.~$\lambda\oline{*}x=\mu$) when $e_\lambda\uline{*}x=e_\mu$ (resp.~$e_\lambda\oline{*}x=e_\mu$) for $\lambda,\mu\in\Lambda$ and $x\in X$.

%
%

\section{Colorings for handlebody-links} \label{sect:coloring}

A \textit{handlebody-link} is the disjoint union of handlebodies embedded in the $3$-sphere $S^3$.
A \textit{handlebody-knot} is a one component handlebody-link.
In this paper, we assume that every component of a handlebody-link is of genus at least $1$.
An \textit{$S^1$-orientation} of a handlebody-link is a collection of $S^1$-orientations of all genus-$1$ components, that are solid tori, of the handlebody-link.
Here an $S^1$-orientation of a solid torus means an orientation of its core $S^1$.
Two $S^1$-oriented handlebody-links are \textit{equivalent} if there is an orientation-preserving self-homeomorphism of $S^3$ which sends one to the other preserving the $S^1$-orientation.

A \textit{Y-orientation} of a trivalent graph $G$, whose vertices are of valency $3$, is a direction of all edges of $G$ satisfying that every vertex of $G$ is both the initial vertex of a directed edge and the terminal vertex of a directed edge (See Figure \ref{fig:Y-orientations}).
In this paper, a trivalent graph may have a circle component, which has no vertices.

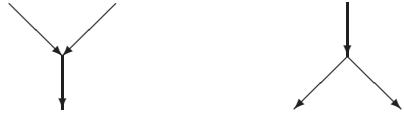
\begin{figure}
\center
\begin{picture}(50,40)
 \put(20,20){\vector(0,-1){20}}
 \put(0,40){\vector(1,-1){20}}
 \put(40,40){\vector(-1,-1){20}}
\end{picture}
\hspace{5em}
\begin{picture}(50,40)
 \put(20,40){\vector(0,-1){20}}
 \put(20,20){\vector(-1,-1){20}}
 \put(20,20){\vector(1,-1){20}}
\end{picture}
\caption{Y-orientations}
\label{fig:Y-orientations}
\end{figure}

A finite graph embedded in $S^3$ is called a \textit{spatial graph}.
For a Y-oriented spatial trivalent graph $K$ and an $S^1$-oriented handlebody-link $H$, we say that $K$ \textit{represents} $H$ if $H$ is a regular neighborhood of $K$ and the $S^1$-orientation of $H$ agrees with the Y-orientation.
Then any $S^1$-oriented handlebody-link can be represented by some Y-oriented spatial trivalent graph.
A \textit{diagram} of an $S^1$-oriented handlebody-link is a diagram of a Y-oriented spatial trivalent graph which represents the handlebody-link.
\textit{R1--R6 moves} are local moves depicted in Figure~\ref{fig:ReidemeisterMoves}.
\textit{Y-oriented R1--R6 moves} are R1--R6 moves between two diagrams with Y-orientations which are identical except in the disk where the move applied.
The following theorem plays a fundamental role in constructing $S^1$-oriented handlebody-link invariants.

\begin{theorem}[\cite{Ishii15Markov}] \label{thm:ReidemeisterMoves}
For a diagram $D_i$ of a Y-oriented spatial trivalent graph $K_i$ $(i=1,2)$, $K_1$ and $K_2$ represent an equivalent $S^1$-oriented handlebody-link if and only if $D_1$ and $D_2$ are related by a finite sequence of Y-oriented R1--R6 moves.
\end{theorem}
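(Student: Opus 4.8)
The plan is to prove the two implications separately, the substance lying in the forward (``only if'') direction. For the ``if'' direction, it suffices to check that each Y-oriented move R1--R6 of Figure~\ref{fig:ReidemeisterMoves} is realized by an ambient isotopy of $S^3$ taking a regular neighborhood of the underlying trivalent graph to a regular neighborhood of the modified one, and preserving the $S^1$-orientation: for R1--R5 this is the standard fact that the graph Reidemeister moves are ambient isotopies, while for R6 (the IH-move) one invokes that a genus-$2$ handlebody has exactly two trivalent spines, exchanged by the move, so that the regular neighborhood is unchanged; none of the moves touches a circle component or reverses a direction along it, so the induced orientations on the solid-torus components persist. Thus $D_1\rightsquigarrow D_2$ by Y-oriented R1--R6 moves implies that $K_1$ and $K_2$ represent equivalent $S^1$-oriented handlebody-links.

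For the forward direction I would work in two stages. \emph{Stage 1: forget the orientations.} Observe first that the solid-torus components of a handlebody-link are exactly the circle components of any representing trivalent graph, because a connected trivalent graph with first Betti number $1$ must be a single circle; hence the $S^1$-orientation of $H$ is literally a choice of orientation on the circle components of $D_i$. Next, it is known (Ishii's earlier work on unoriented handlebody-links) that two spatial trivalent graphs, circle components allowed, have ambient isotopic regular neighborhoods if and only if they are related by ambient isotopy together with finitely many IH-moves, the geometric input being that any two trivalent spines of a handlebody of genus $\ge 2$ are connected by a sequence of IH-moves. Combining this with the diagrammatic Reidemeister theorem for spatial trivalent graphs under ambient isotopy (generated by R1--R5), one obtains that if $K_1$ and $K_2$ represent the same handlebody-link, then $D_1$ and $D_2$ are related by a finite sequence of \emph{unoriented} R1--R6 moves.

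\emph{Stage 2: restore the orientations.} One must upgrade the Stage-1 sequence to a sequence of Y-oriented moves matching the prescribed Y-orientations of $D_1$ and $D_2$. Since R1--R5 are supported near a single arc or region and fix each trivalent vertex setwise, a Y-orientation on one side extends uniquely across any such move; they are thus transparent to Y-orientations. The only real point is R6. For an IH-move, record for each of the four edge-ends entering the move disk whether it is oriented into or out of the disk; a short case analysis over these $2^4$ patterns shows that a side of the IH-move carries a Y-orientation extending this boundary data precisely when the four ends are not all oriented alike (cf.\ the local pictures in Figure~\ref{fig:Y-orientations}), and that this condition depends only on the unordered set of four ends, hence holds for the ``H'' side if and only if it holds for the ``I'' side. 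Therefore each IH-move in the sequence lifts to a Y-oriented R6 move; propagating the lifts produces a Y-orientation of $D_2$ agreeing with the prescribed one on all circle components (both realize the given $S^1$-orientation). Finally one must reconcile the two Y-orientations on the higher-genus part of $D_2$, which I would do by showing that any two Y-orientations of a fixed trivalent diagram are related by Y-oriented R1--R6 moves --- e.g.\ by sliding inserted R1 kinks, or by do-then-undo pairs of IH-moves taken with the two admissible lifts --- after which the two sequences concatenate to the one sought.

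The main obstacle is Stage 2: all the three-dimensional topology is already packaged into the unoriented calculus, and the work special to this theorem is the combinatorial control of how Y-orientations pass through the IH-move, together with the verification that the list R1--R6 is rich enough to connect any two Y-orientations inducing the same $S^1$-orientation without an auxiliary move. I expect this bookkeeping, rather than any new geometric difficulty, to be where the proof is won or lost.
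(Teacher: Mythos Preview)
The paper does not prove this theorem at all: it is stated with a citation to \cite{Ishii15Markov} and used as a black box, so there is no ``paper's own proof'' to compare against. Your proposal is therefore not a reconstruction of anything in this paper but an attempt to outline the argument of the cited reference.

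As an outline, your two-stage plan is reasonable and matches the expected shape of such a proof: reduce to the known unoriented calculus (R1--R6 generate equivalence of handlebody-links, via the IH/spine theory and the graph Reidemeister theorem), then lift through the Y-orientations. Your observation that genus-$1$ components are exactly circle components (so the $S^1$-orientation lives only on circles, which R6 never touches) is correct and is the reason the orientation bookkeeping is tractable.

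The soft spot is exactly where you flag it. In Stage~2 you assert that any two Y-orientations of a fixed diagram inducing the same $S^1$-orientation are connected by Y-oriented R1--R6 moves, but you only gesture at a mechanism (``sliding R1 kinks'' or ``do-then-undo IH pairs with different lifts''). This is the nontrivial content of the theorem beyond the unoriented case, and it requires a concrete local move or sequence that flips the direction of a single non-circle edge while staying within the Y-oriented calculus; your sketch does not supply one. Similarly, your claim that the boundary pattern at an IH disk admits a Y-oriented extension on the I-side iff on the H-side is correct, but you should also note that when both sides admit two extensions the lift is not canonical, and this ambiguity is precisely what you must exploit (or control) in the reconciliation step. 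Until that step is made explicit, the forward direction is not complete.
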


\begin{figure}
\mbox{}\hspace{1em}
\begin{minipage}{24pt}
\begin{picture}(24,48)
 \qbezier(0,0)(0,12)(6,24) 
 \qbezier(6,24)(12,36)(18,36) 
 \qbezier(18,12)(12,12)(7.5,21) 
 \qbezier(4.5,27)(0,36)(0,48) 
 \qbezier(18,12)(24,12)(24,24)
 \qbezier(18,36)(24,36)(24,24)
\end{picture}
\end{minipage}
~$\overset{\text{R1}}{\leftrightarrow}$~
\begin{minipage}{0pt}
\begin{picture}(0,48)
 \qbezier(0,0)(0,24)(0,48)
\end{picture}
\end{minipage}
~$\overset{\text{R1}}{\leftrightarrow}$~
\begin{minipage}{24pt}
\begin{picture}(24,48)
 \qbezier(0,48)(0,36)(6,24) 
 \qbezier(6,24)(12,12)(18,12) 
 \qbezier(18,36)(12,36)(7.5,27) 
 \qbezier(4.5,21)(0,12)(0,0) 
 \qbezier(18,12)(24,12)(24,24)
 \qbezier(18,36)(24,36)(24,24)
\end{picture}
\end{minipage}
\hfill
\begin{minipage}{24pt}
\begin{picture}(24,48)
 \qbezier(0,24)(0,30)(12,36) 
 \qbezier(12,36)(24,42)(24,48) 
 \qbezier(24,24)(24,30)(15,34.5) 
 \qbezier(9,37.5)(0,42)(0,48) 
 \qbezier(24,0)(24,6)(12,12) 
 \qbezier(12,12)(0,18)(0,24) 
 \qbezier(0,0)(0,6)(9,10.5) 
 \qbezier(15,13.5)(24,18)(24,24) 
\end{picture}
\end{minipage}
~$\overset{\text{R2}}{\leftrightarrow}$~
\begin{minipage}{24pt}
\begin{picture}(24,48)
 \qbezier(0,0)(0,24)(0,48)
 \qbezier(24,0)(24,24)(24,48)
\end{picture}
\end{minipage}
\hfill
\begin{minipage}{32pt}
\begin{picture}(32,48)
 \qbezier(0,32)(0,36)(8,40) 
 \qbezier(8,40)(16,44)(16,48) 
 \qbezier(16,32)(16,36)(10,39) 
 \qbezier(6,41)(0,44)(0,48) 
 \qbezier(32,32)(32,40)(32,48)
 \qbezier(16,16)(16,20)(24,24) 
 \qbezier(24,24)(32,28)(32,32) 
 \qbezier(32,16)(32,20)(26,23) 
 \qbezier(22,25)(16,28)(16,32) 
 \qbezier(0,16)(0,24)(0,32)
 \qbezier(0,0)(0,4)(8,8) 
 \qbezier(8,8)(16,12)(16,16) 
 \qbezier(16,0)(16,4)(10,7) 
 \qbezier(6,9)(0,12)(0,16) 
 \qbezier(32,0)(32,8)(32,16)
\end{picture}
\end{minipage}
~$\overset{\text{R3}}{\leftrightarrow}$~
\begin{minipage}{32pt}
\begin{picture}(32,48)
 \qbezier(16,32)(16,36)(24,40) 
 \qbezier(24,40)(32,44)(32,48) 
 \qbezier(32,32)(32,36)(26,39) 
 \qbezier(22,41)(16,44)(16,48) 
 \qbezier(0,32)(0,40)(0,48)
 \qbezier(0,16)(0,20)(8,24) 
 \qbezier(8,24)(16,28)(16,32) 
 \qbezier(16,16)(16,20)(10,23) 
 \qbezier(6,25)(0,28)(0,32) 
 \qbezier(32,16)(32,24)(32,32)
 \qbezier(16,0)(16,4)(24,8) 
 \qbezier(24,8)(32,12)(32,16) 
 \qbezier(32,0)(32,4)(26,7) 
 \qbezier(22,9)(16,12)(16,16) 
 \qbezier(0,0)(0,8)(0,16)
\end{picture}
\end{minipage}
\hspace{1em}\mbox{}\vspace{2em}\\
\begin{minipage}{24pt}
\begin{picture}(24,48)
 \qbezier(0,24)(0,30)(12,36) 
 \qbezier(12,36)(24,42)(24,48) 
 \qbezier(24,24)(24,30)(15,34.5) 
 \qbezier(9,37.5)(0,42)(0,48) 
 \qbezier(12,12)(12,6)(12,0) 
 \qbezier(12,12)(0,18)(0,24) 
 \qbezier(12,12)(24,18)(24,24) 
\end{picture}
\end{minipage}
~$\overset{\text{R4}}{\leftrightarrow}$~
\begin{minipage}{24pt}
\begin{picture}(24,48)
 \qbezier(0,48)(0,36)(0,24)
 \qbezier(24,48)(24,36)(24,24)
 \qbezier(12,12)(12,6)(12,0) 
 \qbezier(12,12)(0,18)(0,24) 
 \qbezier(12,12)(24,18)(24,24) 
\end{picture}
\end{minipage}
~$\overset{\text{R4}}{\leftrightarrow}$~
\begin{minipage}{24pt}
\begin{picture}(24,48)
 \qbezier(24,24)(24,30)(12,36) 
 \qbezier(12,36)(0,42)(0,48) 
 \qbezier(0,24)(0,30)(9,34.5) 
 \qbezier(15,37.5)(24,42)(24,48) 
 \qbezier(12,12)(12,6)(12,0) 
 \qbezier(12,12)(0,18)(0,24) 
 \qbezier(12,12)(24,18)(24,24) 
\end{picture}
\end{minipage}
\hfill
\begin{minipage}{24pt}
\begin{picture}(24,48)
 \qbezier(0,24)(0,32)(10,40)\qbezier(14,43)(19,46)(24,48)
 \qbezier(0,24)(5,24)(10,24)\qbezier(14,24)(19,24)(24,24)
 \qbezier(0,24)(0,16)(10,8)\qbezier(14,5)(19,2)(24,0)
 \qbezier(12,0)(12,24)(12,48)
\end{picture}
\end{minipage}
~$\overset{\text{R5}}{\leftrightarrow}$~
\begin{minipage}{24pt}
\begin{picture}(24,48)
 \qbezier(8,24)(8,36)(24,48)
 \qbezier(8,24)(16,24)(24,24)
 \qbezier(8,24)(8,12)(24,0)
 \qbezier(0,0)(0,24)(0,48)
\end{picture}
\end{minipage}
~$\overset{\text{R5}}{\leftrightarrow}$~
\begin{minipage}{24pt}
\begin{picture}(24,48)
 \qbezier(0,24)(0,36)(24,48)
 \qbezier(0,24)(12,24)(24,24)
 \qbezier(0,24)(0,12)(24,0)
 \qbezier(12,0)(12,2)(12,4)
 \qbezier(12,10)(12,16)(12,22)
 \qbezier(12,26)(12,32)(12,38)
 \qbezier(12,44)(12,46)(12,48)
\end{picture}
\end{minipage}
\hfill
\begin{minipage}{24pt}
\begin{picture}(24,48)
 \qbezier(0,48)(6,42)(12,36)
 \qbezier(24,48)(18,42)(12,36)
 \qbezier(12,12)(12,24)(12,36)
 \qbezier(0,0)(6,6)(12,12)
 \qbezier(24,0)(18,6)(12,12)
\end{picture}
\end{minipage}
~$\overset{\text{R6}}{\leftrightarrow}$~
\begin{minipage}{24pt}
\begin{picture}(24,48)
 \qbezier(0,48)(3,36)(6,24)
 \qbezier(24,48)(21,36)(18,24)
 \qbezier(6,24)(12,24)(18,24)
 \qbezier(0,0)(3,12)(6,24)
 \qbezier(24,0)(21,12)(18,24)
\end{picture}
\end{minipage}
\caption{The Reidemeister moves for handlebody-links.}
\label{fig:ReidemeisterMoves}
\end{figure}
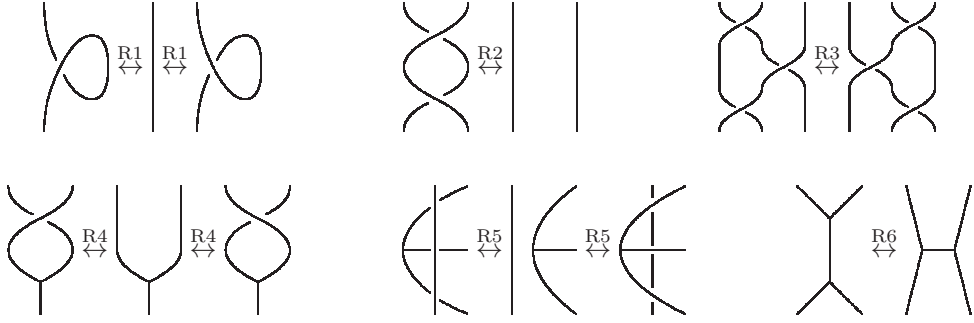

For a diagram $D$ of a Y-oriented spatial trivalent graph, we denote by $\mathcal{SA}(D)$ the set of semi-arcs of $D$, where a semi-arc is a piece of a curve each of whose endpoints is a crossing or a vertex.
We denote by $C(D)$ (resp.~$V(D)$) the set of crossings (resp.~vertices) of $D$.
We denote by $\mathcal{R}(D)$ the set of complementary regions of $D$.

\begin{definition}[\cite{IshiiIwakiriKamadaKimMatsuzakiOshiroPMB}] \label{def:coloring}
Let $X=\bigsqcup_{\lambda\in\Lambda}G_\lambda$ be a multiple conjugation biquandle.
Let $D$ be a diagram of an $S^1$-oriented handlebody-link $H$.
An \textit{$X$-coloring} of $D$ is a map $C:\mathcal{SA}(D)\to X$ satisfying that 
\vspace{1em}
\begin{center}
\begin{minipage}{65pt}
\begin{picture}(65,40)(-5,0)
 \put(40,40){\vector(-1,-1){40}}
 \put(0,40){\line(1,-1){18}}
 \put(22,18){\vector(1,-1){18}}
 \put(5,35){\makebox(0,0){\normalsize$\nearrow$}}
 \put(5,5){\makebox(0,0){\normalsize$\searrow$}}
 \put(35,35){\makebox(0,0){\normalsize$\searrow$}}
 \put(35,5){\makebox(0,0){\normalsize$\nearrow$}}
 \put(-3,40){\makebox(0,0)[r]{\normalsize$a$}}
 \put(-3,0){\makebox(0,0)[r]{\normalsize$b$}}
 \put(43,40){\makebox(0,0)[l]{\normalsize$b\oline{*}a$}}
 \put(43,0){\makebox(0,0)[l]{\normalsize$a\uline{*}b$}}
\end{picture}
\end{minipage}
\hspace{5em}
\begin{minipage}{65pt}
\begin{picture}(65,40)(-5,0)
 \put(0,40){\vector(1,-1){40}}
 \put(40,40){\line(-1,-1){18}}
 \put(18,18){\vector(-1,-1){18}}
 \put(5,35){\makebox(0,0){\normalsize$\nearrow$}}
 \put(5,5){\makebox(0,0){\normalsize$\searrow$}}
 \put(35,35){\makebox(0,0){\normalsize$\searrow$}}
 \put(35,5){\makebox(0,0){\normalsize$\nearrow$}}
 \put(-3,40){\makebox(0,0)[r]{\normalsize$a$}}
 \put(-3,0){\makebox(0,0)[r]{\normalsize$b$}}
 \put(43,40){\makebox(0,0)[l]{\normalsize$b\uline{*}a$}}
 \put(43,0){\makebox(0,0)[l]{\normalsize$a\oline{*}b$}}
\end{picture}
\end{minipage}
\end{center}
\vspace{1em}
holds at each crossing, and
\vspace{1em}
\begin{center}
\begin{minipage}{50pt}
\begin{picture}(50,40)(-5,0)
 \put(20,20){\vector(0,-1){20}}
 \put(0,40){\vector(1,-1){20}}
 \put(40,40){\vector(-1,-1){20}}
 \put(21,10){\makebox(0,0){\normalsize$\rightarrow$}}
 \put(5,35){\makebox(0,0){\normalsize$\nearrow$}}
 \put(35,35){\makebox(0,0){\normalsize$\searrow$}}
 \put(-3,40){\makebox(0,0)[r]{\normalsize$a$}}
 \put(43,40){\makebox(0,0)[l]{\normalsize$a^{-1}b\oline{*}a$}}
 \put(23,0){\makebox(0,0)[l]{\normalsize$b$}}
\end{picture}
\end{minipage}
\hspace{5em}
\begin{minipage}{50pt}
\begin{picture}(50,40)(-5,0)
 \put(20,40){\vector(0,-1){20}}
 \put(20,20){\vector(-1,-1){20}}
 \put(20,20){\vector(1,-1){20}}
 \put(21,30){\makebox(0,0){\normalsize$\rightarrow$}}
 \put(5,5){\makebox(0,0){\normalsize$\searrow$}}
 \put(35,5){\makebox(0,0){\normalsize$\nearrow$}}
 \put(23,40){\makebox(0,0)[l]{\normalsize$b$}}
 \put(-3,0){\makebox(0,0)[r]{\normalsize$a$}}
 \put(43,0){\makebox(0,0)[l]{\normalsize$a^{-1}b\oline{*}a$}}
\end{picture}
\end{minipage}
\end{center}
\vspace{1em}
holds at each vertex, where the normal orientation is obtained by rotating the usual orientation counterclockwise by $\pi/2$ on the diagram.
We denote by $\operatorname{Col}_X(D)$ the set of $X$-colorings of $D$.
\end{definition}
Next theorem shows that for any two diagrams $D$ and $D'$ of the same $S^1$-oriented handlebody-link, there exists a one-to-one correspondence between $\operatorname{Col}_X(D)$ and $\operatorname{Col}_X(D')$.

\begin{theorem}[\cite{IshiiIwakiriKamadaKimMatsuzakiOshiroPMB}] \label{thm:coloring1}
Let $X=\bigsqcup_{\lambda\in\Lambda}G_\lambda$ be a multiple conjugation biquandle.
Let $D$ be a diagram of an $S^1$-oriented handlebody-link $H$.
Let $D'$ be a diagram obtained by applying one of the Y-oriented R1--R6 moves to the diagram $D$ once.
For an $X$-coloring $C$ of $D$, there is a unique $X$-coloring $C'$ of $D'$ which coincides with $C$ except the place where the move is applied.
\end{theorem}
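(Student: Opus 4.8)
The plan is to verify the statement one move at a time. Since $D'$ is obtained from $D$ by a single Y-oriented move among R1--R6, it suffices to treat each move type together with the finitely many variants that arise from the possible Y-orientations and crossing signs inside the disk where the move is performed. In every case the semi-arcs of $D$ and of $D'$ lying outside that disk are canonically identified, so $C'$ is forced to agree with $C$ on them; the task is then (i) to show that this partial map on $\mathcal{SA}(D')$ extends in exactly one way to a map satisfying the crossing and vertex conditions of Definition~\ref{def:coloring}, and (ii) to check that the colors $C$ assigns to the boundary semi-arcs are compatible with the relations that the disk of $D'$ imposes among them, so that such an extension exists at all. Concretely, I would label the boundary semi-arcs by elements $a,b,c,\dots\in X$, write down the colors that the conditions of Definition~\ref{def:coloring} force on the interior semi-arcs on the $D$-side and on the $D'$-side, and compare the two.

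For R1--R3 only crossings are involved and the verification is the usual biquandle computation. R2 uses the bijectivity of $\uline{*}a$, $\oline{*}a$ and of $S$ from axiom (B2), which simultaneously produces the interior colors on the $D'$-side and forces them to be unique; R1 uses in addition the identity $x\uline{*}x=x\oline{*}x$ from (B1), so that a curl does not change the color of the strand carrying it; and R3 is precisely the three identities of (B3), which ensure that the triple of boundary colors produced on the two sides of the move coincide. Bijectivity in (B2) also lets me cut down the number of orientation and sign variants treated by hand, since reversing a strand or switching which branch is the overpass amounts to applying $S^{\pm1}$ or the inverses of $\uline{*}a$, $\oline{*}a$ componentwise.

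The moves R4 and R5 involve a trivalent vertex together with the neighboring crossings. Here the governing relations are the vertex rule of Definition~\ref{def:coloring}, under which the semi-arcs at a vertex carry $a$, $b$ and $a^{-1}b\oline{*}a$, and the crossing rule for the strand being moved. Matching the two sides of such a move reduces to the fact that $\uline{*}x$ and $\oline{*}x$ are group homomorphisms (Definition~\ref{def:MGQ1}) together with the product identities $x\uline{*}(ab)=(x\uline{*}a)\uline{*}(b\oline{*}a)$ and $x\oline{*}(ab)=(x\oline{*}a)\oline{*}(b\oline{*}a)$: these are exactly what is needed to match the effect of pushing a strand past a vertex from one side with the effect of pushing it past from the other side, and to see that crossing the two edges emanating from a vertex preserves the vertex relation. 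The normalizations $x\uline{*}e_\lambda=x=x\oline{*}e_\lambda$ cover the degenerate sub-cases, and uniqueness of the interior colors again follows because each operation used is invertible.

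I expect R6, the IH-move exchanging two trivalent vertices joined by an edge, to be the main obstacle, since the vertex relations and the group structure come into play at the same time. After labeling the four outer semi-arcs and expressing the colors at the two vertices and on the (interior) connecting edge in terms of $a$, $b$, $a^{-1}b\oline{*}a$ and the $\uline{*}$- and $\oline{*}$-operations, the equality of the two configurations should reduce to associativity inside $G_\lambda$, to the mixed axiom $a^{-1}b\oline{*}a=ba^{-1}\uline{*}a$ of Definition~\ref{def:MGQ1} (used to pass between the two descriptions of the connecting edge), and once more to the homomorphism and product identities. Once the chosen representative of R6 is settled, the remaining Y-oriented variants of R6, R5 and R4 follow by the symmetry reductions mentioned above. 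Assembling all cases shows that $C\mapsto C'$ is well defined; since the reverse move provides its inverse, the induced map $\operatorname{Col}_X(D)\to\operatorname{Col}_X(D')$ is a bijection, which is the assertion of the theorem.
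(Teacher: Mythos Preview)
The paper does not give its own proof of this theorem: it is quoted from \cite{IshiiIwakiriKamadaKimMatsuzakiOshiroPMB}, and the only supporting material here is Figure~\ref{fig:coloredReidemeisterMove}, which records the explicit colors on both sides of the Y-oriented R4--R6 moves (with the shorthand $a\triangle b=b^{-1}a\oline{*}b$). Your plan---treat each Y-oriented move separately, fix the boundary colors, and check that the interior colors on the $D'$-side are forced and consistent using the biquandle axioms (B1)--(B3) for R1--R3 and the MCB axioms of Definition~\ref{def:MGQ1} for R4--R6---is exactly the approach behind that figure and the cited paper, so there is no discrepancy.

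Two small calibrations. First, your description of R4 as ``pushing a strand past a vertex'' better fits R5; the R4 in Figure~\ref{fig:ReidemeisterMoves} is the twist of the two edges incident to a vertex, and the check there (as Figure~\ref{fig:coloredReidemeisterMove} shows) is that applying $S^{\pm1}$ to the pair $(a,a^{-1}b\oline{*}a)$ returns another pair in vertex form, which uses $a^{-1}b\oline{*}a=ba^{-1}\uline{*}a$ together with the homomorphism property of $\uline{*}x,\oline{*}x$. Second, R6 is not the hardest case: as displayed in Figure~\ref{fig:coloredReidemeisterMove}, it reduces to an associativity-type identity for $\triangle$ (equivalently, associativity in $G_\lambda$ plus the product rule $x\oline{*}ab=(x\oline{*}a)\oline{*}(b\oline{*}a)$), and no crossing relation intervenes. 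The genuinely longest verification is R5, where one must match the single crossing on one side with the two crossings plus vertex on the other; this is where both the homomorphism axiom and the product identities $x\uline{*}ab=(x\uline{*}a)\uline{*}(b\oline{*}a)$, $x\oline{*}ab=(x\oline{*}a)\oline{*}(b\oline{*}a)$ are used simultaneously, exactly as you anticipated.
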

\noindent See Figure~\ref{fig:coloredReidemeisterMove} for $X$-colored Y-oriented R4--R6 moves, where all arcs are directed from top to bottom, except for the Reidemeister moves R4, and $a\triangle b=b^{-1}a\oline{*}b$.

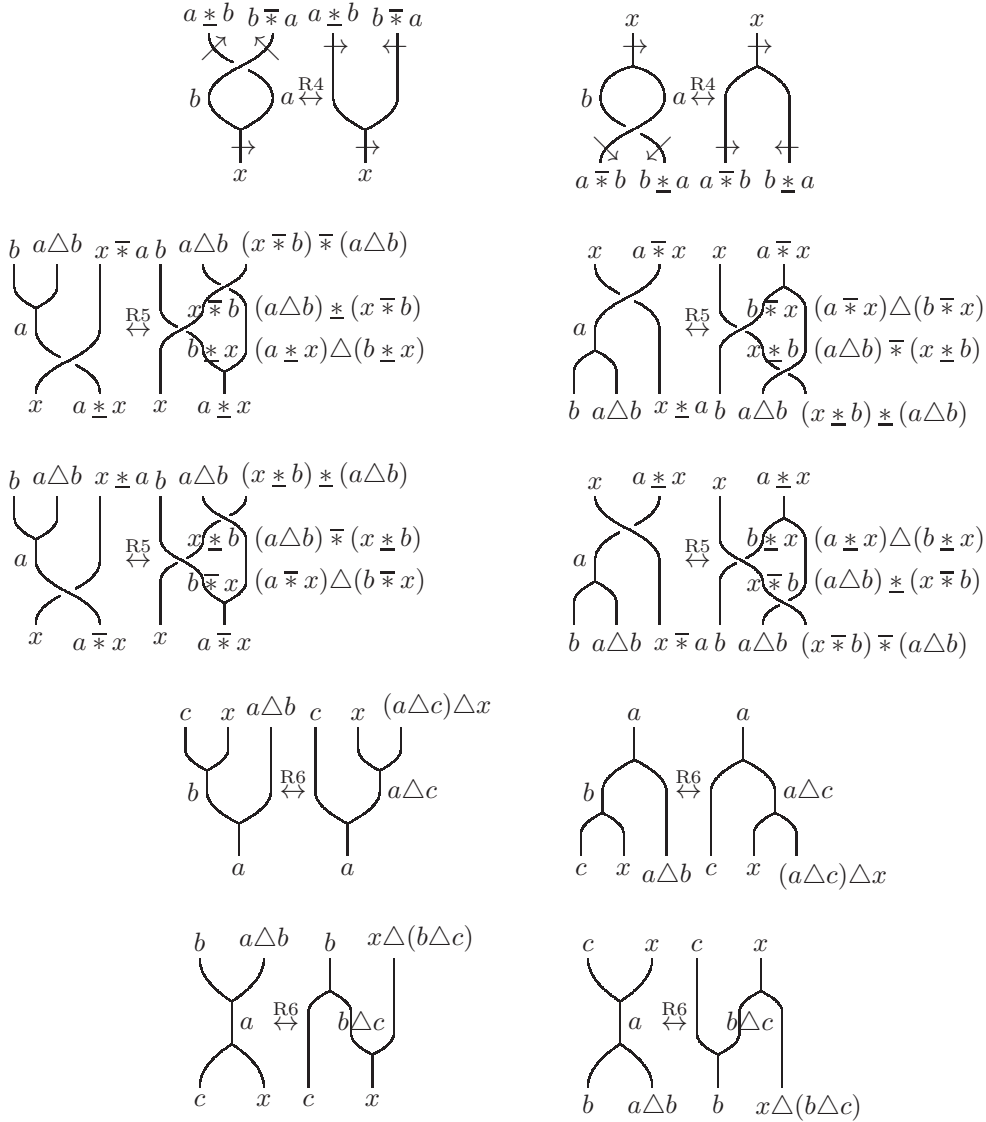
\begin{figure}
\mbox{}\hfill
\begin{minipage}{36pt}
\begin{picture}(24,72)(-6,-12)
 \qbezier(0,24)(0,30)(12,36) 
 \qbezier(12,36)(24,42)(24,48) 
 \qbezier(24,24)(24,30)(15,34.5) 
 \qbezier(9,37.5)(0,42)(0,48) 
 \qbezier(12,12)(12,6)(12,0) 
 \qbezier(12,12)(0,18)(0,24) 
 \qbezier(12,12)(24,18)(24,24) 
 \put(2,43){\makebox(0,0){\normalsize$\nearrow$}}
 \put(22,43){\makebox(0,0){\normalsize$\nwarrow$}}
 \put(13,5){\makebox(0,0){\normalsize$\rightarrow$}}
 \put(0,51){\makebox(0,0)[b]{\normalsize$a\uline{*}b$}}
 \put(24,51){\makebox(0,0)[b]{\normalsize$b\oline{*}a$}}
 \put(12,-3){\makebox(0,0)[t]{\normalsize$x$}}
 \put(-3,24){\makebox(0,0)[r]{\normalsize$b$}}
 \put(27,24){\makebox(0,0)[l]{\normalsize$a$}}
\end{picture}
\end{minipage}
~$\overset{\text{R4}}{\leftrightarrow}$~
\begin{minipage}{24pt}
\begin{picture}(24,72)(0,-12)
 \qbezier(0,48)(0,36)(0,24)
 \qbezier(24,48)(24,36)(24,24)
 \qbezier(12,12)(12,6)(12,0) 
 \qbezier(12,12)(0,18)(0,24) 
 \qbezier(12,12)(24,18)(24,24) 
 \put(1,43){\makebox(0,0){\normalsize$\rightarrow$}}
 \put(23,43){\makebox(0,0){\normalsize$\leftarrow$}}
 \put(13,5){\makebox(0,0){\normalsize$\rightarrow$}}
 \put(0,51){\makebox(0,0)[b]{\normalsize$a\uline{*}b$}}
 \put(24,51){\makebox(0,0)[b]{\normalsize$b\oline{*}a$}}
 \put(12,-3){\makebox(0,0)[t]{\normalsize$x$}}
\end{picture}
\end{minipage}
\hfill
\begin{minipage}{36pt}
\begin{picture}(24,72)(-6,-12)
 \qbezier(12,36)(12,42)(12,48) 
 \qbezier(12,36)(0,33)(0,24) 
 \qbezier(12,36)(24,33)(24,24) 
 \qbezier(0,0)(0,6)(12,12) 
 \qbezier(12,12)(24,18)(24,24) 
 \qbezier(24,0)(24,6)(15,10.5) 
 \qbezier(9,13.5)(0,18)(0,24) 
 \put(13,43){\makebox(0,0){\normalsize$\rightarrow$}}
 \put(2,5){\makebox(0,0){\normalsize$\searrow$}}
 \put(22,5){\makebox(0,0){\normalsize$\swarrow$}}
 \put(12,51){\makebox(0,0)[b]{\normalsize$x$}}
 \put(0,-3){\makebox(0,0)[t]{\normalsize$a\oline{*}b$}}
 \put(24,-3){\makebox(0,0)[t]{\normalsize$b\uline{*}a$}}
 \put(-3,24){\makebox(0,0)[r]{\normalsize$b$}}
 \put(27,24){\makebox(0,0)[l]{\normalsize$a$}}
\end{picture}
\end{minipage}
~$\overset{\text{R4}}{\leftrightarrow}$~
\begin{minipage}{24pt}
\begin{picture}(24,72)(0,-12)
 \qbezier(12,36)(12,42)(12,48) 
 \qbezier(12,36)(0,30)(0,24) 
 \qbezier(12,36)(24,30)(24,24) 
 \qbezier(0,24)(0,12)(0,0)
 \qbezier(24,24)(24,12)(24,0)
 \put(13,43){\makebox(0,0){\normalsize$\rightarrow$}}
 \put(1,5){\makebox(0,0){\normalsize$\rightarrow$}}
 \put(23,5){\makebox(0,0){\normalsize$\leftarrow$}}
 \put(12,51){\makebox(0,0)[b]{\normalsize$x$}}
 \put(0,-3){\makebox(0,0)[t]{\normalsize$a\oline{*}b$}}
 \put(24,-3){\makebox(0,0)[t]{\normalsize$b\uline{*}a$}}
\end{picture}
\end{minipage}
\hfill\mbox{}\vspace{5mm}

\begin{minipage}{38pt}
\begin{picture}(32,72)(0,-12)
 \qbezier(0,40)(0,44)(0,48)
 \qbezier(16,40)(16,44)(16,48)
 \qbezier(8,32)(8,28)(8,24) 
 \qbezier(8,32)(0,36)(0,40) 
 \qbezier(8,32)(16,36)(16,40) 
 \qbezier(32,24)(32,36)(32,48)
 \qbezier(8,0)(8,6)(20,12) 
 \qbezier(20,12)(32,18)(32,24) 
 \qbezier(32,0)(32,6)(23,10.5) 
 \qbezier(17,13.5)(8,18)(8,24) 
 \put(0,51){\makebox(0,0)[b]{\normalsize$b$}}
 \put(16,51){\makebox(0,0)[b]{\normalsize$a\triangle b$}}
 \put(30,51){\makebox(0,0)[bl]{\normalsize$x\oline{*}a$}}
 \put(8,-3){\makebox(0,0)[t]{\normalsize$x$}}
 \put(32,-3){\makebox(0,0)[t]{\normalsize$a\uline{*}x$}}
 \put(5,24){\makebox(0,0)[r]{\normalsize$a$}}
\end{picture}
\end{minipage}
~$\overset{\text{R5}}{\leftrightarrow}$~
\begin{minipage}{96pt}
\begin{picture}(32,72)(0,-12)
 \qbezier(16,32)(16,36)(24,40) 
 \qbezier(24,40)(32,44)(32,48) 
 \qbezier(32,32)(32,36)(26,39) 
 \qbezier(22,41)(16,44)(16,48) 
 \qbezier(0,32)(0,40)(0,48)
 \qbezier(0,16)(0,20)(8,24) 
 \qbezier(8,24)(16,28)(16,32) 
 \qbezier(16,16)(16,20)(10,23) 
 \qbezier(6,25)(0,28)(0,32) 
 \qbezier(32,16)(32,24)(32,32)
 \qbezier(24,8)(24,4)(24,0) 
 \qbezier(24,8)(16,12)(16,16) 
 \qbezier(24,8)(32,12)(32,16) 
 \qbezier(0,0)(0,8)(0,16)
 \put(0,51){\makebox(0,0)[b]{\normalsize$b$}}
 \put(16,51){\makebox(0,0)[b]{\normalsize$a\triangle b$}}
 \put(30,51){\makebox(0,0)[bl]{\normalsize$(x\oline{*}b)\oline{*}(a\triangle b)$}}
 \put(0,-3){\makebox(0,0)[t]{\normalsize$x$}}
 \put(24,-3){\makebox(0,0)[t]{\normalsize$a\uline{*}x$}}
 \put(10,32){\makebox(0,0)[l]{\normalsize$x\oline{*}b$}}
 \put(10,16){\makebox(0,0)[l]{\normalsize$b\uline{*}x$}}
 \put(35,32){\makebox(0,0)[l]{\normalsize$(a\triangle b)\uline{*}(x\oline{*}b)$}}
 \put(35,16){\makebox(0,0)[l]{\normalsize$(a\uline{*}x)\triangle(b\uline{*}x)$}}
\end{picture}
\end{minipage}
\hfill
\begin{minipage}{38pt}
\begin{picture}(32,72)(0,-12)
 \qbezier(8,24)(8,30)(20,36) 
 \qbezier(20,36)(32,42)(32,48) 
 \qbezier(32,24)(32,30)(23,34.5) 
 \qbezier(17,37.5)(8,42)(8,48) 
 \qbezier(8,16)(8,20)(8,24) 
 \qbezier(8,16)(0,12)(0,8) 
 \qbezier(8,16)(16,12)(16,8) 
 \qbezier(0,0)(0,4)(0,8)
 \qbezier(16,0)(16,4)(16,8)
 \qbezier(32,0)(32,12)(32,24)
 \put(8,51){\makebox(0,0)[b]{\normalsize$x$}}
 \put(32,51){\makebox(0,0)[b]{\normalsize$a\oline{*}x$}}
 \put(0,-3){\makebox(0,0)[t]{\normalsize$b$}}
 \put(16,-3){\makebox(0,0)[t]{\normalsize$a\triangle b$}}
 \put(30,-3){\makebox(0,0)[tl]{\normalsize$x\uline{*}a$}}
 \put(5,24){\makebox(0,0)[r]{\normalsize$a$}}
\end{picture}
\end{minipage}
~$\overset{\text{R5}}{\leftrightarrow}$~
\begin{minipage}{96pt}
\begin{picture}(32,72)(0,-12)
 \qbezier(24,40)(24,44)(24,48) 
 \qbezier(24,40)(16,36)(16,32) 
 \qbezier(24,40)(32,36)(32,32) 
 \qbezier(0,32)(0,40)(0,48)
 \qbezier(0,16)(0,20)(8,24) 
 \qbezier(8,24)(16,28)(16,32) 
 \qbezier(16,16)(16,20)(10,23) 
 \qbezier(6,25)(0,28)(0,32) 
 \qbezier(32,16)(32,24)(32,32)
 \qbezier(16,0)(16,4)(24,8) 
 \qbezier(24,8)(32,12)(32,16) 
 \qbezier(32,0)(32,4)(26,7) 
 \qbezier(22,9)(16,12)(16,16) 
 \qbezier(0,0)(0,8)(0,16)
 \put(0,51){\makebox(0,0)[b]{\normalsize$x$}}
 \put(24,51){\makebox(0,0)[b]{\normalsize$a\oline{*}x$}}
 \put(0,-3){\makebox(0,0)[t]{\normalsize$b$}}
 \put(16,-3){\makebox(0,0)[t]{\normalsize$a\triangle b$}}
 \put(30,-3){\makebox(0,0)[tl]{\normalsize$(x\uline{*}b)\uline{*}(a\triangle b)$}}
 \put(10,32){\makebox(0,0)[l]{\normalsize$b\oline{*}x$}}
 \put(10,16){\makebox(0,0)[l]{\normalsize$x\uline{*}b$}}
 \put(35,32){\makebox(0,0)[l]{\normalsize$(a\oline{*}x)\triangle(b\oline{*}x)$}}
 \put(35,16){\makebox(0,0)[l]{\normalsize$(a\triangle b)\oline{*}(x\uline{*}b)$}}
\end{picture}
\end{minipage}\vspace{5mm}

\begin{minipage}{38pt}
\begin{picture}(32,72)(0,-12)
 \qbezier(0,40)(0,44)(0,48)
 \qbezier(16,40)(16,44)(16,48)
 \qbezier(8,32)(8,28)(8,24) 
 \qbezier(8,32)(0,36)(0,40) 
 \qbezier(8,32)(16,36)(16,40) 
 \qbezier(32,24)(32,36)(32,48)
 \qbezier(32,0)(32,6)(20,12) 
 \qbezier(20,12)(8,18)(8,24) 
 \qbezier(8,0)(8,6)(17,10.5) 
 \qbezier(23,13.5)(32,18)(32,24) 
 \put(0,51){\makebox(0,0)[b]{\normalsize$b$}}
 \put(16,51){\makebox(0,0)[b]{\normalsize$a\triangle b$}}
 \put(30,51){\makebox(0,0)[bl]{\normalsize$x\uline{*}a$}}
 \put(8,-3){\makebox(0,0)[t]{\normalsize$x$}}
 \put(32,-3){\makebox(0,0)[t]{\normalsize$a\oline{*}x$}}
 \put(5,24){\makebox(0,0)[r]{\normalsize$a$}}
\end{picture}
\end{minipage}
~$\overset{\text{R5}}{\leftrightarrow}$~
\begin{minipage}{96pt}
\begin{picture}(32,72)(0,-12)
 \qbezier(32,32)(32,36)(24,40) 
 \qbezier(24,40)(16,44)(16,48) 
 \qbezier(16,32)(16,36)(22,39) 
 \qbezier(26,41)(32,44)(32,48) 
 \qbezier(0,32)(0,40)(0,48)
 \qbezier(16,16)(16,20)(8,24) 
 \qbezier(8,24)(0,28)(0,32) 
 \qbezier(0,16)(0,20)(6,23) 
 \qbezier(10,25)(16,28)(16,32) 
 \qbezier(32,16)(32,24)(32,32)
 \qbezier(24,8)(24,4)(24,0) 
 \qbezier(24,8)(16,12)(16,16) 
 \qbezier(24,8)(32,12)(32,16) 
 \qbezier(0,0)(0,8)(0,16)
 \put(0,51){\makebox(0,0)[b]{\normalsize$b$}}
 \put(16,51){\makebox(0,0)[b]{\normalsize$a\triangle b$}}
 \put(30,51){\makebox(0,0)[bl]{\normalsize$(x\uline{*}b)\uline{*}(a\triangle b)$}}
 \put(0,-3){\makebox(0,0)[t]{\normalsize$x$}}
 \put(24,-3){\makebox(0,0)[t]{\normalsize$a\oline{*}x$}}
 \put(10,32){\makebox(0,0)[l]{\normalsize$x\uline{*}b$}}
 \put(10,16){\makebox(0,0)[l]{\normalsize$b\oline{*}x$}}
 \put(35,32){\makebox(0,0)[l]{\normalsize$(a\triangle b)\oline{*}(x\uline{*}b)$}}
 \put(35,16){\makebox(0,0)[l]{\normalsize$(a\oline{*}x)\triangle(b\oline{*}x)$}}
\end{picture}
\end{minipage}
\hfill
\begin{minipage}{38pt}
\begin{picture}(32,72)(0,-12)
 \qbezier(32,24)(32,30)(20,36) 
 \qbezier(20,36)(8,42)(8,48) 
 \qbezier(8,24)(8,30)(17,34.5) 
 \qbezier(23,37.5)(32,42)(32,48) 
 \qbezier(8,16)(8,20)(8,24) 
 \qbezier(8,16)(0,12)(0,8) 
 \qbezier(8,16)(16,12)(16,8) 
 \qbezier(0,0)(0,4)(0,8)
 \qbezier(16,0)(16,4)(16,8)
 \qbezier(32,0)(32,12)(32,24)
 \put(8,51){\makebox(0,0)[b]{\normalsize$x$}}
 \put(32,51){\makebox(0,0)[b]{\normalsize$a\uline{*}x$}}
 \put(0,-3){\makebox(0,0)[t]{\normalsize$b$}}
 \put(16,-3){\makebox(0,0)[t]{\normalsize$a\triangle b$}}
 \put(30,-3){\makebox(0,0)[tl]{\normalsize$x\oline{*}a$}}
 \put(5,24){\makebox(0,0)[r]{\normalsize$a$}}
\end{picture}
\end{minipage}
~$\overset{\text{R5}}{\leftrightarrow}$~
\begin{minipage}{96pt}
\begin{picture}(32,72)(0,-12)
 \qbezier(24,40)(24,44)(24,48) 
 \qbezier(24,40)(16,36)(16,32) 
 \qbezier(24,40)(32,36)(32,32) 
 \qbezier(0,32)(0,40)(0,48)
 \qbezier(16,16)(16,20)(8,24) 
 \qbezier(8,24)(0,28)(0,32) 
 \qbezier(0,16)(0,20)(6,23) 
 \qbezier(10,25)(16,28)(16,32) 
 \qbezier(32,16)(32,24)(32,32)
 \qbezier(32,0)(32,4)(24,8) 
 \qbezier(24,8)(16,12)(16,16) 
 \qbezier(16,0)(16,4)(22,7) 
 \qbezier(26,9)(32,12)(32,16) 
 \qbezier(0,0)(0,8)(0,16)
 \put(0,51){\makebox(0,0)[b]{\normalsize$x$}}
 \put(24,51){\makebox(0,0)[b]{\normalsize$a\uline{*}x$}}
 \put(0,-3){\makebox(0,0)[t]{\normalsize$b$}}
 \put(16,-3){\makebox(0,0)[t]{\normalsize$a\triangle b$}}
 \put(30,-3){\makebox(0,0)[tl]{\normalsize$(x\oline{*}b)\oline{*}(a\triangle b)$}}
 \put(10,32){\makebox(0,0)[l]{\normalsize$b\uline{*}x$}}
 \put(10,16){\makebox(0,0)[l]{\normalsize$x\oline{*}b$}}
 \put(35,32){\makebox(0,0)[l]{\normalsize$(a\uline{*}x)\triangle(b\uline{*}x)$}}
 \put(35,16){\makebox(0,0)[l]{\normalsize$(a\triangle b)\uline{*}(x\oline{*}b)$}}
\end{picture}
\end{minipage}\vspace{5mm}

\mbox{}\hfill
\begin{minipage}{32pt}
\begin{picture}(32,72)(0,-12)
 \qbezier(0,40)(0,44)(0,48)
 \qbezier(16,40)(16,44)(16,48)
 \qbezier(8,32)(8,28)(8,24) 
 \qbezier(8,32)(0,36)(0,40) 
 \qbezier(8,32)(16,36)(16,40) 
 \qbezier(32,24)(32,36)(32,48)
 \qbezier(20,12)(20,6)(20,0) 
 \qbezier(20,12)(8,18)(8,24) 
 \qbezier(20,12)(32,18)(32,24) 
 \put(0,51){\makebox(0,0)[b]{\normalsize$c$}}
 \put(16,51){\makebox(0,0)[b]{\normalsize$x$}}
 \put(32,51){\makebox(0,0)[b]{\normalsize$a\triangle b$}}
 \put(20,-3){\makebox(0,0)[t]{\normalsize$a$}}
 \put(5,24){\makebox(0,0)[r]{\normalsize$b$}}
\end{picture}
\end{minipage}
~$\overset{\text{R6}}{\leftrightarrow}$~
\begin{minipage}{32pt}
\begin{picture}(32,72)(0,-12)
 \qbezier(16,40)(16,44)(16,48)
 \qbezier(32,40)(32,44)(32,48)
 \qbezier(24,32)(24,28)(24,24) 
 \qbezier(24,32)(16,36)(16,40) 
 \qbezier(24,32)(32,36)(32,40) 
 \qbezier(0,24)(0,36)(0,48)
 \qbezier(12,12)(12,6)(12,0) 
 \qbezier(12,12)(0,18)(0,24) 
 \qbezier(12,12)(24,18)(24,24) 
 \put(0,51){\makebox(0,0)[b]{\normalsize$c$}}
 \put(16,51){\makebox(0,0)[b]{\normalsize$x$}}
 \put(25,51){\makebox(0,0)[bl]{\normalsize$(a\triangle c)\triangle x$}}
 \put(12,-3){\makebox(0,0)[t]{\normalsize$a$}}
 \put(27,24){\makebox(0,0)[l]{\normalsize$a\triangle c$}}
\end{picture}
\end{minipage}
\hfill
\begin{minipage}{32pt}
\begin{picture}(32,72)(0,-12)
 \qbezier(20,36)(20,42)(20,48) 
 \qbezier(20,36)(8,30)(8,24) 
 \qbezier(20,36)(32,30)(32,24) 
 \qbezier(32,0)(32,12)(32,24)
 \qbezier(8,16)(8,20)(8,24) 
 \qbezier(8,16)(0,12)(0,8) 
 \qbezier(8,16)(16,12)(16,8) 
 \qbezier(0,0)(0,4)(0,8)
 \qbezier(16,0)(16,4)(16,8)
 \put(20,51){\makebox(0,0)[b]{\normalsize$a$}}
 \put(0,-3){\makebox(0,0)[t]{\normalsize$c$}}
 \put(16,-3){\makebox(0,0)[t]{\normalsize$x$}}
 \put(32,-3){\makebox(0,0)[t]{\normalsize$a\triangle b$}}
 \put(5,24){\makebox(0,0)[r]{\normalsize$b$}}
\end{picture}
\end{minipage}
~$\overset{\text{R6}}{\leftrightarrow}$~
\begin{minipage}{32pt}
\begin{picture}(32,72)(0,-12)
 \qbezier(12,36)(12,42)(12,48) 
 \qbezier(12,36)(0,30)(0,24) 
 \qbezier(12,36)(24,30)(24,24) 
 \qbezier(0,0)(0,12)(0,24)
 \qbezier(24,16)(24,20)(24,24) 
 \qbezier(24,16)(16,12)(16,8) 
 \qbezier(24,16)(32,12)(32,8) 
 \qbezier(16,0)(16,4)(16,8)
 \qbezier(32,0)(32,4)(32,8)
 \put(12,51){\makebox(0,0)[b]{\normalsize$a$}}
 \put(0,-3){\makebox(0,0)[t]{\normalsize$c$}}
 \put(16,-3){\makebox(0,0)[t]{\normalsize$x$}}
 \put(25,-3){\makebox(0,0)[tl]{\normalsize$(a\triangle c)\triangle x$}}
 \put(27,24){\makebox(0,0)[l]{\normalsize$a\triangle c$}}
\end{picture}
\end{minipage}
\hfill\mbox{}\vspace{5mm}

\mbox{}\hfill
\begin{minipage}{24pt}
\begin{picture}(24,72)(0,-12)
 \qbezier(12,32)(12,28)(12,24) 
 \qbezier(12,32)(0,40)(0,48) 
 \qbezier(12,32)(24,40)(24,48) 
 \qbezier(12,16)(12,20)(12,24) 
 \qbezier(12,16)(0,8)(0,0) 
 \qbezier(12,16)(24,8)(24,0) 
 \put(0,51){\makebox(0,0)[b]{\normalsize$b$}}
 \put(24,51){\makebox(0,0)[b]{\normalsize$a\triangle b$}}
 \put(0,-3){\makebox(0,0)[t]{\normalsize$c$}}
 \put(24,-3){\makebox(0,0)[t]{\normalsize$x$}}
 \put(15,24){\makebox(0,0)[l]{\normalsize$a$}}
\end{picture}
\end{minipage}
~$\overset{\text{R6}}{\leftrightarrow}$~
\begin{minipage}{32pt}
\begin{picture}(32,72)(0,-12)
 \qbezier(8,36)(8,42)(8,48) 
 \qbezier(8,36)(0,32)(0,28) 
 \qbezier(8,36)(16,32)(16,28) 
 \qbezier(0,0)(0,14)(0,28)
 \qbezier(16,20)(16,24)(16,28)
 \qbezier(32,20)(32,34)(32,48)
 \qbezier(24,12)(24,6)(24,0) 
 \qbezier(24,12)(16,16)(16,20) 
 \qbezier(24,12)(32,16)(32,20) 
 \put(8,51){\makebox(0,0)[b]{\normalsize$b$}}
 \put(22,51){\makebox(0,0)[bl]{\normalsize$x\triangle(b\triangle c)$}}
 \put(0,-3){\makebox(0,0)[t]{\normalsize$c$}}
 \put(24,-3){\makebox(0,0)[t]{\normalsize$x$}}
 \put(11,24){\makebox(0,0)[l]{\normalsize$b\triangle c$}}
\end{picture}
\end{minipage}
\hfill
\begin{minipage}{24pt}
\begin{picture}(24,72)(0,-12)
 \qbezier(12,32)(12,28)(12,24) 
 \qbezier(12,32)(0,40)(0,48) 
 \qbezier(12,32)(24,40)(24,48) 
 \qbezier(12,16)(12,20)(12,24) 
 \qbezier(12,16)(0,8)(0,0) 
 \qbezier(12,16)(24,8)(24,0) 
 \put(0,51){\makebox(0,0)[b]{\normalsize$c$}}
 \put(24,51){\makebox(0,0)[b]{\normalsize$x$}}
 \put(0,-3){\makebox(0,0)[t]{\normalsize$b$}}
 \put(24,-3){\makebox(0,0)[t]{\normalsize$a\triangle b$}}
 \put(15,24){\makebox(0,0)[l]{\normalsize$a$}}
\end{picture}
\end{minipage}
~$\overset{\text{R6}}{\leftrightarrow}$~
\begin{minipage}{32pt}
\begin{picture}(32,72)(0,-12)
 \qbezier(24,36)(24,42)(24,48) 
 \qbezier(24,36)(16,32)(16,28) 
 \qbezier(24,36)(32,32)(32,28) 
 \qbezier(0,20)(0,34)(0,48)
 \qbezier(16,20)(16,24)(16,28)
 \qbezier(32,0)(32,14)(32,28)
 \qbezier(8,12)(8,6)(8,0) 
 \qbezier(8,12)(0,16)(0,20) 
 \qbezier(8,12)(16,16)(16,20) 
 \put(0,51){\makebox(0,0)[b]{\normalsize$c$}}
 \put(24,51){\makebox(0,0)[b]{\normalsize$x$}}
 \put(8,-3){\makebox(0,0)[t]{\normalsize$b$}}
 \put(22,-3){\makebox(0,0)[tl]{\normalsize$x\triangle(b\triangle c)$}}
 \put(11,24){\makebox(0,0)[l]{\normalsize$b\triangle c$}}
\end{picture}
\end{minipage}
\hfill\mbox{}\vspace{5mm}
\caption{Colored Reidemeister moves.}
\label{fig:coloredReidemeisterMove}
\end{figure}

\begin{definition}(cf. \cite{NelsonPelland13})
Let $X=\bigsqcup_{\lambda\in\Lambda}G_\lambda$ be a multiple conjugation biquandle and $Y$ an $X$-set.
Let $D$ be a diagram of an $S^1$-oriented handlebody-link $H$.
An \textit{$X_Y$-coloring} of $D$ is a map $C:\mathcal{R}(D)\cup\mathcal{SA}(D)\to Y\cup X$ satisfying that the restriction of $C$ on $\mathcal{SA}(D)$ is an $X$-coloring of $D$, $C(\mathcal{R}(D)) \subset Y$ and 
\begin{center}
\begin{picture}(66,40)(-5,0)
 \put(20,40){\vector(0,-1){40}}
 \put(21,30){\makebox(0,0){\normalsize$\rightarrow$}}
 \put(23,40){\makebox(0,0)[l]{\normalsize$a$}}
 \put(-5,14){\framebox(12,12){\normalsize$y$}}
 \put(33,14){\framebox(28,12){\normalsize$y*a$}}
\end{picture}
\end{center}
holds for adjacent regions.
We denote by $\operatorname{Col}_{X_Y}(D)$ the set of $X_Y$-colorings of $D$.
\end{definition}
The next theorem can be proven with the same argument as the one for Theorem~\ref{thm:coloring1}.
It shows that for any two diagrams $D$ and $D'$ of the same $S^1$-oriented handlebody-link, there exists a one-to-one correspondence between $\operatorname{Col}_{X_Y}(D)$ and $\operatorname{Col}_{X_Y}(D')$.

\begin{theorem}
Let $X=\bigsqcup_{\lambda\in\Lambda}G_\lambda$ be a multiple conjugation biquandle and $Y$ an $X$-set.
Let $D$ be a diagram of an $S^1$-oriented handlebody-link $H$.
Let $D'$ be a diagram obtained by applying one of the Y-oriented R1--R6 moves to the diagram $D$ once.
For an $X_Y$-coloring $C$ of $D$, there is a unique $X_Y$-coloring $C'$ of $D'$ which coincides with $C$ except the place where the move is applied.
\end{theorem}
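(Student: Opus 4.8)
The plan is to split an $X_Y$-coloring into its semi-arc part and its region part, handle the former by Theorem~\ref{thm:coloring1}, and treat the latter as a ``flat'' extension problem on the disk where the move is performed. First I would record that for every $a\in X$ the map $Y\to Y$, $y\mapsto y*a$, is a bijection: if $a\in G_\lambda$, then setting $b=a^{-1}$ in the axiom $y*(ab)=(y*a)*(b\oline{*}a)$ gives $y=(y*a)*(a^{-1}\oline{*}a)$, so $*(a^{-1}\oline{*}a)$ is a left inverse of $*a$; since $a^{-1}\oline{*}a$ lies in a single group $G_\mu$, the same identity applied to it shows $*(a^{-1}\oline{*}a)$ is itself injective, hence bijective, hence so is $*a$. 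This permits transporting a region label across a semi-arc in either direction along its normal. Now let $B$ be the disk in which the Y-oriented move R1--R6 is performed, so that $D$ and $D'$, together with their colorings, coincide outside $B$. Restricting $C$ to $\mathcal{SA}(D)$ yields an $X$-coloring, which by Theorem~\ref{thm:coloring1} extends to a unique $X$-coloring $\widetilde{C}$ of $D'$ agreeing with $C$ outside $B$.

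It remains to extend the region labels. Outside $B$ the regions of $D$ and $D'$ are canonically identified, so I put $C'=C$ there; for each of the finitely many regions of $D'$ meeting the interior of $B$, I assign a $Y$-label by transporting, via $\widetilde{C}$ and along normals, from a region whose label is already known. The point is that, with the $X$-coloring fixed in the background, the region condition is \emph{locally consistent}: the monodromy of label-transport around a crossing is trivial by the $X$-set axiom $(y*a)*(b\oline{*}a)=(y*b)*(a\uline{*}b)$, and around a trivalent vertex with incident edge labels $a,b,a^{-1}b\oline{*}a$ as in Definition~\ref{def:coloring} it is trivial because $b=a\,(a^{-1}b)$ in $G_\lambda$ and $y*\bigl(a\,(a^{-1}b)\bigr)=(y*a)*\bigl(a^{-1}b\oline{*}a\bigr)$ (with $y*e_\lambda=y$ used for the degenerate configurations). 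Since $B$ is a disk, hence simply connected, transport inside $B$ along a path is path-independent: any two paths differ by boundaries of faces, and the monodromy around each face---a crossing, a vertex, or a region created by the move---is trivial by the above. Hence the labels of the regions of $D'$ inside $B$ are forced by the labels on $\partial B$, which are prescribed by $C$; and those boundary labels are the same data for $D'$ as for $D$ (the diagrams agree near $\partial B$), so they are compatible with some labeling of the interior for the picture of $D'$, because the total monodromy around $\partial B$ is the identity, exactly as it is for $D$. This produces the required $C'$, and the same determinacy gives its uniqueness.

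Following the proof of Theorem~\ref{thm:coloring1}, the concrete way to carry this out is a finite case analysis over the six moves, adjoining a $Y$-label to each region in the colored diagrams of Figure~\ref{fig:coloredReidemeisterMove} and checking that every region surviving the move keeps its label while each new region receives a forced, consistent one. The cases R1--R3 contain no vertices and are routine, using only the crossing axiom together with $y*e_\lambda=y$ (for the monogon of R1) and $x\uline{*}x=x\oline{*}x$. The main obstacle is the bookkeeping for R4, R5 and R6: there crossings and vertices interact, several regions must be tracked simultaneously, and one must combine the region rule with the edge relabelings of Figure~\ref{fig:coloredReidemeisterMove}---in particular with $a\triangle b=b^{-1}a\oline{*}b$---and invoke both $X$-set axioms repeatedly to verify that all the resulting region labels agree.
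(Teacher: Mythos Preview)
Your argument is correct and follows essentially the same route the paper indicates: the paper's proof consists of the single sentence that the theorem ``can be proven with the same argument as the one for Theorem~\ref{thm:coloring1},'' i.e.\ handle the semi-arc labels by that theorem and then propagate the region labels, which is exactly what you do. Your monodromy/transport formulation, together with the preliminary observation that each $*a$ is a bijection on $Y$, is a clean way to package the region part and in fact supplies more detail than the paper itself gives.
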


\section{Prismatic chain complex} \label{sect:chaincomplex}

In this section, we define a chain complex called the \textit{prismatic chain complex} for multiple conjugation biquandles that contains aspects of both group and biquandle homology theories.
This is a counterpart, for biquandles, of the prismatic chain complex for multiple conjugation quandles introduced in \cite{CarterIshiiSaitoTanaka16}.

Let $X=\bigsqcup_{\lambda\in\Lambda}G_\lambda$ be a multiple conjugation biquandle, $Y$ an $X$-set.
Let $P_n(X)_Y$ be the free abelian group generated by the elements
\[ (y;x_{1,1},\ldots,x_{1,n_1};\ldots;x_{k,1},\ldots,x_{k,n_k})\in\bigcup_{n_1+\cdots+n_k=n}Y\times\prod_{i=1}^k\bigcup_{\lambda\in\Lambda}G_\lambda^{n_i} \]
if $n\geq0$, let $P_n(X)_Y=0$ otherwise.
We represent
\[ (y;x_{1,1},\ldots,x_{1,n_1};\ldots;x_{k,1},\ldots,x_{k,n_k}) \]
by the noncommutative multiplication form
\[ \langle y\rangle\langle x_{1,1},\ldots,x_{1,n_1}\rangle\cdots\langle x_{k,1},\ldots,x_{k,n_k}\rangle, \]
and assume the linearity with respect to the ``noncommutative multiplication'' as
\begin{align*}
&\langle y\rangle\langle\boldsymbol{x}_1\rangle\cdots(\alpha\langle\boldsymbol{x}_i\rangle+\alpha'\langle\boldsymbol{x}'_i\rangle)\cdots\langle\boldsymbol{x}_k\rangle \\
&=\alpha\langle y\rangle\langle\boldsymbol{x}_1\rangle\cdots\langle\boldsymbol{x}_i\rangle\cdots\langle\boldsymbol{x}_k\rangle
+\alpha'\langle y\rangle\langle\boldsymbol{x}_1\rangle\cdots\langle\boldsymbol{x}'_i\rangle\cdots\langle\boldsymbol{x}_k\rangle,
\end{align*}
where a bold symbol $\boldsymbol{x}_i$ stands for the sequence $x_{i,1},\ldots,x_{i,n_i}$.

We introduce notions for bold symbols:
We denote by $|\langle\boldsymbol{x}\rangle|$ the length $m$ of $\langle\boldsymbol{x}\rangle=\langle x_1,\ldots,x_m\rangle$, and set 
$|\langle y\rangle\langle\boldsymbol{x}_1\rangle\cdots\langle\boldsymbol{x}_k\rangle|
=|\langle\boldsymbol{x}_1\rangle|+\cdots+|\langle\boldsymbol{x}_k\rangle|$.
We denote by $\langle\boldsymbol{x}_{\widehat{j_1,\ldots,j_s}}\rangle$ the bracket
\[ \langle x_1,\ldots,x_{j_1-1},x_{j_1+1},\ldots,x_{j_2-1},x_{j_2+1},\ldots,x_{j_s-1},x_{j_s+1},\ldots,x_m\rangle, \]
and set $\langle\boldsymbol{x}_{i,\widehat{\boldsymbol{j}}}\rangle=\langle(\boldsymbol{x}_i)_{\widehat{\boldsymbol{j}}}\rangle$.
The operations $\uline{*}a$, $\oline{*}a$ and $a\cdot$ are applied on a sequence as follows:
\begin{align*}
&\langle\boldsymbol{x}\uline{*}a\rangle
=\langle x_1\uline{*}a,\ldots,x_m\uline{*}a\rangle,
&\langle\boldsymbol{x}\oline{*}a\rangle
=\langle x_1\oline{*}a,\ldots,x_m\oline{*}a\rangle, \mbox{ and} \\
&\langle a\boldsymbol{x}\rangle
=\langle ax_1,\ldots,ax_m\rangle.
\end{align*}
The operations $\uline{*}a,\oline{*}a$ are also applied on brackets as follows:
\begin{align*}
\langle y\rangle\langle\boldsymbol{x}_1\rangle
\cdots\langle\boldsymbol{x}_i\rangle\uline{*}a
\langle\boldsymbol{x}_{i+1}\rangle
\cdots\langle\boldsymbol{x}_k\rangle
&=\langle y*a\rangle\langle\boldsymbol{x}_1\uline{*}a\rangle
\cdots\langle\boldsymbol{x}_i\uline{*}a\rangle
\langle\boldsymbol{x}_{i+1}\rangle
\cdots\langle\boldsymbol{x}_k\rangle, \mbox{ and}\\
\langle y\rangle\langle\boldsymbol{x}_1\rangle
\cdots\langle\boldsymbol{x}_{i-1}\rangle
\oline{*}a\langle\boldsymbol{x}_i\rangle
\cdots\langle\boldsymbol{x}_k\rangle
&=\langle y\rangle\langle\boldsymbol{x}_1\rangle
\cdots\langle\boldsymbol{x}_{i-1}\rangle
\langle\boldsymbol{x}_i\oline{*}a\rangle
\cdots\langle\boldsymbol{x}_k\oline{*}a\rangle.
\end{align*}

We define a boundary operation $\partial_n:P_n(X)_Y\to P_{n-1}(X)_Y$ by
\[ \partial_n(
\langle y\rangle\langle\boldsymbol{x}_1\rangle
\cdots\langle\boldsymbol{x}_k\rangle)
=\sum_{i=1}^k
(-1)^{|\langle y\rangle\langle\boldsymbol{x}_1\rangle
\cdots\langle\boldsymbol{x}_{i-1}\rangle|}
\langle y\rangle\langle\boldsymbol{x}_1\rangle
\cdots\partial\langle\boldsymbol{x}_i\rangle
\cdots\langle\boldsymbol{x}_k\rangle, \]
where
$\partial(\boldsymbol{x})
=\uline{*}x_1\oline{*}x_1\langle x_1^{-1}\boldsymbol{x}_{\widehat{1}}\rangle
+\sum_{j=1}^{|\langle\boldsymbol{x}_{\widehat{j}}\rangle|}(-1)^j\langle\boldsymbol{x}_{\widehat{j}}\rangle$.
We set $\widetilde{\partial}\langle\boldsymbol{x}\rangle=\uline{*}x_1\oline{*}x_1\langle x_1^{-1}\boldsymbol{x}_{\widehat{1}}\rangle$
and $\widehat{\partial}\langle\boldsymbol{x}\rangle=\sum_{j=1}^{|\langle\boldsymbol{x}_{\widehat{j}}\rangle|}(-1)^j\langle\boldsymbol{x}_{\widehat{j}}\rangle$,
that is,
\begin{align*}
&\widetilde{\partial}\langle x_1,\ldots,x_m\rangle
=\uline{*}x_1\langle x_1^{-1}x_2\oline{*}x_1,\ldots,x_1^{-1}x_m\oline{*}x_1\rangle\oline{*}x_1, \mbox{ and} \\
&\widehat{\partial}\langle x_1,\ldots,x_m\rangle
=\sum_{i=1}^m(-1)^i\langle x_1,\ldots,x_{i-1},x_{i+1},\ldots,x_m\rangle.
\end{align*}
Then $\partial=\widetilde{\partial}+\widehat{\partial}$.
Putting
\begin{align*}
&\widetilde{\partial}_n(
\langle y\rangle\langle\boldsymbol{x}_1\rangle
\cdots\langle\boldsymbol{x}_k\rangle) \\
&=\sum_{i=1}^k
(-1)^{|\langle y\rangle\langle\boldsymbol{x}_1\rangle
\cdots\langle\boldsymbol{x}_{i-1}\rangle|}
\langle y\rangle\langle\boldsymbol{x}_1\rangle
\cdots\widetilde{\partial}\langle\boldsymbol{x}_i\rangle
\cdots\langle\boldsymbol{x}_k\rangle \\
&=\sum_{i=1}^k
(-1)^{|\langle y\rangle\langle\boldsymbol{x}_1\rangle
\cdots\langle\boldsymbol{x}_{i-1}\rangle|}
\langle y*x_{i,1}\rangle\langle\boldsymbol{x}_1\uline{*}x_{i,1}\rangle
\cdots\langle x_{i,1}^{-1}\boldsymbol{x}_{i,\widehat{1}}\oline{*}x_{i,1}\rangle
\cdots\langle\boldsymbol{x}_k\oline{*}x_{i,1}\rangle, \mbox{ and} \\
&\widehat{\partial}_n(
\langle y\rangle\langle\boldsymbol{x}_1\rangle
\cdots\langle\boldsymbol{x}_k\rangle) \\
&=\sum_{i=1}^k
(-1)^{|\langle y\rangle\langle\boldsymbol{x}_1\rangle
\cdots\langle\boldsymbol{x}_{i-1}\rangle|}
\langle y\rangle\langle\boldsymbol{x}_1\rangle
\cdots\widehat{\partial}\langle\boldsymbol{x}_i\rangle
\cdots\langle\boldsymbol{x}_k\rangle \\
&=\sum_{i=1}^k\sum_{j=1}^{|\langle\boldsymbol{x_i}\rangle|}
(-1)^{|\langle y\rangle\langle\boldsymbol{x}_1\rangle
\cdots\langle\boldsymbol{x}_{i-1}\rangle|+j}
\langle y\rangle\langle\boldsymbol{x}_1\rangle
\cdots\langle\boldsymbol{x}_{i,\widehat{j}}\rangle
\cdots\langle\boldsymbol{x}_k\rangle,
\end{align*}
we have $\partial_n=\widetilde{\partial}_n+\widehat{\partial}_n$.

\begin{example}
The boundary maps in $1$-, $2$- and $3$-dimensions are computed as follows:
\begin{align*}
\partial_1(\langle y\rangle\langle x_1\rangle)
&=\langle y*x_1\rangle-\langle y\rangle, \\
\partial_2(\langle y\rangle\langle x_1\rangle\langle x_2\rangle)
&=\langle y\rangle\partial\langle x_1\rangle\langle x_2\rangle
-\langle y\rangle\langle x_1\rangle\partial\langle x_2\rangle \\
&=\langle y*x_1\rangle\langle x_2\oline{*}x_1\rangle
-\langle y\rangle\langle x_2\rangle
-\langle y*x_2\rangle\langle x_1\uline{*}x_2\rangle
+\langle y\rangle\langle x_1\rangle, \\
\partial_2(\langle y\rangle\langle x_1,x_2\rangle)
&=\langle y*x_1\rangle\langle x_1^{-1}x_2\oline{*}x_1\rangle
-\langle y\rangle\langle x_2\rangle
+\langle y\rangle\langle x_1\rangle, \\
\partial_3(\langle y\rangle\langle x_1\rangle\langle x_2\rangle\langle x_3\rangle)
&=\langle y\rangle\partial\langle x_1\rangle\langle x_2\rangle\langle x_3\rangle
-\langle y\rangle\langle x_1\rangle\partial\langle x_2\rangle\langle x_3\rangle
+\langle y\rangle\langle x_1\rangle\langle x_2\rangle\partial\langle x_3\rangle \\
&=\langle y*x_1\rangle\langle x_2\oline{*}x_1\rangle\langle x_3\oline{*}x_1\rangle
-\langle y\rangle\langle x_2\rangle\langle x_3\rangle \\
&\hspace{5mm}
-\langle y*x_2\rangle\langle x_1\uline{*}x_2\rangle\langle x_3\oline{*}x_2\rangle
+\langle y\rangle\langle x_1\rangle\langle x_3\rangle \\
&\hspace{5mm}
+\langle y*x_3\rangle\langle x_1\uline{*}x_3\rangle\langle x_2\uline{*}x_3\rangle
-\langle y\rangle\langle x_1\rangle\langle x_2\rangle, \\
\partial_3(\langle y\rangle\langle x_1\rangle\langle x_2,x_3\rangle)
&=\langle y\rangle\partial\langle x_1\rangle\langle x_2,x_3\rangle
-\langle y\rangle\langle x_1\rangle\partial\langle x_2,x_3\rangle \\
&=\langle y*x_1\rangle\langle x_2\oline{*}x_1,x_3\oline{*}x_1\rangle
-\langle y\rangle\langle x_2,x_3\rangle \\
&\hspace{5mm}
-\langle y*x_2\rangle\langle x_1\uline{*}x_2\rangle\langle x_2^{-1}x_3\oline{*}x_2\rangle \\
&\hspace{5mm}
+\langle y\rangle\langle x_1\rangle\langle x_3\rangle
-\langle y\rangle\langle x_1\rangle\langle x_2\rangle, \\
\partial_3(\langle y\rangle\langle x_1,x_2\rangle\langle x_3\rangle)
&=\langle y\rangle\partial\langle x_1,x_2\rangle\langle x_3\rangle
+\langle y\rangle\langle x_1,x_2\rangle\partial\langle x_3\rangle \\
&=\langle y*x_1\rangle\langle x_1^{-1}x_2\oline{*}x_1\rangle\langle x_3\oline{*}x_1\rangle
-\langle y\rangle\langle x_2\rangle\langle x_3\rangle \\
&\hspace{5mm}
+\langle y\rangle\langle x_1\rangle\langle x_3\rangle
+\langle y*x_3\rangle\langle x_1\uline{*}x_3,x_2\uline{*}x_3\rangle
-\langle y\rangle\langle x_1,x_2\rangle, \mbox{ and} \\
\partial_3(\langle y\rangle\langle x_1,x_2,x_3\rangle)
&=\langle y*x_1\rangle\langle x_1^{-1}x_2\oline{*}x_1,x_1^{-1}x_3\oline{*}x_1\rangle \\
&\hspace{5mm}
-\langle y\rangle\langle x_2,x_3\rangle
+\langle y\rangle\langle x_1,x_3\rangle
-\langle y\rangle\langle x_1,x_2\rangle.
\end{align*}
\end{example}

\begin{figure}
\mbox{}\hfill
\begin{picture}(80,100)
 \put(20,40){\line(1,1){40}}
 \put(60,40){\line(-1,1){17}}
 \put(20,80){\line(1,-1){17}}
\thicklines
 \put(40,20){\line(-1,1){40}}
 \put(40,20){\line(1,1){40}}
 \put(40,100){\line(-1,-1){40}}
 \put(40,100){\line(1,-1){40}}
 \put(23,37){\vector(1,-1){0}}
 \put(23,83){\vector(1,1){0}}
 \put(63,43){\vector(1,1){0}}
 \put(63,77){\vector(1,-1){0}}
 \put(17,83){\makebox(0,0)[br]{$a$}}
 \put(63,83){\makebox(0,0)[bl]{$b\oline{*}a$}}
 \put(17,37){\makebox(0,0)[tr]{$b$}}
 \put(63,37){\makebox(0,0)[tl]{$a\uline{*}b$}}
 \put(40,5){\makebox(0,0){$\langle a\rangle\langle b\rangle$}}
\end{picture}
\begin{picture}(60,100)
 \put(30,60){\makebox(0,0){$\overset{\partial_2}{\to}$}}
\end{picture}
\begin{picture}(80,100)
\thicklines
 \put(38,22){\line(-1,1){36}}
 \put(42,22){\line(1,1){36}}
 \put(38,98){\line(-1,-1){36}}
 \put(42,98){\line(1,-1){36}}
 \put(23,37){\vector(1,-1){0}}
 \put(23,83){\vector(1,1){0}}
 \put(63,43){\vector(1,1){0}}
 \put(63,77){\vector(1,-1){0}}
 \put(17,83){\makebox(0,0)[br]{$\langle a\rangle$}}
 \put(63,83){\makebox(0,0)[bl]{$\langle b\oline{*}a\rangle$}}
 \put(17,37){\makebox(0,0)[tr]{$-\langle b\rangle$}}
 \put(63,37){\makebox(0,0)[tl]{$-\langle a\uline{*}b\rangle$}}
\end{picture}
\hfill\mbox{}\\
\mbox{}\hfill
\begin{picture}(80,100)
 \put(40,30){\line(0,1){16}}
 \put(40,46){\line(-2,1){20}}
 \put(40,46){\line(2,1){20}}
\thicklines
 \put(0,30){\line(3,4){40}}
 \put(0,30){\line(1,0){80}}
 \put(80,30){\line(-3,4){40}}
 \put(22,59){\vector(3,4){0}}
 \put(63,53){\vector(3,-4){0}}
 \put(43,30){\vector(1,0){0}}
 \put(17,59){\makebox(0,0)[br]{$a$}}
 \put(63,59){\makebox(0,0)[bl]{$a^{-1}b\oline{*}a$}}
 \put(40,27){\makebox(0,0)[t]{$b$}}
 \put(40,5){\makebox(0,0){$\langle a,b\rangle$}}
\end{picture}
\begin{picture}(60,100)
 \put(30,60){\makebox(0,0){$\overset{\partial_2}{\to}$}}
\end{picture}
\begin{picture}(80,100)
\thicklines
 \put(3,34){\line(3,4){35}}
 \put(4,30){\line(1,0){72}}
 \put(77,34){\line(-3,4){35}}
 \put(22,59){\vector(3,4){0}}
 \put(63,53){\vector(3,-4){0}}
 \put(43,30){\vector(1,0){0}}
 \put(17,59){\makebox(0,0)[br]{$\langle a\rangle$}}
 \put(63,59){\makebox(0,0)[bl]{$\langle a^{-1}b\oline{*}a\rangle$}}
 \put(40,27){\makebox(0,0)[t]{$-\langle b\rangle$}}
\end{picture}
\hfill\mbox{}
\caption{A geometric interpretation of the boundary operation.}
\label{fig:geominterpretation}
\end{figure}
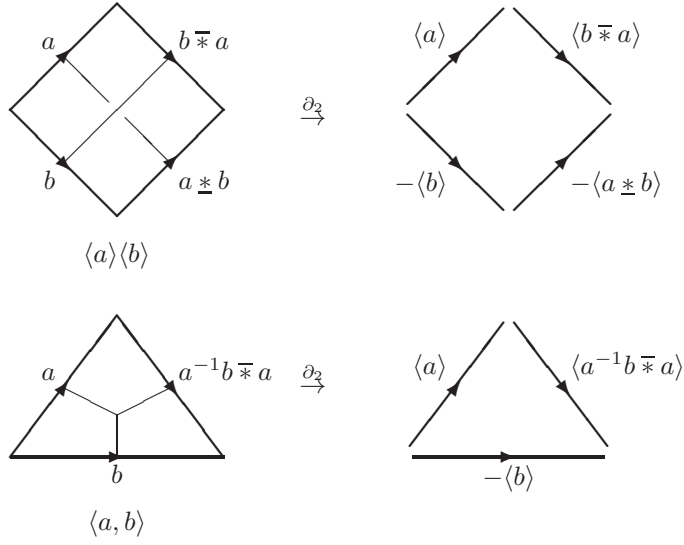

\begin{remark}
As in \cite{CarterIshiiSaitoTanaka16}, there is a geometric interpretation of the boundary operations.
For example, for
$\partial_2(\langle y\rangle\langle x_1\rangle\langle x_2\rangle)$ and $\partial_2(\langle y\rangle\langle x_1,x_2\rangle)$,
see Figure~\ref{fig:geominterpretation}.
\end{remark}

\begin{lemma}
It holds that
$\partial^{(i)}\circ\partial^{(j)}=\partial^{(j)}\circ\partial^{(i)}$ ($i\neq j$),
where $\partial^{(i)}=\widetilde{\partial}^{(i)}+\widehat{\partial}^{(i)}$,
\begin{align*}
&\widetilde{\partial}^{(i)}(
\langle y\rangle\langle\boldsymbol{x}_1\rangle
\cdots\langle\boldsymbol{x}_l\rangle)
=\langle y\rangle\langle\boldsymbol{x}_1\rangle
\cdots\widetilde{\partial}\langle\boldsymbol{x}_i\rangle
\cdots\langle\boldsymbol{x}_l\rangle, \mbox{ and} \\
&\widehat{\partial}^{(i)}(
\langle y\rangle\langle\boldsymbol{x}_1\rangle
\cdots\langle\boldsymbol{x}_l\rangle)
=\langle y\rangle\langle\boldsymbol{x}_1\rangle
\cdots\widehat{\partial}\langle\boldsymbol{x}_i\rangle
\cdots\langle\boldsymbol{x}_l\rangle.
\end{align*}
In particular, the notation
$\langle y\rangle\langle\boldsymbol{x}_1\rangle
\cdots\partial\langle\boldsymbol{x}_i\rangle
\cdots\partial\langle\boldsymbol{x}_j\rangle
\cdots\langle\boldsymbol{x}_l\rangle$
is well-defined.
\end{lemma}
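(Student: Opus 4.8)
The plan is to decompose $\partial^{(i)}=\widetilde{\partial}^{(i)}+\widehat{\partial}^{(i)}$ and to verify the four commutations $\widehat{\partial}^{(i)}\widehat{\partial}^{(j)}=\widehat{\partial}^{(j)}\widehat{\partial}^{(i)}$, $\widetilde{\partial}^{(i)}\widehat{\partial}^{(j)}=\widehat{\partial}^{(j)}\widetilde{\partial}^{(i)}$, $\widehat{\partial}^{(i)}\widetilde{\partial}^{(j)}=\widetilde{\partial}^{(j)}\widehat{\partial}^{(i)}$ and $\widetilde{\partial}^{(i)}\widetilde{\partial}^{(j)}=\widetilde{\partial}^{(j)}\widetilde{\partial}^{(i)}$ one at a time; since everything is symmetric in $i$ and $j$, one may assume $i<j$. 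The first is immediate, because $\widehat{\partial}^{(i)}$ only deletes an entry from the $i$-th bracket and leaves $\langle y\rangle$ and the remaining brackets alone, so it acts on data disjoint from that touched by $\widehat{\partial}^{(j)}$. For the two mixed relations, I would observe that $\widetilde{\partial}^{(i)}$ alters $\langle y\rangle$ and each bracket $\langle\boldsymbol{x}_m\rangle$ with $m\neq i$ only by applying one fixed operation to it ($y\mapsto y*x_{i,1}$; $\uline{*}x_{i,1}$ entry-wise when $m<i$; $\oline{*}x_{i,1}$ entry-wise when $m>i$) and replaces the $i$-th bracket by an expression depending only on that bracket; since $x_{i,1}$ lies inside the $i$-th bracket, and hence is not affected by $\widehat{\partial}^{(j)}$, and since deleting an entry from a bracket commutes with applying a fixed operation to every entry of that bracket, $\widetilde{\partial}^{(i)}$ commutes with $\widehat{\partial}^{(j)}$, and symmetrically $\widehat{\partial}^{(i)}$ with $\widetilde{\partial}^{(j)}$.

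The main work is the relation $\widetilde{\partial}^{(i)}\widetilde{\partial}^{(j)}=\widetilde{\partial}^{(j)}\widetilde{\partial}^{(i)}$. I would evaluate both composites on a generator $\langle y\rangle\langle\boldsymbol{x}_1\rangle\cdots\langle\boldsymbol{x}_k\rangle$ and compare them after grouping the brackets into the blocks $\langle y\rangle$; $\langle\boldsymbol{x}_1\rangle,\ldots,\langle\boldsymbol{x}_{i-1}\rangle$; $\langle\boldsymbol{x}_i\rangle$; $\langle\boldsymbol{x}_{i+1}\rangle,\ldots,\langle\boldsymbol{x}_{j-1}\rangle$; $\langle\boldsymbol{x}_j\rangle$; $\langle\boldsymbol{x}_{j+1}\rangle,\ldots,\langle\boldsymbol{x}_k\rangle$. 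Writing $a=x_{i,1}$ and $b=x_{j,1}$, the point is that after $\widetilde{\partial}^{(i)}$ the first entry of the $j$-th bracket has become $b\oline{*}a$, whereas after $\widetilde{\partial}^{(j)}$ the first entry of the $i$-th bracket has become $a\uline{*}b$, so the two orders of composition apply, for instance, ``$\uline{*}a$ then $\uline{*}(b\oline{*}a)$'' against ``$\uline{*}b$ then $\uline{*}(a\uline{*}b)$'' on the blocks before $i$, and analogous pairs on the other blocks. Block by block, the required equalities reduce (for a generic entry $x$) to
\begin{align*}
&(y*a)*(b\oline{*}a)=(y*b)*(a\uline{*}b), \\
&(x\uline{*}a)\uline{*}(b\oline{*}a)=(x\uline{*}b)\uline{*}(a\uline{*}b), \\
&(x\oline{*}a)\uline{*}(b\oline{*}a)=(x\uline{*}b)\oline{*}(a\uline{*}b), \\
&(x\oline{*}a)\oline{*}(b\oline{*}a)=(x\oline{*}b)\oline{*}(a\uline{*}b)
\end{align*}
on the blocks $\langle y\rangle$, before $i$, strictly between $i$ and $j$, and after $j$, respectively; the first equality is the second axiom for $X$-sets, and the remaining three are the three identities of (B3) with the substitutions $(x,y,z)=(x,b,a)$, $(x,y,z)=(x,b,a)$ and $(x,y,z)=(x,a,b)$.

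The step I expect to be the main obstacle is matching the $i$-th and $j$-th blocks themselves, since there the group operation of $G_\lambda$ (the conjugations by $a^{-1}$ and $b^{-1}$ produced by $\widetilde{\partial}$) is intertwined with the biquandle operations. Concretely, one must prove
\[ \bigl((a^{-1}x)\oline{*}a\bigr)\uline{*}(b\oline{*}a)=\bigl((a\uline{*}b)^{-1}(x\uline{*}b)\bigr)\oline{*}(a\uline{*}b) \]
for $x$ in the same group $G_\lambda$ as $a$, and
\[ \bigl((b\oline{*}a)^{-1}(x\oline{*}a)\bigr)\oline{*}(b\oline{*}a)=\bigl((b^{-1}x)\oline{*}b\bigr)\oline{*}(a\uline{*}b) \]
for $x$ in the same group as $b$. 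Here I would use that each map $\uline{*}c$ and $\oline{*}c$ is a group homomorphism on $G_\lambda$ to distribute these operations over the products, and then invoke the second identity of (B3) in the first equation (so that both sides become $\bigl((a\uline{*}b)\oline{*}(a\uline{*}b)\bigr)^{-1}\bigl((x\uline{*}b)\oline{*}(a\uline{*}b)\bigr)$) and the third identity of (B3) in the second (so that both sides become $\bigl((b\oline{*}b)\oline{*}(a\uline{*}b)\bigr)^{-1}\bigl((x\oline{*}b)\oline{*}(a\uline{*}b)\bigr)$). Combining this with the three easy commutations yields $\partial^{(i)}\partial^{(j)}=\partial^{(j)}\partial^{(i)}$, and hence the notation $\langle y\rangle\langle\boldsymbol{x}_1\rangle\cdots\partial\langle\boldsymbol{x}_i\rangle\cdots\partial\langle\boldsymbol{x}_j\rangle\cdots\langle\boldsymbol{x}_l\rangle$ of the statement is well-defined.
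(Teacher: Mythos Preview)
Your proposal is correct and follows essentially the same route as the paper: both proofs assume $i<j$, split $\partial^{(i)}\partial^{(j)}=\partial^{(j)}\partial^{(i)}$ into the four commutations among $\widetilde{\partial}$ and $\widehat{\partial}$, dispose of the three involving $\widehat{\partial}$ quickly, and reduce the identity $\widetilde{\partial}^{(i)}\widetilde{\partial}^{(j)}=\widetilde{\partial}^{(j)}\widetilde{\partial}^{(i)}$ block by block to the $X$-set axiom, the three (B3) relations, and the fact that $\uline{*}c,\oline{*}c$ are group homomorphisms on each $G_\lambda$. Your block-by-block analysis and the explicit substitutions into (B3) match precisely the transitions in the paper's long displayed computation; the paper is simply more terse about the $\widehat{\partial}$ cases (it says ``similarly'') while you spell out why they are easy.
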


\begin{proof}
We may assume that $i<j$.
Then we have 
$\widetilde{\partial}^{(i)}\circ\widetilde{\partial}^{(j)}
=\widetilde{\partial}^{(j)}\circ\widetilde{\partial}^{(i)}$ since
\begin{align*}
&\widetilde{\partial}^{(i)}\circ\widetilde{\partial}^{(j)}(
\langle y\rangle\langle\boldsymbol{x}_1\rangle
\cdots\langle\boldsymbol{x}_i\rangle
\cdots\langle\boldsymbol{x}_k\rangle
\cdots\langle\boldsymbol{x}_j\rangle
\cdots\langle\boldsymbol{x}_n\rangle)\\
&=\widetilde{\partial}^{(i)}(
\langle y*x_{j,1}\rangle
\langle\boldsymbol{x}_1\uline{*}x_{j,1}\rangle
\cdots\langle\boldsymbol{x}_i\uline{*}x_{j,1}\rangle
\cdots\langle\boldsymbol{x}_k\uline{*}x_{j,1}\rangle
\cdots\langle x_{j,1}^{-1}\boldsymbol{x}_{j,\widehat{1}}\oline{*}x_{j,1}\rangle
\cdots\langle\boldsymbol{x}_n\oline{*}x_{j,1}\rangle) \\
&=
\langle(y*x_{j,1})*(x_{i,1}\uline{*}x_{j,1})\rangle
\langle(\boldsymbol{x}_1\uline{*}x_{j,1})\uline{*}(x_{i,1}\uline{*}x_{j,1})\rangle \\
&\hspace{1em}
\cdots\langle(x_{i,1}\uline{*}x_{j,1})^{-1}(\boldsymbol{x}_{i,\widehat{1}}\uline{*}x_{j,1})\oline{*}(x_{i,1}\uline{*}x_{j,1})\rangle
\cdots\langle(\boldsymbol{x}_k\uline{*}x_{j,1})\oline{*}(x_{i,1}\uline{*}x_{j,1})\rangle \\
&\hspace{1em}
\cdots\langle(x_{j,1}^{-1}\boldsymbol{x}_{j,\widehat{1}}\oline{*}x_{j,1})\oline{*}(x_{i,1}\uline{*}x_{j,1})\rangle
\cdots\langle(\boldsymbol{x}_n\oline{*}x_{j,1})\oline{*}(x_{i,1}\uline{*}x_{j,1})\rangle \\
&=\langle(y*x_{i,1})*(x_{j,1}\oline{*}x_{i,1})\rangle
\langle(\boldsymbol{x}_1\uline{*}x_{i,1})\uline{*}(x_{j,1}\oline{*}x_{i,1})\rangle \\
&\hspace{1em}
\cdots\langle(x_{i,1}^{-1}\boldsymbol{x}_{i,\widehat{1}}\uline{*}x_{j,1})\oline{*}(x_{i,1}\uline{*}x_{j,1})\rangle
\cdots\langle(\boldsymbol{x}_k\uline{*}x_{j,1})\oline{*}(x_{i,1}\uline{*}x_{j,1})\rangle \\
&\hspace{1em}
\cdots\langle(x_{j,1}^{-1}\boldsymbol{x}_{j,\widehat{1}}\oline{*}x_{i,1})\oline{*}(x_{j,1}\oline{*}x_{i,1})\rangle
\cdots\langle(\boldsymbol{x}_n\oline{*}x_{i,1})\oline{*}(x_{j,1}\oline{*}x_{i,1})\rangle \\
&=\langle(y*x_{i,1})*(x_{j,1}\oline{*}x_{i,1})\rangle
\langle(\boldsymbol{x}_1\uline{*}x_{i,1})\uline{*}(x_{j,1}\oline{*}x_{i,1})\rangle \\
&\hspace{1em}
\cdots\langle(x_{i,1}^{-1}\boldsymbol{x}_{i,\widehat{1}}\oline{*}x_{i,1})\uline{*}(x_{j,1}\oline{*}x_{i,1})\rangle
\cdots\langle(\boldsymbol{x}_k\oline{*}x_{i,1})\uline{*}(x_{j,1}\oline{*}x_{i,1})\rangle \\
&\hspace{1em}
\cdots\langle(x_{j,1}\oline{*}x_{i,1})^{-1}(\boldsymbol{x}_{j,\widehat{1}}\oline{*}x_{i,1})\oline{*}(x_{j,1}\oline{*}x_{i,1})\rangle
\cdots\langle(\boldsymbol{x}_n\oline{*}x_{i,1})\oline{*}(x_{j,1}\oline{*}x_{i,1})\rangle \\
&=\widetilde{\partial}^{(j)}(
\langle y*x_{i,1}\rangle
\langle\boldsymbol{x}_1\uline{*}x_{i,1}\rangle
\cdots\langle x_{i,1}^{-1}\boldsymbol{x}_{i,\widehat{1}}\oline{*}x_{i,1}\rangle
\cdots\langle\boldsymbol{x}_k\oline{*}x_{i,1}\rangle
\cdots\langle\boldsymbol{x}_j\oline{*}x_{i,1}\rangle
\cdots\langle\boldsymbol{x}_n\oline{*}x_{i,1}\rangle) \\
&=\widetilde{\partial}^{(j)}\circ\widetilde{\partial}^{(i)}(
\langle y\rangle\langle\boldsymbol{x}_1\rangle
\cdots\langle\boldsymbol{x}_i\rangle
\cdots\langle\boldsymbol{x}_k\rangle
\cdots\langle\boldsymbol{x}_j\rangle
\cdots\langle\boldsymbol{x}_n\rangle).
\end{align*}

Similarly, we have
$\widetilde{\partial}^{(i)}\circ\widehat{\partial}^{(j)}
=\widehat{\partial}^{(j)}\circ\widetilde{\partial}^{(i)}$,
$\widehat{\partial}^{(i)}\circ\widetilde{\partial}^{(j)}
=\widetilde{\partial}^{(j)}\circ\widehat{\partial}^{(i)}$,
$\widehat{\partial}^{(i)}\circ\widehat{\partial}^{(j)}
=\widehat{\partial}^{(j)}\circ\widehat{\partial}^{(i)}$.
Therefore we have  
\begin{align*}
\partial^{(i)}\circ\partial^{(j)}&=\widetilde{\partial}^{(i)}\circ \widetilde{\partial}^{(j)}+\widetilde{\partial}^{(i)}\circ\widehat{\partial}^{(j)} +\widehat{\partial}^{(i)} \circ \widetilde{\partial}^{(j)} +\widehat{\partial}^{(i)} \circ \widehat{\partial}^{(j)} \\
&=\widetilde{\partial}^{(j)}\circ \widetilde{\partial}^{(i)}+\widehat{\partial}^{(j)}\circ \widetilde{\partial}^{(i)} +\widetilde{\partial}^{(j)} \circ \widehat{\partial}^{(i)} +\widehat{\partial}^{(j)} \circ \widehat{\partial}^{(i)} \\
&=\partial^{(j)}\circ\partial^{(i)}.
\end{align*}
\end{proof}

\begin{proposition}
$P_*(X)_Y=(P_n(X)_Y,\partial_n)$ is a chain complex.
\end{proposition}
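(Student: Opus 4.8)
The assertion to prove is $\partial_{n-1}\circ\partial_n=0$ for every $n$, and the plan is to reduce it, by a sign count, to the vanishing of the boundary squared ``on a single bracket''. Write $\partial_n=\sum_{i=1}^{k}(-1)^{\epsilon_i}\partial^{(i)}$ with $\epsilon_i=|\langle y\rangle\langle\boldsymbol{x}_1\rangle\cdots\langle\boldsymbol{x}_{i-1}\rangle|$ and $\partial^{(i)}=\widetilde\partial^{(i)}+\widehat\partial^{(i)}$ as in the preceding Lemma. Both $\widetilde\partial$ and $\widehat\partial$ shorten a bracket by exactly one entry, so after applying $\partial^{(i)}$ the integer $\epsilon_j$ is unchanged for $j\le i$ and drops by $1$ for $j>i$. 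Expanding $\partial_{n-1}\circ\partial_n$ and pairing the term ``bracket $i$, then bracket $j$'' with ``bracket $j$, then bracket $i$'' (for $i\ne j$), this shift contributes an extra factor $-1$ between the two orders; since $\partial^{(i)}\partial^{(j)}=\partial^{(j)}\partial^{(i)}$ by the Lemma, the off-diagonal terms cancel in pairs and one is left with $\partial_{n-1}\circ\partial_n=\sum_{i}\partial^{(i)}\circ\partial^{(i)}$. It thus suffices to prove $\partial^{(i)}\circ\partial^{(i)}=0$ for each $i$.

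For the diagonal term I would view the $i$-th bracket as equipped with face operators $d_0,d_1,\dots,d_{n_i}$, where $d_0=\widetilde\partial^{(i)}$ absorbs the first entry $x_{i,1}$ into $\langle y\rangle$ and the left-hand brackets (via $y\mapsto y*x_{i,1}$ and $\uline{*}x_{i,1}$) while twisting the right-hand brackets by $\oline{*}x_{i,1}$, and $d_j$ for $1\le j\le n_i$ merely deletes the $j$-th entry and carries no decoration. Then $\partial^{(i)}=\sum_{j=0}^{n_i}(-1)^jd_j$, so $(\partial^{(i)})^2=0$ will follow from the standard cancellation once the identities $d_pd_q=d_{q-1}d_p$ $(p<q)$ are verified. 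For $1\le p<q$ both maps are bare deletions and both sides equal ``delete the entries in positions $p$ and $q$''. For $p=0$ and $q\ge2$, $d_q$ fixes the first entry while $d_{q-1}$ applied to $d_0(\cdot)$ removes exactly the entry produced from $x_{i,q}$, so again the identity is immediate.

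The single substantial case is $d_0d_1=d_0d_0$, i.e.\ that ``absorb $x_{i,1}$, then absorb $x_{i,1}^{-1}x_{i,2}\oline{*}x_{i,1}$'' equals ``delete $x_{i,1}$, then absorb $x_{i,2}$'', and this is where the algebra enters. Setting $z_\ell=x_{i,1}^{-1}x_{i,\ell}\oline{*}x_{i,1}$, one checks $z_2^{-1}z_\ell\oline{*}z_2=x_{i,2}^{-1}x_{i,\ell}\oline{*}x_{i,2}$ from the group-homomorphism property of $\oline{*}x_{i,1}$ together with the MCB identity $x\oline{*}ab=(x\oline{*}a)\oline{*}(b\oline{*}a)$; and the decorations agree because $(w\uline{*}x_{i,1})\uline{*}z_2=w\uline{*}x_{i,2}$, $(w\oline{*}x_{i,1})\oline{*}z_2=w\oline{*}x_{i,2}$ and $(y*x_{i,1})*z_2=y*x_{i,2}$, which are exactly the MCB identities $x\uline{*}ab=(x\uline{*}a)\uline{*}(b\oline{*}a)$ and $x\oline{*}ab=(x\oline{*}a)\oline{*}(b\oline{*}a)$ and the $X$-set axiom $y*(ab)=(y*a)*(b\oline{*}a)$, all applied with $a=x_{i,1}$ and $b=x_{i,1}^{-1}x_{i,2}$. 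Granting $d_pd_q=d_{q-1}d_p$ for all $p<q$, the formal computation $(\partial^{(i)})^2=\sum_{p,q}(-1)^{p+q}d_pd_q=0$ finishes the proof. The main obstacle is precisely this identity $d_0d_1=d_0d_0$: it is the only place where the group structure of the $G_\lambda$, the homomorphism property of $\oline{*}$, the multiplicativity axioms of the multiple conjugation biquandle, and the $X$-set axiom all have to be invoked simultaneously; everything else is sign bookkeeping together with the already-established commutation $\partial^{(i)}\partial^{(j)}=\partial^{(j)}\partial^{(i)}$.
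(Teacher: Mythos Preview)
Your proof is correct and follows essentially the same strategy as the paper: reduce to a single bracket via the commutation Lemma and a sign count, then verify $\partial\circ\partial=0$ on one bracket. The only difference is packaging: the paper computes the four pieces $\widetilde\partial\widetilde\partial$, $\widetilde\partial\widehat\partial$, $\widehat\partial\widetilde\partial$, $\widehat\partial\widehat\partial$ directly and shows the first three sum to zero while the last vanishes, whereas you recast the same computation as the presimplicial identities $d_pd_q=d_{q-1}d_p$ for face maps $d_0=\widetilde\partial$ and $d_j=$ deletion; your key case $d_0d_1=d_0d_0$ is exactly the paper's computation of $\widetilde\partial\circ\widetilde\partial$, and the remaining identities are the paper's $\widetilde\partial\widehat\partial+\widehat\partial\widetilde\partial$ and $\widehat\partial\widehat\partial$ cancellations, so the algebraic content is identical.
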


\begin{proof}
We have 
$\widetilde{\partial}\circ\widetilde{\partial}
+\widetilde{\partial}\circ\widehat{\partial}
+\widehat{\partial}\circ\widetilde{\partial}=0$
since
\begin{align*}
&\langle y\rangle\langle\boldsymbol{x}_1\rangle
\cdots(\widetilde{\partial}\circ\widehat{\partial})\langle\boldsymbol{a}\rangle
\cdots\langle\boldsymbol{x}_n\rangle \\
&=\sum_{j=1}^m(-1)^j\langle y\rangle
\langle\boldsymbol{x}_1\rangle
\cdots\widetilde{\partial}\langle\boldsymbol{a}_{\widehat{j}}\rangle
\cdots\langle\boldsymbol{x}_n\rangle \\
&=-\langle y*a_2\rangle
\langle\boldsymbol{x}_1\uline{*}a_2\rangle
\cdots\langle a_2^{-1}\boldsymbol{a}_{\widehat{1,2}}\oline{*}a_2\rangle
\cdots\langle\boldsymbol{x}_n\oline{*}a_2\rangle \\
&\hspace{1em}+\sum_{j=2}^m(-1)^j\langle y*a_1\rangle
\langle\boldsymbol{x}_1\uline{*}a_1\rangle
\cdots\langle a_1^{-1}\boldsymbol{a}_{\widehat{1,j}}\oline{*}a_1\rangle
\cdots\langle\boldsymbol{x}_n\oline{*}a_1\rangle, \\
&\langle y\rangle\langle\boldsymbol{x}_1\rangle
\cdots(\widehat{\partial}\circ\widetilde{\partial})\langle\boldsymbol{a}\rangle
\cdots\langle\boldsymbol{x}_n\rangle \\
&=\langle y*a_1\rangle
\langle\boldsymbol{x}_1\uline{*}a_1\rangle
\cdots\widehat{\partial}\langle a_1^{-1}\boldsymbol{a}_{\widehat{1}}\oline{*}a_1\rangle
\cdots\langle\boldsymbol{x}_n\oline{*}a_1\rangle \\
&=\sum_{j=2}^m(-1)^{j-1}
\langle y*a_1\rangle\langle\boldsymbol{x}_1\uline{*}a_1\rangle
\cdots\langle a_1^{-1}\boldsymbol{a}_{\widehat{1,j}}\oline{*}a_1\rangle
\cdots\langle\boldsymbol{x}_n\oline{*}a_1\rangle, \mbox{ and} \\
&\langle y\rangle\langle\boldsymbol{x}_1\rangle
\cdots(\widetilde{\partial}\circ\widetilde{\partial})\langle\boldsymbol{a}\rangle
\cdots\langle\boldsymbol{x}_n\rangle \\
&=\langle y*a_1\rangle\langle\boldsymbol{x}_1\uline{*}a_1\rangle
\cdots\widetilde{\partial}\langle a_1^{-1}\boldsymbol{a}_{\widehat{1}}\oline{*}a_1\rangle
\cdots\langle\boldsymbol{x}_n\oline{*}a_1\rangle) \\
&=\langle(y*a_1)*(a_1^{-1}a_2\oline{*}a_1)\rangle
\langle(\boldsymbol{x}_1\uline{*}a_1)\uline{*}(a_1^{-1}a_2\oline{*}a_1)\rangle \\
&\hspace{1em}\cdots\langle(a_1^{-1}a_2\oline{*}a_1)^{-1}(a_1^{-1}\boldsymbol{a}_{\widehat{1,2}}\oline{*}a_1)\oline{*}(a_1^{-1}a_2\oline{*}a_1)\rangle \\
&\hspace{1em}\cdots\langle(\boldsymbol{x}_n\oline{*}a_1)\oline{*}(a_1^{-1}a_2\oline{*}a_1)\rangle \\
&=\langle y*a_2\rangle
\langle\boldsymbol{x}_1\uline{*}a_2\rangle
\cdots\langle(a_2^{-1}\boldsymbol{a}_{\widehat{1,2}}\oline{*}a_1)\oline{*}(a_1^{-1}a_2\oline{*}a_1)\rangle
\cdots\langle\boldsymbol{x}_n\oline{*}a_2\rangle \\
&=\langle y*a_2\rangle
\langle\boldsymbol{x}_1\uline{*}a_2\rangle
\cdots\langle a_2^{-1}\boldsymbol{a}_{\widehat{1,2}}\oline{*}a_2\rangle
\cdots\langle\boldsymbol{x}_n\oline{*}a_2\rangle.
\end{align*}
Besides, we have $\widehat{\partial}\circ\widehat{\partial}=0$ since 
\begin{align*}
&\langle y\rangle\langle\boldsymbol{x}_1\rangle
\cdots(\widehat{\partial}\circ\widehat{\partial})\langle\boldsymbol{a}\rangle
\cdots\langle\boldsymbol{x}_n\rangle \\
&=\sum_{i=1}^m(-1)^i
\langle y\rangle\langle\boldsymbol{x}_1\rangle
\cdots\widehat{\partial}\langle\boldsymbol{a}_{\widehat{i}}\rangle
\cdots\langle\boldsymbol{x}_n\rangle \\
&=\sum_{i<j}(-1)^{i+j-1}\langle y\rangle
\langle\boldsymbol{x}_1\rangle
\cdots\langle\boldsymbol{a}_{\widehat{i,j}}\rangle
\cdots\langle\boldsymbol{x}_n\rangle
+\sum_{i>j}(-1)^{i+j}
\langle y\rangle\langle\boldsymbol{x}_1\rangle
\cdots\langle\boldsymbol{a}_{\widehat{j,i}}\rangle
\cdots\langle\boldsymbol{x}_n\rangle \\
&=0.
\end{align*}
Thus $\partial\circ\partial=0$ holds.
Therefore
\begin{align*}
&(\partial_{n-1}\circ\partial_n)(
\langle y\rangle\langle\boldsymbol{x}_1\rangle
\cdots\langle\boldsymbol{x}_l\rangle) \\
&=\sum_{i=1}^l
(-1)^{|\langle y\rangle\langle\boldsymbol{x}_1\rangle
\cdots\langle\boldsymbol{x}_{i-1}\rangle|}
\partial_{n-1}(\langle y\rangle\langle\boldsymbol{x}_1\rangle
\cdots\partial\langle\boldsymbol{x}_i\rangle
\cdots\langle\boldsymbol{x}_l\rangle) \\
&=\sum_{i<j}
(-1)^{|\langle y\rangle\langle\boldsymbol{x}_1\rangle
\cdots\langle\boldsymbol{x}_{i-1}\rangle|
+|\langle y\rangle\langle\boldsymbol{x}_1\rangle
\cdots\partial\langle\boldsymbol{x}_i\rangle
\cdots\langle\boldsymbol{x}_{j-1}\rangle|}
\langle y\rangle\langle\boldsymbol{x}_1\rangle
\cdots\partial\langle\boldsymbol{x}_i\rangle
\cdots\partial\langle\boldsymbol{x}_j\rangle
\cdots\langle\boldsymbol{x}_l\rangle \\
&\hspace{1em}+\sum_{i}
(-1)^{2|\langle y\rangle\langle\boldsymbol{x}_1\rangle
\cdots\langle\boldsymbol{x}_{i-1}\rangle|}
\langle y\rangle\langle\boldsymbol{x}_1\rangle
\cdots\partial\circ\partial\langle\boldsymbol{x}_j\rangle
\cdots\langle\boldsymbol{x}_l\rangle \\
&\hspace{1em}+\sum_{i>j}
(-1)^{|\langle y\rangle\langle\boldsymbol{x}_1\rangle
\cdots\langle\boldsymbol{x}_{i-1}\rangle|
+|\langle y\rangle\langle\boldsymbol{x}_1\rangle
\cdots\langle\boldsymbol{x}_{j-1}\rangle|}
\langle y\rangle\langle\boldsymbol{x}_1\rangle
\cdots\partial\langle\boldsymbol{x}_j\rangle
\cdots\partial\langle\boldsymbol{x}_i\rangle
\cdots\langle\boldsymbol{x}_l\rangle \\
&=0.
\end{align*}
\end{proof}

\section{Homology groups of multiple conjugation biquandles} \label{sect:degenerate}

In this section, we first define a degenerate subcomplex of the prismatic chain complex for multiple conjugation biquandles that is analogous to the subcomplex of the degenerate subcomplex for multiple conjugation quandles in \cite{CarterIshiiSaitoTanaka16}.

\begin{notation}
\begin{itemize}
\item
For any $s,t,k\in\mathbb{N}$ with $k\leq s$, let
\begin{align*}
&M(s,t)=\{\mu:\{1,\ldots,s\}\to\{1,\ldots,t\}\,|\,i<j\Rightarrow\mu(i)<\mu(j)\}, \\
&M(s,t,\widehat{k})=\{\mu:\{1,\ldots,s\}\setminus\{k\}\to\{1,\cdots,t\}\,|\,i<j\Rightarrow\mu(i)<\mu(j)\}, \\
&M(s,t,k)=\{\mu:\{1,\cdots,s\}\to\{1,\ldots,t\}\,|\,\text{$\mu(k)=\mu(k+1)$ and} \\
&\hspace{5cm}\text{$i<j\Rightarrow\mu(i)<\mu(j)$ if $(i,j)\neq(k,k+1)$}\}.
\end{align*}
\item For any map $\mu \in M(s,t)$ and any $j\in\mathbb{Z}$, we define $\lfloor j;\mu\rfloor \in\{0, 1,\ldots,s\}$ as follows.
\begin{align}
\lfloor j;\mu\rfloor &=
\left\{
\begin{array}{ll}
0 &  (j < \mu(1)), \\
k \mbox{ satisfying $\mu(k) \le j < \mu(k+1)$} & (\mu(1) \le j < \mu(s)),\\
s &  (\mu(s) \le j).
\end{array}
\right. \nonumber
\end{align}
\end{itemize}

We omit the map $\mu$ as $\lfloor j\rfloor=\lfloor j;\mu\rfloor$ unless it causes confusion.
\end{notation}

\begin{notation}
For
$\langle\boldsymbol{a}\rangle=\langle a_1,\ldots,a_s\rangle$ and
$\langle\boldsymbol{b}\rangle=\langle b_1,\ldots,b_t\rangle$
($a_i,b_j\in G_\lambda$), let
\begin{itemize}
\item
$\langle\langle\boldsymbol{a}\rangle\langle\boldsymbol{b}\rangle\rangle
=\displaystyle\sum_{\mu\in M(s,s+t)}
\langle\langle\boldsymbol{a}\rangle\langle\boldsymbol{b}\rangle\rangle_\mu$,
and
\item
$\langle\langle\boldsymbol{a}\rangle\langle\boldsymbol{b}\rangle\rangle_\mu
=(-1)^{\sum_{k=1}^s(\mu(k)-k)}
\langle
a_{\lfloor 1;\mu\rfloor}b_{1-\lfloor 1; \mu\rfloor},
\ldots,
a_{\lfloor s+t; \mu \rfloor}b_{s+t-\lfloor s+t;\mu \rfloor}
\rangle$, \\
where $a_0=b_0=e_{\lambda}$.

\end{itemize}
\end{notation}

\begin{example}
\begin{itemize}
\item
$\langle\langle a\rangle\langle b\rangle\rangle$
represents a chain with two terms as follows:
\[ \langle\langle a\rangle\langle b\rangle\rangle
=\langle a,ab\rangle-\langle b,ab \rangle. \]
\item
$\langle\langle a_1,a_2\rangle\langle b\rangle\rangle$
represents a chain with three terms as follows:
\[ \langle\langle a_1,a_2\rangle\langle b\rangle\rangle
=\langle a_1,a_2,a_2b\rangle-\langle a_1,a_1b,a_2b \rangle+\langle b,a_1b,a_2b \rangle. \]
\item
$\langle\langle a_1,a_2\rangle\langle b_1,b_2\rangle\rangle$
represents a chain with six terms as follows:
\begin{align*}
\langle\langle a_1,a_2\rangle\langle b_1,b_2\rangle\rangle
&=\langle a_1,a_2,a_2b_1,a_2b_2\rangle
-\langle a_1,a_1b_1,a_2b_1,a_2b_2\rangle \\
&\hspace{1em}
+\langle b_1,a_1b_1,a_2b_1,a_2b_2\rangle
+\langle a_1,a_1b_1,a_1b_2,a_2b_2\rangle \\
&\hspace{1em}
-\langle b_1,a_1b_1,a_1b_2,a_2b_2\rangle
+\langle b_1,b_2,a_1b_2,a_2b_2\rangle.
\end{align*}
Refer to the following diagram which shows
$\langle\langle a_1,a_2\rangle\langle b_1,b_2\rangle\rangle$:
\[ \small\begin{array}{c@{\,}c@{\,}c@{\,}c@{\,}c}
&& \langle b_1,a_1,a_2,b_2\rangle \\
& \swarrow\hspace{-1em}\nearrow && \searrow\hspace{-1em}\nwarrow \\
\langle a_1,a_2,b_1,b_2\rangle\leftrightarrow-\langle a_1,b_1,a_2,b_2\rangle &&&& -\langle b_1,a_1,b_2,a_2\rangle\leftrightarrow\langle b_1,b_2,a_1,a_2\rangle. \\
& \searrow\hspace{-1em}\nwarrow && \swarrow\hspace{-1em}\nearrow \\
&& \langle a_1,b_1,b_2,a_2\rangle
\end{array} \]
\end{itemize}
\end{example}

\begin{definition}
Let $D_n(X)_Y$ be the submodule of $P_n(X)_Y$ generated by elements of the form
$\langle y\rangle\langle\boldsymbol{a}_1\rangle\cdots
\langle\boldsymbol{a}\rangle\langle\boldsymbol{b}\rangle
\cdots\langle\boldsymbol{a}_k\rangle
-\langle y\rangle\langle\boldsymbol{a}_1\rangle\cdots
\langle\langle\boldsymbol{a}\rangle\langle\boldsymbol{b}\rangle\rangle
\cdots\langle\boldsymbol{a}_k\rangle$.
When $n\leq1$, we set $D_n(X)_Y=0$.
\end{definition}

\begin{example}
The submodule $D_2(X)_Y$ is generated by the elements of the form
\[ \langle y\rangle\langle a\rangle\langle b\rangle
-\langle y\rangle\langle a,ab\rangle
+\langle y\rangle\langle b,ab\rangle, \]
and $D_3(X)_Y$ is generated by the elements of the form
\begin{align*}
&\langle y\rangle\langle a\rangle\langle b\rangle\langle x\rangle
-\langle y\rangle\langle a,ab\rangle\langle x\rangle
+\langle y\rangle\langle b,ab\rangle\langle x\rangle, \\
&\langle y\rangle\langle x\rangle\langle b\rangle\langle c\rangle
-\langle y\rangle\langle x\rangle\langle b,bc\rangle
+\langle y\rangle\langle x\rangle\langle c,bc\rangle, \\
&\langle y\rangle\langle a,b\rangle\langle c\rangle
-\langle y\rangle\langle a,b,bc\rangle
+\langle y\rangle\langle a,ac,bc\rangle
-\langle y\rangle\langle c,ac,bc\rangle, \\
&\langle y\rangle\langle a\rangle\langle b,c\rangle
-\langle y\rangle\langle a,ab,ac\rangle
+\langle y\rangle\langle b,ab,ac\rangle
-\langle y\rangle\langle b,c,ac\rangle
\end{align*}
for $y\in Y$, $a,b,c\in G_\lambda$, $x\in X$.
\end{example}


\begin{remark}
As in \cite{CarterIshiiSaitoTanaka16}, the degenerate submodule $D_n(X)_Y$ corresponds to simplicial decompositions of prismatic complexes.
For example, for
$\langle y\rangle\langle a\rangle\langle b\rangle
-\langle y\rangle\langle a,ab\rangle
+\langle y\rangle\langle b,ab\rangle=0$,
see Figure~\ref{fig:geominterpretation2}, and for
$\langle y\rangle\langle a,b\rangle\langle c\rangle
-\langle y\rangle\langle a,b,bc\rangle
+\langle y\rangle\langle a,ac,bc\rangle
-\langle y\rangle\langle c,ac,bc\rangle=0$,
see Figure~\ref{fig:geominterpretation3}.
\end{remark}

\begin{figure}
\mbox{}\hfill
\begin{picture}(80,100)
 \put(20,40){\line(1,1){40}}
 \put(60,40){\line(-1,1){17}}
 \put(20,80){\line(1,-1){17}}
\thicklines
 \put(40,20){\line(-1,1){40}}
 \put(40,20){\line(1,1){40}}
 \put(40,100){\line(-1,-1){40}}
 \put(40,100){\line(1,-1){40}}
 \put(23,37){\vector(1,-1){0}}
 \put(23,83){\vector(1,1){0}}
 \put(63,43){\vector(1,1){0}}
 \put(63,77){\vector(1,-1){0}}
 \put(17,83){\makebox(0,0)[br]{$a$}}
 \put(63,83){\makebox(0,0)[bl]{$b\oline{*}a$}}
 \put(17,37){\makebox(0,0)[tr]{$b$}}
 \put(63,37){\makebox(0,0)[tl]{$a\uline{*}b$}}
 \put(40,5){\makebox(0,0){$\langle a\rangle\langle b\rangle$}}
\end{picture}
\begin{picture}(40,100)
 \put(20,60){\makebox(0,0){$=$}}
\end{picture}
\begin{picture}(80,100)
 \put(40,70){\line(-2,1){20}}
 \put(40,70){\line(2,1){20}}
 \put(40,50){\line(0,1){20}}
 \put(40,50){\line(-2,-1){20}}
 \put(40,50){\line(2,-1){20}}
\thicklines
 \put(40,20){\line(-1,1){40}}
 \put(40,20){\line(1,1){40}}
 \put(0,60){\line(1,0){80}}
 \put(40,100){\line(-1,-1){40}}
 \put(40,100){\line(1,-1){40}}
 \put(23,37){\vector(1,-1){0}}
 \put(23,83){\vector(1,1){0}}
 \put(43,60){\vector(1,0){0}}
 \put(63,43){\vector(1,1){0}}
 \put(63,77){\vector(1,-1){0}}
 \put(17,83){\makebox(0,0)[br]{$a$}}
 \put(63,83){\makebox(0,0)[bl]{$b\oline{*}a$}}
 \put(17,37){\makebox(0,0)[tr]{$b$}}
 \put(63,37){\makebox(0,0)[tl]{$a\uline{*}b$}}
 \put(43,63){\makebox(0,0)[bl]{$ab$}}
 \put(40,5){\makebox(0,0){$\langle a,ab\rangle-\langle b,ab\rangle$}}
\end{picture}
\hfill\mbox{}
\caption{A geometric interpretation of  the degenerate submodules.}
\label{fig:geominterpretation2}
\end{figure}

\begin{figure}
\begin{center}
 \includegraphics[width=100mm]{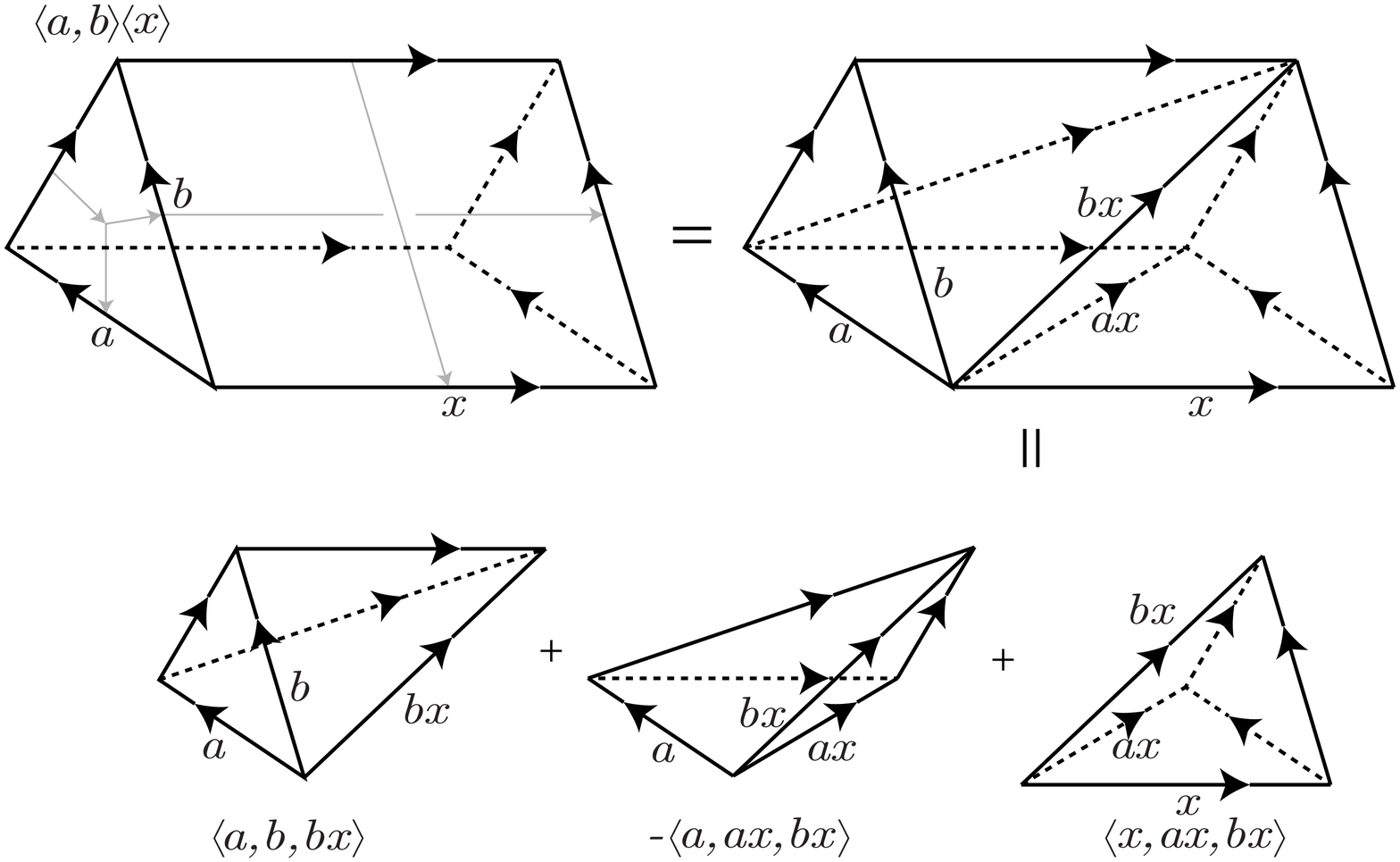}
\end{center}
\caption{A geometric interpretation of  the degenerate submodules.}
\label{fig:geominterpretation3}
\end{figure}

\begin{proposition}
$D_*(X)_Y=(D_n(X)_Y,\partial_n)$ is a subcomplex of $P_*(X)_Y$.
\end{proposition}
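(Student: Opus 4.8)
### Proof Proposal

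The plan is to show that $\partial_n$ carries a generator of $D_n(X)_Y$ into $D_{n-1}(X)_Y$. A generator has the form
\[
g = \langle y\rangle\langle\boldsymbol{a}_1\rangle\cdots\langle\boldsymbol{a}\rangle\langle\boldsymbol{b}\rangle\cdots\langle\boldsymbol{a}_k\rangle
-\langle y\rangle\langle\boldsymbol{a}_1\rangle\cdots\langle\langle\boldsymbol{a}\rangle\langle\boldsymbol{b}\rangle\rangle\cdots\langle\boldsymbol{a}_k\rangle,
\]
where $\langle\boldsymbol{a}\rangle,\langle\boldsymbol{b}\rangle$ are sequences in a common group $G_\lambda$, in positions $p$ and $p+1$. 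Since $\partial_n = \sum_i(-1)^{\epsilon_i}\partial^{(i)}$ acts bracket by bracket, and the two terms of $g$ agree on all brackets other than the distinguished block, the contributions of $\partial^{(i)}$ for brackets outside that block are visibly again of the degenerate form (one applies the same operations $\uline{*}c$, $\oline{*}c$ to both terms, and these respect the identity $\langle\boldsymbol{a}\rangle\langle\boldsymbol{b}\rangle\sim\langle\langle\boldsymbol{a}\rangle\langle\boldsymbol{b}\rangle\rangle$ because $*$, $\uline{*}$, $\oline{*}$ are group homomorphisms on each $G_\lambda$ and commute appropriately with multiplication). Deleting an entire outer bracket likewise sends $g$ to another generator of $D_{n-1}(X)_Y$. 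So everything reduces to understanding $\partial$ applied to the block itself, i.e.\ to proving the single identity
\[
\partial\bigl(\langle\boldsymbol{a}\rangle\langle\boldsymbol{b}\rangle\bigr)
\ \equiv\ \partial\bigl(\langle\langle\boldsymbol{a}\rangle\langle\boldsymbol{b}\rangle\rangle\bigr)
\pmod{D_{*}(X)_Y},
\]
with the understanding that $\partial$ on the two-bracket block $\langle\boldsymbol{a}\rangle\langle\boldsymbol{b}\rangle$ means $\partial^{(\boldsymbol{a})} + (-1)^{|\boldsymbol a|}\partial^{(\boldsymbol b)}$, and $\partial$ on the single bracket $\langle\langle\boldsymbol{a}\rangle\langle\boldsymbol{b}\rangle\rangle$ is the internal $\widetilde\partial+\widehat\partial$.

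The core computation is therefore purely combinatorial: compare the shuffle-sum expansion of $\widehat\partial$ of the merged bracket $\langle\langle\boldsymbol a\rangle\langle\boldsymbol b\rangle\rangle$ with $\widehat\partial\langle\boldsymbol a\rangle\langle\boldsymbol b\rangle + (-1)^{|\boldsymbol a|}\langle\boldsymbol a\rangle\widehat\partial\langle\boldsymbol b\rangle$, and similarly for the $\widetilde\partial$ parts. For the $\widehat\partial$ comparison one checks that deleting an entry from a shuffled word $\langle a_{\lfloor 1\rfloor}b_{1-\lfloor 1\rfloor},\ldots\rangle$ either (i) deletes an $a$-letter or a $b$-letter, in which case the signed sum over all shuffles reorganises — via the standard shuffle recursion $M(s,s+t)$ decomposing according to whether $\mu(1)=1$ — into $\langle\langle\boldsymbol a_{\widehat j}\rangle\langle\boldsymbol b\rangle\rangle$ or $\langle\langle\boldsymbol a\rangle\langle\boldsymbol b_{\widehat j}\rangle\rangle$ with the correct sign, matching $\widehat\partial\langle\boldsymbol a\rangle$ and $\widehat\partial\langle\boldsymbol b\rangle$; or (ii) deletes a "fused" letter at a spot where an $a$-block meets a $b$-block, and those terms cancel in pairs between adjacent shuffles $\mu$, $\mu'$ differing by a transposition at that junction — this is exactly the mechanism illustrated by the hexagon diagram in the Example for $\langle\langle a_1,a_2\rangle\langle b_1,b_2\rangle\rangle$. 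For the $\widetilde\partial$ part one uses that $\widetilde\partial\langle\boldsymbol x\rangle = \uline{*}x_1\langle x_1^{-1}\boldsymbol x_{\widehat 1}\oline{*}x_1\rangle\oline{*}x_1$ only looks at the first letter; since the first letter of every shuffle term of $\langle\langle\boldsymbol a\rangle\langle\boldsymbol b\rangle\rangle$ is either $a_1$ (those $\mu$ with $\mu(1)=1$, which sum to $\langle\langle\boldsymbol a_{\widehat 1}\rangle\langle\boldsymbol b\rangle\rangle$ after conjugating/multiplying by $a_1^{-1}$) or $b_1$ (those with $\mu(1)>1$), and using the group-homomorphism axioms plus the MCB relation $a^{-1}b\oline{*}a = ba^{-1}\uline{*}a$ to reconcile the two groupings, one matches $\widetilde\partial$ of the merged bracket with the appropriate combination modulo lower degenerate generators. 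It is here that one uses the identity $x\oline{*}ab=(x\oline{*}a)\oline{*}(b\oline{*}a)$ and its $\uline{*}$-analogue to push $\oline{*}x_1$, $\uline{*}x_1$ through products of group elements.

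The main obstacle I expect is bookkeeping the signs in the shuffle expansion and verifying that the "junction" deletion terms really cancel in pairs with the right signs — the factor $(-1)^{\sum(\mu(k)-k)}$ in $\langle\langle\boldsymbol a\rangle\langle\boldsymbol b\rangle\rangle_\mu$ together with the alternating sign $(-1)^j$ in $\widehat\partial$ must conspire so that a deleted fused letter appearing in $\mu$ and in the transposed shuffle $\mu'$ contributes opposite signs. A clean way to organise this, following \cite{CarterIshiiSaitoTanaka16}, is to interpret $\langle\langle\boldsymbol a\rangle\langle\boldsymbol b\rangle\rangle$ as the signed sum over a triangulation of the prism $\Delta^{|\boldsymbol a|}\times\Delta^{|\boldsymbol b|}$ and read off the boundary identity geometrically — the "shuffle product" here is literally the Eilenberg–Zilber triangulation, so $\partial$ of the triangulated prism equals the triangulation of $\partial(\text{prism})$, which is exactly the stated congruence. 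I would present the geometric argument as the conceptual reason and relegate the explicit sign check to a short verification, exactly parallel to the multiple conjugation quandle case. The remaining degenerate correction terms (coming from the $\widetilde\partial$ reconciliation via the group axioms) land in $D_{n-1}(X)_Y$ because they are again of the form $\langle\cdots\rangle\langle\boldsymbol c\rangle\langle\boldsymbol d\rangle\langle\cdots\rangle - \langle\cdots\rangle\langle\langle\boldsymbol c\rangle\langle\boldsymbol d\rangle\rangle\langle\cdots\rangle$ with shorter blocks, which closes the induction.
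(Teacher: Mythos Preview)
Your strategy is the same as the paper's: reduce to the single block and compare $\partial$ applied to $\langle\boldsymbol a\rangle\langle\boldsymbol b\rangle$ versus to $\langle\langle\boldsymbol a\rangle\langle\boldsymbol b\rangle\rangle$, splitting the latter according to whether $\mu(1)=1$ or $\mu(1)>1$ for the $\widetilde\partial$ piece, and according to whether the deleted position lies in $\operatorname{Im}\mu$ or not for the $\widehat\partial$ piece, with the ``junction'' terms cancelling in pairs. Two points are worth sharpening. First, the paper obtains an \emph{exact} identity: $\partial(\langle\langle\boldsymbol a\rangle\langle\boldsymbol b\rangle\rangle)$ equals precisely the shuffled version of each term of $\partial(\langle\boldsymbol a\rangle\langle\boldsymbol b\rangle)$ (e.g.\ $\uline{*}a_1\langle\langle a_1^{-1}\boldsymbol a_{\widehat1}\oline{*}a_1\rangle\langle\boldsymbol b\oline{*}a_1\rangle\rangle\oline{*}a_1$, $\sum_i(-1)^i\langle\langle\boldsymbol a_{\widehat i}\rangle\langle\boldsymbol b\rangle\rangle$, etc.), so the difference is directly a sum of degenerate generators --- there are no ``residual correction terms'' and no induction is needed. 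Second, the axiom that does the work in the $b_1$-case of $\widetilde\partial$ is not $x\oline{*}ab=(x\oline{*}a)\oline{*}(b\oline{*}a)$ but rather the combination of the MCB relation $c\uline{*}b_1 = b_1^{-1}cb_1\oline{*}b_1$ (equivalent to $a^{-1}b\oline{*}a=ba^{-1}\uline{*}a$) with the fact that $\oline{*}b_1$ is a group homomorphism on $G_\lambda$; this is what converts $(a_{\lfloor j\rfloor}\uline{*}b_1)(b_1^{-1}b_{j-\lfloor j\rfloor+1}\oline{*}b_1)$ into $b_1^{-1}a_{\lfloor j\rfloor}b_{j-\lfloor j\rfloor+1}\oline{*}b_1$ on the nose.
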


\begin{proof}
Let
$\langle\boldsymbol{a}\rangle=\langle a_1,\ldots,a_s\rangle$,
$\langle\boldsymbol{b}\rangle=\langle b_1,\ldots,b_t\rangle$
($a_i,b_j\in G_\lambda$).
By the definition,
\begin{align*}
\partial(\langle\boldsymbol{a}\rangle \langle\boldsymbol{b}\rangle)
&=\uline{*}a_1
\langle a_1^{-1}\boldsymbol{a}_{\widehat{1}}\oline{*}a_1\rangle
\langle\boldsymbol{b}\oline{*}a_1\rangle
\oline{*}a_1 \\
&\hspace{1em}+\sum_{i=1}^s(-1)^i
\langle\boldsymbol{a}_{\widehat{i}}\rangle
\langle\boldsymbol{b}\rangle \\
&\hspace{1em}+(-1)^s\uline{*}b_1
\langle\boldsymbol{a}\uline{*}b_1\rangle
\langle b_1^{-1}\boldsymbol{b}_{\widehat{1}}\oline{*}b_1\rangle
\oline{*}b_1 \\
&\hspace{1em}+\sum_{i=1}^t(-1)^{s+i}
\langle\boldsymbol{a}\rangle
\langle\boldsymbol{b}_{\widehat{i}}\rangle.
\end{align*}
It is sufficient to show that
$\partial(\langle\langle\boldsymbol{a}\rangle\langle\boldsymbol{b}\rangle\rangle)$ coincides with
\begin{align}
&\uline{*}a_1\langle
\langle a_1^{-1}\boldsymbol{a}_{\widehat{1}}\oline{*}a_1\rangle
\langle\boldsymbol{b}\oline{*}a_1\rangle
\rangle\oline{*}a_1 \label{eq1} \\
&+\sum_{i=1}^s(-1)^i\langle
\langle\boldsymbol{a}_{\widehat{i}}\rangle
\langle\boldsymbol{b}\rangle
\rangle \label{eq2} \\
&+(-1)^s\uline{*}b_1\langle
\langle\boldsymbol{a}\uline{*}b_1\rangle
\langle b_1^{-1}\boldsymbol{b}_{\widehat{1}}\oline{*}b_1\rangle
\rangle\oline{*}b_1 \label{eq3} \\
&+\sum_{i=1}^t(-1)^{s+i}\langle
\langle\boldsymbol{a}\rangle
\langle\boldsymbol{b}_{\widehat{i}}\rangle
\rangle. \label{eq4}
\end{align}

For the first term \eqref{eq1}, we have
\begin{align*}
\eqref{eq1}
&=\uline{*}a_1\left(
\sum_{\mu\in M(s-1,s+t-1)}(-1)^{\sum_{k=1}^{s-1}(\mu(k)-k)}\langle\boldsymbol{d}\rangle
\right)\oline{*}a_1 \\
&=\sum_{\mu \in M(s-1,s+t-1)}
(-1)^{\sum_{k=1}^{s-1}(\mu(k)-k)}
\uline{*}a_1\langle\boldsymbol{d}\rangle\oline{*}a_1,
\end{align*}
where $\langle\boldsymbol{d}\rangle=\langle d_1,\ldots,d_{s+t-1}\rangle$ and
\[ d_j
=(a_1^{-1}a_{\lfloor j;\mu\rfloor+1}\oline{*}a_1)(b_{j-\lfloor j;\mu \rfloor}\oline{*}a_1)
=a_1^{-1}a_{\lfloor j;\mu\rfloor+1}b_{j-\lfloor j;\mu \rfloor}\oline{*}a_1. \]
We define $\widetilde{\mu}\in M(s,s+t-1,\widehat{1})$ for $\mu\in M(s-1,s+t-1)$ by $\widetilde{\mu}(k)=\mu(k-1)$, and this induces a one-to-one correspondence from $M(s-1,s+t-1)$ to $M(s,s+t-1,\widehat{1})$.
It is noted that
\[ \lfloor j;\mu\rfloor=\begin{cases}
0 & \text{if $1\leq j<\widetilde{\mu}(2)$,} \\
\lfloor j;\widetilde{\mu}\rfloor-1 & \text{otherwise.}
\end{cases} \]
Then we have
\[ \eqref{eq1}
=\sum_{\widetilde{\mu}\in M(s,s+t-1,\widehat{1})}
(-1)^{\sum_{k=2}^{s}(\widetilde{\mu}(k)-k+1)}
\uline{*}a_1\langle\boldsymbol{d}\rangle\oline{*}a_1, \]
and
\[ d_j=\begin{cases}
b_j\oline{*}a_1 & \text{if $1\leq j<\widetilde{\mu}(2)$,} \\
a_1^{-1}a_{\lfloor j;\widetilde{\mu}\rfloor}b_{j-\lfloor j;\widetilde{\mu}\rfloor+1}\oline{*}a_1 & \text{otherwise}.
\end{cases} \]

For the second term \eqref{eq2}, we have
\[ \eqref{eq2}
=\sum_{i=1}^s(-1)^i\sum_{\mu\in M(s-1,s+t-1)}
(-1)^{\sum_{k=1}^{s-1}(\mu(k)-k)}
\langle\boldsymbol{d}\rangle, \]
where $\langle\boldsymbol{d}\rangle=\langle d_1,\ldots,d_{s+t-1}\rangle$ and
\[ d_j=\begin{cases}
a_{\lfloor j;\mu\rfloor}b_{j-\lfloor j;\mu\rfloor} & \text{if $\lfloor j;\mu\rfloor\leq i-1$,} \\
a_{\lfloor j;\mu\rfloor+1}b_{j-\lfloor j;\mu\rfloor} & \text{if $\lfloor j;\mu\rfloor\geq i$.}
\end{cases} \]
We define $\widetilde{\mu}\in M(s,s+t-1,\widehat{i})$ for $\mu\in M(s-1,s+t-1)$ by 
\[ \widetilde{\mu}(k)=\begin{cases}
\mu(k) & \text{if $k \leq i-1$,} \\
\mu(k-1) & \text{if $k \geq i+1$,}
\end{cases} \]
and this induces a one-to-one correspondence from $M(s-1,s+t-1)$ to $M(s,s+t-1,\widehat{i})$.
It is noted that
\[ \lfloor j;\widetilde{\mu}\rfloor=\begin{cases}
\lfloor j;\mu\rfloor & \text{if $\lfloor j;\mu\rfloor\leq i-1$,} \\
\lfloor j;\mu\rfloor+1 & \text{if $\lfloor j;\mu\rfloor\geq i$.}
\end{cases} \]
Then we have
\begin{align*}
\eqref{eq2}
&=\sum_{i=1}^s(-1)^i\sum_{\widetilde{\mu}\in M(s,s+t-1,\widehat{i})}
(-1)^{\sum_{k=1}^{i-1}(\widetilde{\mu}(k)-k)+\sum_{k=i+1}^s(\widetilde{\mu}(k)-k+1)}
\langle\boldsymbol{d}\rangle \\
&=-\sum_{i=1}^s\sum_{\widetilde{\mu}\in M(s,s+t-1,\widehat{i})}
(-1)^{\sum_{k=1}^{i-1}(\widetilde{\mu}(k)-k+1)+\sum_{k=i+1}^s(\widetilde{\mu}(k)-k+1)}
\langle\boldsymbol{d}\rangle,
\end{align*}
and
\[ d_j=\begin{cases}
a_{\lfloor j;\widetilde{\mu}\rfloor}b_{j-\lfloor j;\widetilde{\mu}\rfloor} & \text{if $\lfloor j;\widetilde{\mu}\rfloor\leq i-1$,} \\
a_{\lfloor j;\widetilde{\mu}\rfloor}b_{j-\lfloor j;\widetilde{\mu}\rfloor+1} & \text{if $\lfloor j;\widetilde{\mu}\rfloor\geq i+1$.}
\end{cases} \]

For the third term \eqref{eq3}, we have
\begin{align*}
\eqref{eq3}
&=(-1)^s\uline{*}b_1\left(
\sum_{\mu\in M(s, s+t-1)}(-1)^{\sum_{k=1}^s(\mu(k)-k)}\langle\boldsymbol{d}\rangle
\right)\oline{*}b_1 \\
&=\sum_{\mu\in M(s, s+t-1)}(-1)^{\sum_{k=1}^s(\mu(k)-k+1)}
\uline{*}b_1\langle\boldsymbol{d}\rangle\oline{*}b_1,
\end{align*}
where $\langle\boldsymbol{d}\rangle=\langle d_1,\ldots,d_{s+t-1}\rangle$ and
\begin{align*}
d_j
&=(a_{\lfloor j\rfloor}\uline{*}b_1)(b_1^{-1}b_{j-\lfloor j\rfloor+1}\oline{*}b_1)
=(a_{\lfloor j\rfloor}b_1b_1^{-1}\uline{*}b_1)(b_1^{-1}b_{j-\lfloor j\rfloor+1}\oline{*}b_1) \\
&=(b_1^{-1}a_{\lfloor j\rfloor}b_1\oline{*}b_1)(b_1^{-1}b_{j-\lfloor j\rfloor+1}\oline{*}b_1)
=b_1^{-1}a_{\lfloor j\rfloor}b_{j-\lfloor j\rfloor+1}\oline{*}b_1.
\end{align*}

For the fourth term \eqref{eq4}, we have
\begin{align*}
\eqref{eq4}
&=\sum_{i=1}^t(-1)^{s+i}\left(
\sum_{\mu\in M(s,s+t-1)}(-1)^{\sum_{k=1}^s(\mu(k) -k)}
\langle\boldsymbol{d}\rangle\right) \\
&=\sum_{i=1}^t(-1)^i\sum_{\mu\in M(s, s+t-1)}(-1)^{\sum_{k=1}^s(\mu(k)-k+1)}
\langle\boldsymbol{d}\rangle,
\end{align*}
where
\[ d_j=\begin{cases}
a_{\lfloor j\rfloor}b_{j-\lfloor j\rfloor} & \text{if $j-\lfloor j\rfloor\leq i-1$,} \\
a_{\lfloor j\rfloor}b_{j-\lfloor j\rfloor+1} & \text{if $j-\lfloor j\rfloor\geq i$.}
\end{cases} \]

On the other hand, by the definition,
\begin{align}
\partial(\langle\langle\boldsymbol{a}\rangle\langle\boldsymbol{b}\rangle\rangle)
&=\partial\left(
\sum_{\mu\in M(s,s+t)}(-1)^{\sum_{k=1}^s(\mu(k)-k)}\langle\boldsymbol{c}\rangle
\right) \nonumber\\
&=\sum_{\mu\in M(s,s+t)}(-1)^{\sum_{k=1}^s(\mu(k) -k)}\left(
\uline{*}c_1\langle c_1^{-1}\boldsymbol{c}_{\widehat{1}}\oline{*}c_1\rangle\oline{*}c_1
+\sum_{i=1}^{s+t}(-1)^i\langle\boldsymbol{c}_{\widehat{i}}\rangle
\right) \nonumber \\
&=\sum_{\begin{subarray}{c} \mu\in M(s,s+t), \\ \mu(1)=1 \end{subarray}}
(-1)^{\sum_{k=1}^s(\mu(k)-k)}
\uline{*}a_1\langle a_1^{-1}\boldsymbol{c}_{\widehat{1}}\oline{*}a_1\rangle\oline{*}a_1 \label{eq1'} \\
&\hspace{1em}+\sum_{\begin{subarray}{c} \mu\in M(s,s+t), \\ \mu(1)>1 \end{subarray}}
(-1)^{\sum_{k=1}^s(\mu(k)-k)}
\uline{*}b_1\langle b_1^{-1}\boldsymbol{c}_{\widehat{1}}\oline{*}b_1\rangle\oline{*}b_1 \label{eq3'} \\
&\hspace{1em}+\sum_{\mu\in M(s,s+t)}
(-1)^{\sum_{k=1}^s(\mu(k)-k)}
\sum_{i=1}^{s+t}(-1)^i\langle\boldsymbol{c}_{\widehat{i}}\rangle. \label{eq24'}
\end{align}
where $\langle\boldsymbol{c}\rangle=\langle c_1,\ldots,c_{s+t}\rangle$ and $c_j=a_{\lfloor j\rfloor}b_{j-\lfloor j\rfloor}$.

For the first term \eqref{eq1'}, we define $\widetilde{\mu}\in M(s,s+t-1,\widehat{1})$ for $\mu\in M(s,s+t)$ with $\mu(1)=1$ by $\widetilde{\mu}(k)=\mu(k)-1$, and this induces a one-to-one correspondence from $\{\mu\in M(s,s+t)\,|\,\mu(1)=1\}$ to $M(s,s+t-1,\widehat{1})$.
It is noted that
\[ \lfloor j+1;\mu\rfloor=\begin{cases}
1 & \text{if $1\leq j<\widetilde{\mu}(2)$,} \\
\lfloor j;\widetilde{\mu}\rfloor & \text{otherwise.}
\end{cases} \]
Then we have
\[ \eqref{eq1'}
=\sum_{\widetilde{\mu}\in M(s,s+t-1,\widehat{1})}
(-1)^{\sum_{k=2}^s(\widetilde{\mu}(k)-k+1)}
\uline{*}a_1\langle\boldsymbol{d}\rangle\oline{*}a_1, \]
where
$\langle\boldsymbol{d}\rangle=\langle d_1,\ldots,d_{s+t-1}\rangle$ and
\[ d_j=a_1^{-1}c_{j+1}\oline{*}a_1
=\begin{cases}
b_j\oline{*}a_1 & \text{if $1\leq j<\widetilde{\mu}(2)$,} \\
a_1^{-1}a_{\lfloor j;\widetilde{\mu}\rfloor}b_{j-\lfloor j;\widetilde{\mu}\rfloor+1}\oline{*}a_1 & \text{otherwise.}
\end{cases} \]
Thus we have $\eqref{eq1}=\eqref{eq1'}$.

For the second term \eqref{eq3'}, we define $\widetilde{\mu}\in M(s,s+t-1)$ for $\mu\in M(s,s+t)$ with $\mu(1)>1$ by $\widetilde{\mu}(k)=\mu(k)-1$, and this induces a one-to-one correspondence from $\{\mu\in M(s,s+t)\,|\,\mu(1)>1\}$ to $M(s,s+t-1)$.
It is noted that $\lfloor j+1;\mu\rfloor=\lfloor j;\widetilde{\mu}\rfloor$.
Then we have
\[ \eqref{eq3'}=
\sum_{\widetilde{\mu}\in M(s, s+t-1)}
(-1)^{\sum_{k=1}^s(\widetilde{\mu}(k)-k+1)}
\uline{*}b_1\langle\boldsymbol{d}\rangle\oline{*}b_1, \]
where $\langle\boldsymbol{d}\rangle=\langle d_1,\ldots,d_{s+t-1}\rangle$ and
\[ d_j=b_1^{-1}c_{j+1}\oline{*}b_1
=b_1^{-1}a_{\lfloor j;\widetilde{\mu}\rfloor}b_{j-\lfloor j;\widetilde{\mu}\rfloor+1}\oline{*}b_1. \]
Thus we have $\eqref{eq3}=\eqref{eq3'}$.

For the last term \eqref{eq24'}, we have
\begin{align}
\eqref{eq24'}
&=\sum_{i=1}^{s+t}(-1)^i\left(
\sum_{\begin{subarray}{c} \mu \in M(s, s+t), \\ i\in\operatorname{Im}\mu \end{subarray}}
(-1)^{\sum_{k=1}^s(\mu(k)-k)}\langle\boldsymbol{c}_{\widehat{i}}\rangle
+\sum_{\begin{subarray}{c} \mu \in M(s,s+t), \\ i\not\in\operatorname{Im}\mu \end{subarray}}
(-1)^{\sum_{k=1}^s(\mu(k)-k)}\langle\boldsymbol{c}_{\widehat{i}}\rangle
\right) \nonumber\\
&=\sum_{i=1}^{s+t}\sum_{\begin{subarray}{c}
\mu\in M(s,s+t), \\ i\in\operatorname{Im}\mu,~i+1\not\in\overline{\operatorname{Im}\mu}
\end{subarray}}
(-1)^i(-1)^{\sum_{k=1}^s(\mu(k)-k)}
\langle\boldsymbol{c}_{\widehat{i}}\rangle \label{eq24-1} \\
&\hspace{1em}+\sum_{i=1  }^{s+t-1}\sum_{\begin{subarray}{c}
\mu\in M(s,s+t), \\ i\in\operatorname{Im}\mu,~i+1\in\overline{\operatorname{Im}\mu}
\end{subarray}}
(-1)^i(-1)^{\sum_{k=1}^s(\mu(k)-k)}
\langle\boldsymbol{c}_{\widehat{i}}\rangle \label{eq24-2} \\
&\hspace{1em}+\sum_{i=1}^{s+t}\sum_{\begin{subarray}{c}
\mu\in M(s,s+t), \\ i\in\overline{\operatorname{Im}\mu},~i+1\not\in\operatorname{Im}\mu
\end{subarray}}
(-1)^i(-1)^{\sum_{k=1}^s(\mu(k)-k)}
\langle\boldsymbol{c}_{\widehat{i}}\rangle \label{eq24-3} \\
&\hspace{1em}+\sum_{i=1}^{s+t-1}\sum_{\begin{subarray}{c}
\mu\in M(s,s+t), \\ i\in \overline{\operatorname{Im}\mu},~i+1\in\operatorname{Im}\mu
\end{subarray}}
(-1)^i(-1)^{\sum_{k=1}^s(\mu(k)-k)}
\langle\boldsymbol{c}_{\widehat{i}}\rangle, \label{eq24-4}
\end{align}
where $\langle\boldsymbol{c}\rangle=\langle c_1,\ldots,c_{s+t}\rangle$ and $\overline{\operatorname{Im}\mu}=\{1,\ldots , s+t\} \setminus \operatorname{Im}\mu$.

For \eqref{eq24-1}, we define $\widetilde{\mu}\in M(s,s+t-1;\widehat{\ell})$ 
for $\mu\in M(s,s+t)$ with $\mu(\ell)=i$ and $i+1\not\in\overline{\operatorname{Im}\mu}$ by
\[ \widetilde{\mu}(k)=\begin{cases}
\mu(k) & \text{if $k<\ell$,} \\
\mu(k)-1 & \text{if $k>\ell$,}
\end{cases} \]
and this induces a one-to-one correspondence from
\begin{align*}
&\{\mu\in M(s,s+t)\,|\,i\in\operatorname{Im}\mu, i+1\not\in\overline{\operatorname{Im}\mu}\} \\
&\left(=\left(\bigsqcup_{\ell=1}^{s-1}\{\mu\in M(s,s+t)\,|\,\text{$\mu(\ell)=i$, $\mu(\ell+1)=i+1$}\}\right)\cup \{\mu\in M(s,s+t)\,|\,\mu(s)=s+t\}\right) 
\end{align*}
to $\bigsqcup_{\ell=1}^{s-1}\{\widetilde{\mu}\in M(s,s+t-1;\widehat{\ell})\,|\,\widetilde{\mu}(\ell+1)=i\}\cup M(s,s+t-1;\widehat{s})$.
It is noted that
\begin{align*}
\lfloor j;\widetilde{\mu}\rfloor
&=\begin{cases}
\lfloor j;\mu\rfloor & \text{if $j<i$,} \\
\lfloor j+1;\mu\rfloor & \text{if $j\geq i$,}
\end{cases} \\
&=\begin{cases}
\lfloor j;\mu\rfloor & \text{if $\lfloor j;\mu\rfloor\leq\ell-1$,} \\
\lfloor j+1;\mu\rfloor & \text{if $\lfloor j+1;\mu\rfloor\geq\ell+1$,}
\end{cases} \\
&=\begin{cases}
\lfloor j;\mu\rfloor & \text{if $\lfloor j;\widetilde{\mu}\rfloor\leq\ell-1$,} \\
\lfloor j+1;\mu\rfloor & \text{if $\lfloor j;\widetilde{\mu}\rfloor\geq\ell+1$.}
\end{cases}
\end{align*}
Then we have
\begin{alignat*}{4}
\eqref{eq24-1}
&=
&\sum_{i=1}^{s+t}\sum_{\ell=1}^s
&\sum_{\begin{subarray}{c}
\widetilde{\mu}\in M(s,s+t-1;\widehat{\ell}), \\ \text{$\widetilde{\mu}(\ell+1)=i$ if $\ell\not=s$}
\end{subarray}}
&(-1)^{\widetilde{\mu}(\ell+1)}
&(-1)^{\sum_{k=1}^{\ell-1}(\widetilde{\mu}(k)-k)+(\widetilde{\mu}(\ell+1)-\ell)
+\sum_{k=\ell+1}^s(\widetilde{\mu}(k)-k+1)}
&\langle\boldsymbol{d}\rangle \\
&=
&-\sum_{i=1}^{s+t}\sum_{\ell=1}^s
&\sum_{\begin{subarray}{c}
\widetilde{\mu}\in M(s,s+t-1;\widehat{\ell}), \\ \text{$\widetilde{\mu}(\ell+1)=i$ if $\ell\not=s$}
\end{subarray}}
&&(-1)^{\sum_{k=1}^{\ell-1}(\widetilde{\mu}(k)-k+1)
+\sum_{k=\ell+1}^s(\widetilde{\mu}(k)-k+1)}
&\langle\boldsymbol{d}\rangle \\
&=
&-\sum_{\ell=1}^s
&\sum_{\widetilde{\mu}\in M(s,s+t-1;\widehat{\ell})}
&&(-1)^{\sum_{k=1}^{\ell-1}(\widetilde{\mu}(k)-k+1)
+\sum_{k=\ell+1}^s(\widetilde{\mu}(k)-k+1)}
&\langle\boldsymbol{d}\rangle,
\end{alignat*}
where $\langle\boldsymbol{d}\rangle=\langle d_1,\ldots,d_{s+t-1}\rangle$ and
\begin{align*}
d_j
&=\begin{cases}
c_j & \text{if $j<i$,} \\
c_{j+1} & \text{if $j\geq i$,}
\end{cases} \\
&=\begin{cases}
a_{\lfloor j;\widetilde{\mu}\rfloor}b_{j-\lfloor j;\widetilde{\mu}\rfloor}
& \text{if $\lfloor j;\widetilde{\mu}\rfloor\leq\ell-1$,} \\
a_{\lfloor j;\widetilde{\mu}\rfloor}b_{j-\lfloor j;\widetilde{\mu}\rfloor+1}
& \text{if $\lfloor j;\widetilde{\mu}\rfloor\geq\ell+1$.}
\end{cases}
\end{align*}
Thus we have $\eqref{eq24-1}=\eqref{eq2}$.

For \eqref{eq24-3}, we define $\widetilde{\mu}\in M(s,s+t-1)$ with $i\not\in\operatorname{Im}\widetilde{\mu}$ for $\mu\in M(s,s+t)$ with $i\in\overline{\operatorname{Im}\mu}$, $i+1\not\in\operatorname{Im}\mu$ by
\[ \widetilde{\mu}(k)=\begin{cases}
\mu(k) & \text{if $\mu(k)<i$,} \\
\mu(k)-1 & \text{if $\mu(k)>i$,}
\end{cases} \]
and this induces a one-to-one correspondence from
\[ \{\mu\in M(s,s+t)\,|\,\text{$i\in\overline{\operatorname{Im}\mu}$, $i+1\not\in \operatorname{Im}\mu$, 
$\ell=i-\lfloor i;\mu\rfloor$}\} \]
to $\{\widetilde{\mu}\in M(s,s+t-1)\,|\,\text{$i\not\in\operatorname{Im}\widetilde{\mu}$, $\ell=i-\lfloor i;\widetilde{\mu}\rfloor$}\}$ for each $\ell$.
It is noted that
\begin{align*}
\lfloor j;\widetilde{\mu}\rfloor
&=\begin{cases}
\lfloor j;\mu\rfloor & \text{if $j<i$,} \\
\lfloor j+1;\mu\rfloor & \text{if $j\geq i$,}
\end{cases} \\
&=\begin{cases}
\lfloor j;\mu\rfloor & \text{if $j-\lfloor j;\mu\rfloor\leq\ell-1$,} \\
\lfloor j+1;\mu\rfloor & \text{if $j+1-\lfloor j+1;\mu\rfloor\geq\ell+1$,}
\end{cases} \\
&=\begin{cases}
\lfloor j;\mu\rfloor & \text{if $j-\lfloor j;\widetilde{\mu}\rfloor\leq\ell-1$,} \\
\lfloor j+1;\mu\rfloor & \text{if $j-\lfloor j;\widetilde{\mu}\rfloor\geq\ell$.}
\end{cases}
\end{align*}
Then we have
\begin{alignat*}{4}
\eqref{eq24-3}
&=
&\sum_{i=1}^{s+t}\sum_{\ell=1}^t
&&\sum_{\begin{subarray}{c}
\widetilde{\mu}\in M(s,s+t-1), \\
i\not\in\operatorname{Im}\widetilde{\mu},\ell=i-\lfloor i;\widetilde{\mu}\rfloor
\end{subarray}}
&(-1)^i
&(-1)^{\sum_{\mu(k)<i}(\widetilde{\mu}(k)-k)+\sum_{\mu(k)>i}(\widetilde{\mu}(k)-k+1)}
&\langle\boldsymbol{d}\rangle \\
&=
&\sum_{i=1}^{s+t}\sum_{\ell=1}^t
&(-1)^\ell
&\sum_{\begin{subarray}{c}
\widetilde{\mu} \in M(s,s+t-1), \\ i\not\in\operatorname{Im}\widetilde{\mu},\ell=i-\lfloor i;\widetilde{\mu}\rfloor
\end{subarray}}
&&(-1)^{\sum_{\mu(k)<i}(\mu(k)-k+1)+\sum_{\mu(k)>i}(\mu(k)-k+1)}
&\langle\boldsymbol{d}\rangle \\
&=
&\sum_{i=1}^{s+t}\sum_{\ell=1}^t
&(-1)^\ell
&\sum_{\begin{subarray}{c}
\widetilde{\mu}\in M(s,s+t-1), \\
i\not\in\operatorname{Im}\widetilde{\mu},\ell=i-\lfloor i;\widetilde{\mu}\rfloor
\end{subarray}}
&&(-1)^{\sum_{k=1}^s(\mu(k)-k+1)}
&\langle\boldsymbol{d}\rangle \\
&=
&\sum_{\ell=1}^t
&(-1)^\ell
&\sum_{\widetilde{\mu}\in M(s,s+t-1)}
&&(-1)^{\sum_{k=1}^s(\mu(k)-k+1)}
&\langle\boldsymbol{d}\rangle,
\end{alignat*}
where $\langle\boldsymbol{d}\rangle=\langle d_1,\ldots,d_{s+t-1}\rangle$ and
\begin{align*}
d_j
&=\begin{cases}
c_j & \text{if $j<i$,} \\
c_{j+1} & \text{if $j\geq i$,}
\end{cases} \\
&=\begin{cases}
a_{\lfloor j;\widetilde{\mu}\rfloor}b_{j-\lfloor j;\widetilde{\mu}\rfloor}
& \text{if $j-\lfloor j;\widetilde{\mu}\rfloor\leq\ell-1$,} \\
a_{\lfloor j;\widetilde{\mu}\rfloor}b_{j-\lfloor j;\widetilde{\mu}\rfloor+1}
& \text{if $j-\lfloor j;\widetilde{\mu}\rfloor\geq\ell$.}
\end{cases}
\end{align*}
Thus we have $\eqref{eq24-3}=\eqref{eq4}$.

For \eqref{eq24-2}, we define $\widetilde{\mu}\in M(s,s+t-1)$ with $\widetilde{\mu}(\ell)=i$ for $\mu\in M(s,s+t)$ with $\mu(\ell)=i$ and $i+1\in\overline{\operatorname{Im}\mu}$ by
\[ \widetilde{\mu}(k)=\begin{cases}
\mu(k) & \text{if $k\leq\ell$,} \\
\mu(k)-1 & \text{if $k>\ell$,}
\end{cases} \]
and this induces a one-to-one correspondence from
\begin{align*}
&\{\mu\in M(s,s+t)\,|\,\text{$i\in\operatorname{Im}\mu$, $i+1\in\overline{\operatorname{Im}\mu}$}\} \\
&\left(=\bigsqcup_{\ell=1}^s\{\mu\in M(s, s+t)\,|\,\text{$\mu(\ell)=i$, $i+1\in\overline{\operatorname{Im}\mu}$}\}\right)
\end{align*}
to $\bigsqcup_{\ell=1}^s\{\widetilde{\mu}\in M(s,s+t-1)\,|\,\widetilde{\mu}(\ell)=i\}$.
It is noted that
\begin{align*}
\lfloor j;\widetilde{\mu}\rfloor
&=\begin{cases}
\lfloor j;\mu\rfloor & \text{if $j<i$,} \\
\lfloor j+1;\mu\rfloor & \text{if $j\geq i$,}
\end{cases} \\
&=\begin{cases}
\lfloor j;\mu\rfloor & \text{if $\lfloor j;\mu\rfloor\leq\ell-1$,} \\
\lfloor j+1;\mu\rfloor & \text{if $\lfloor j+1;\mu\rfloor\geq\ell$,}
\end{cases} \\
&=\begin{cases}
\lfloor j;\mu\rfloor & \text{if $\lfloor j;\widetilde{\mu}\rfloor\leq\ell-1$,} \\
\lfloor j+1;\mu\rfloor & \text{if $\lfloor j;\widetilde{\mu}\rfloor\geq\ell$.}
\end{cases}
\end{align*}
Then we have
\begin{alignat*}{4}
\eqref{eq24-2}
&=
&\sum_{i=1}^{s+t-1}\sum_{\ell=1}^s
&&\sum_{\begin{subarray}{c}
\widetilde{\mu}\in M(s,s+t-1), \\ \widetilde{\mu}(\ell)=i
\end{subarray}}
&(-1)^i
&(-1)^{\sum_{k=1}^\ell(\widetilde{\mu}(k)-k)+\sum_{k=\ell+1}^s(\widetilde{\mu}(k)-k+1)}
&\langle\boldsymbol{d}\rangle \\
&=
&\sum_{i=1}^{s+t-1}\sum_{\ell=1}^s
&&\sum_{\begin{subarray}{c}
\widetilde{\mu}\in M(s,s+t-1), \\ \widetilde{\mu}(\ell)=i
\end{subarray}}
&(-1)^\ell
&(-1)^{\sum_{k=1}^{\ell-1}(\widetilde{\mu}(k)-k)+\sum_{k=\ell+1}^s(\widetilde{\mu}(k)-k+1)}
&\langle\boldsymbol{d}\rangle \\
&=
&-\sum_{i=1}^{s+t-1}\sum_{\ell=1}^s
&&\sum_{\begin{subarray}{c}
\widetilde{\mu}\in M(s,s+t-1), \\ \widetilde{\mu}(\ell)=i
\end{subarray}}
&&(-1)^{\sum_{k=1}^{\ell-1}(\widetilde{\mu}(k)-k+1)+\sum_{k=\ell+1}^s(\widetilde{\mu}(k)-k+1)}
&\langle\boldsymbol{d}\rangle \\
&=
&-\sum_{\ell=1}^s
&&\sum_{\widetilde{\mu}\in M(s,s+t-1)}
&&(-1)^{\sum_{k=1}^{\ell-1}(\widetilde{\mu}(k)-k+1)+\sum_{k=\ell+1}^s(\widetilde{\mu}(k)-k+1)}
&\langle\boldsymbol{d}\rangle,
\end{alignat*}
where $\langle\boldsymbol{d}\rangle=\langle d_1,\ldots,d_{s+t-1}\rangle$ and
\begin{align*}
d_j
&=\begin{cases}
c_j & \text{if $j<i$,} \\
c_{j+1} & \text{if $j\geq i$.}
\end{cases} \\
&=\begin{cases}
a_{\lfloor j;\widetilde{\mu}\rfloor}b_{j-\lfloor j;\widetilde{\mu}\rfloor}
& \text{if $\lfloor j;\widetilde{\mu}\rfloor\leq\ell-1$,} \\
a_{\lfloor j;\widetilde{\mu}\rfloor}b_{j-\lfloor j;\widetilde{\mu}\rfloor+1}
& \text{if $\lfloor j;\widetilde{\mu}\rfloor\geq\ell$.}
\end{cases}
\end{align*}

For \eqref{eq24-4}, we define $\widetilde{\mu}\in M(s,s+t-1)$ with $\widetilde{\mu}(\ell)=i$ for $\mu\in M(s,s+t)$ with $i\in\overline{\operatorname{Im}\mu}$ and $\mu(\ell)=i+1$ by
\[ \widetilde{\mu}(k)=\begin{cases}
\mu(k) & \text{if $\mu(k)<i$,} \\
\mu(k)-1 & \text{if $\mu(k)>i$,}
\end{cases} \]
and this induces a one-to-one correspondence from
\[ \{\mu\in M(s, s+t)\,|\,\text{$i\in\overline{\operatorname{Im}\mu}$, $\mu(\ell)=i+1$}\} \]
to $\{\widetilde{\mu}\in M(s,s+t-1)\,|\,\widetilde{\mu}(\ell)=i\}$ for each $\ell$.
It is noted that
\begin{align*}
\lfloor j;\widetilde{\mu}\rfloor
&=\begin{cases}
\lfloor j;\mu\rfloor & \text{if $j<i$,} \\
\lfloor j+1;\mu\rfloor & \text{if $j\geq i$,}
\end{cases} \\
&=\begin{cases}
\lfloor j;\mu\rfloor & \text{if $\lfloor j;\mu\rfloor\leq\ell-1$, $j\not=i$,} \\
\ell & \text{if $j=i$,} \\
\lfloor j+1;\mu\rfloor & \text{if $\lfloor j+1;\mu\rfloor\geq\ell$,}
\end{cases} \\
&=\begin{cases}
\lfloor j;\mu\rfloor & \text{if $\lfloor j;\widetilde{\mu}\rfloor\leq\ell-1$,} \\
\lfloor j+1;\mu\rfloor & \text{if $(\lfloor j;\widetilde{\mu}\rfloor\geq\ell$.}
\end{cases}
\end{align*}
Then we have
\begin{alignat*}{4}
\eqref{eq24-4}
&=
&\sum_{i=1}^{s+t-1}\sum_{\ell=1}^s
&&\sum_{\begin{subarray}{c}
\widetilde{\mu}\in M(s,s+t-1), \\ \widetilde{\mu}(\ell)=i
\end{subarray}}
&(-1)^i
&(-1)^{\sum_{k=1}^{\ell-1}(\widetilde{\mu}(k)-k)+\sum_{k=\ell}^s(\widetilde{\mu}(k)-k+1)}
&\langle\boldsymbol{d}\rangle \\
&=
&\sum_{i=1}^{s+t-1}\sum_{\ell=1}^s
&&\sum_{\begin{subarray}{c}
\widetilde{\mu}\in M(s,s+t-1), \\ \widetilde{\mu}(\ell)=i
\end{subarray}}
&(-1)^{-\ell+1}
&(-1)^{\sum_{k=1}^{\ell-1}(\widetilde{\mu}(k)-k)+\sum_{k=\ell+1}^s(\widetilde{\mu}(k)-k+1)}
&\langle\boldsymbol{d}\rangle \\
&=
&\sum_{i=1}^{s+t-1}\sum_{\ell=1}^s
&&\sum_{\begin{subarray}{c}
\widetilde{\mu}\in M(s,s+t-1), \\ \widetilde{\mu}(\ell)=i
\end{subarray}}
&&(-1)^{\sum_{k=1}^{\ell-1}(\widetilde{\mu}(k)-k+1)+\sum_{k=\ell+1}^s(\widetilde{\mu}(k)-k+1)}
&\langle\boldsymbol{d}\rangle \\
&=
&\sum_{\ell=1}^s
&&\sum_{\widetilde{\mu} \in M(s,s+t-1)}
&&(-1)^{\sum_{k=1}^{\ell-1}(\widetilde{\mu}(k)-k+1)+\sum_{k=\ell+1}^s(\widetilde{\mu}(k)-k+1)}
&\langle\boldsymbol{d}\rangle,
\end{alignat*}
where $\langle\boldsymbol{d}\rangle=\langle d_1,\ldots,d_{s+t-1}\rangle$ and
\begin{align*}
d_j
&=\begin{cases}
c_j & \text{if $j<i$,} \\
c_{j+1} & \text{if $j\geq i$,}
\end{cases} \\
&=\begin{cases}
a_{\lfloor j;\widetilde{\mu}\rfloor}b_{j-\lfloor j;\widetilde{\mu}\rfloor}
& \text{if $\lfloor j;\widetilde{\mu}\rfloor\leq\ell-1$,} \\
a_{\lfloor j;\widetilde{\mu}\rfloor}b_{j-\lfloor j;\widetilde{\mu}\rfloor+1}
& \text{if $\lfloor j;\widetilde{\mu}\rfloor\geq\ell$.}
\end{cases}
\end{align*}
This implies that $\eqref{eq24-4}=-\eqref{eq24-2}$.

As the consequence, we have
\begin{align*}
\partial(\langle\langle\boldsymbol{a}\rangle\langle\boldsymbol{b}\rangle\rangle)
&=\eqref{eq1'}+\eqref{eq3'}+\eqref{eq24'} \\
&=\eqref{eq1'}+\eqref{eq3'}+\eqref{eq24-1}+\eqref{eq24-2}+\eqref{eq24-3}+\eqref{eq24-4} \\
&=\eqref{eq1}+\eqref{eq3}+\eqref{eq2}+\eqref{eq24-2}+\eqref{eq4}-\eqref{eq24-2} \\
&=\eqref{eq1}+\eqref{eq2}+\eqref{eq3}+\eqref{eq4}.
\end{align*}
\end{proof}

Put $C_n(X)_Y=P_n(X)_Y/D_n(X)_Y$.
For an abelian group $A$, we define the chain and cochain complexes by
\begin{align*}
&C_n(X;A)_Y=C_n (X)_Y \otimes A,
&& \partial_n=\partial_n\otimes\mathrm{id} \text{ and} \\
&C^n(X;A)_Y=\operatorname{Hom}(C_n(X)_Y,A),
&& \delta^n(f)=f\circ\partial_{n+1}.
\end{align*}
Let $C_*(X;A)_Y=(C_n(X;A)_Y,\partial_n)$ and $C^*(X;A)_Y=(C^n(X;A)_Y,\delta^n)$.

\begin{definition}
The \textit{$n$th homology group} $H_n(X;A)_Y$ and \textit{$n$th cohomology group} $H^n(X;A)_Y$ of the multiple conjugation biquandle $X$ and the $X$-set $Y$ with coefficient group $A$ are defined by
\begin{align*}
&H_n(X;A)_Y=H_n(C_*(X;A)_Y)
&&\text{and}
&&H^n(X;A)_Y=H^n(C^*(X;A)_Y).
\end{align*}
Note that we omit the coefficient group $A$ if $A=\mathbb{Z}$ as usual, and we omit $Y$ if $Y$ is a trivial $X$-set.
\end{definition}

\section{Cocycles of multiple conjugation biquandles} \label{sect:cocycle}
Let $X$ be a biquandle.
As in Example~\ref{prop:Z-family}, we can regard $X$ as a $\mathbb{Z}$-family of biquandles with the parallel operations $\uline{*}^{[n]}$ and $\oline{*}^{[n]}$.
Then as in Proposition~\ref{prop:G-family2MCB}, we have the associated multiple conjugation biquandle $X\times\mathbb{Z}=\bigsqcup_{x\in X}\{x\}\times\mathbb{Z}$.
For an $X$-set $Y$, we define $y*^{[n]}x\in Y$ for $y\in Y$, $x\in X$, $n\in\mathbb{Z}$ by the following rule:
\begin{align*}
&y*^{[0]}x=y,
&&y*^{[1]}x=y*x,
&&y*^{[i+j]}x=(y*^{[i]}x)*^{[j]}(x\uline{*}^{[i]}x).
\end{align*}
In particular, when $n$ is a positive integer,
\[ y*^{[n]}x=(\cdots(((y*x)*(x\uline{*}^{[1]}x))*(x\uline{*}^{[2]}x)*\cdots)*(x\uline{*}^{[n-1]}x). \]
Then an $X$-set $Y$ is regarded as an $(X\times\mathbb{Z})$-set by $y*(x,n)=y*^{[n]}x$.
Conversely an $(X\times\mathbb{Z})$-set $Y$ is regarded as an $X$-set by $y*x=y*(x,0)$.

\begin{definition}
The \textit{type} of a biquandle $X$ equipped with an $X$-set $Y$ is defined by
\[
\operatorname{type}X_Y
=\min \left\{ n>0  ~\bigg|~
\begin{array}{l}\text{$a\uline{*}^{[n]}b=a=a\oline{*}^{[n]}b$ ($\forall a,b\in X$)}, \\
\text{$y*^{[n]}a=y$ ($\forall y\in Y$, $\forall a\in X$)}
\end{array}
 \right\}.
\]
\end{definition}

Let $X$ be a biquandle and $Y$ an $X$-set.
Again, as in Example~\ref{prop:Z-family} and Proposition~\ref{prop:G-family2MCB}, we can regard $X$ as a $\mathbb{Z}_{\operatorname{type}X_Y}$-family of biquandles with the parallel operations $\uline{*}^{[n]}$ and $\oline{*}^{[n]}$, and we have the associated multiple conjugation biquandle $X\times\mathbb{Z}_{\operatorname{type}X_Y}=\bigsqcup_{x\in X}\{x\}\times\mathbb{Z}_{\operatorname{type}X_Y}$. Suppose that $\operatorname{type}X_Y$ is finite.

Let $A$ be an abelian group.

A {\it biquandle $n$-cocycle} $\theta\in C^2_{\rm BQ}(X;A)_Y$ (cf. \cite{CarterElhamdadiSaito04, LebedVendramin17}) is the linear extension $\theta: \mathbb Z(Y\times X^n) \to  A$ of a map  
$\theta: Y\times X^n \to  A$ satisfying the following cocycle conditions:
\begin{itemize}
\item  For any $x_1, \ldots , x_n \in X$ and $y\in Y$, if $x_{i}=x_{i+1}$ for some $i\in \{1,\ldots ,n-1\}$, then  $\theta(\langle y \rangle \langle x_1\rangle \cdots \langle x_n \rangle)=0$. 
\item For any $x_1, \ldots , x_n \in X$ and $y\in Y$, 
$$
\begin{array}{l}
\theta\bigg( ~\displaystyle \sum_{i=1}^n (-1)^i \Big\{ \langle y \rangle \langle x_1 \rangle \cdots \langle x_{i-1} \rangle \langle x_{i+1} \rangle\cdots  \langle x_n \rangle  \\
\hspace{2.3cm}- \langle y \rangle \langle x_1 \rangle \cdots \langle x_{i-1} \rangle\uline{*}x_i \oline{*}x_i \langle x_{i+1} \rangle\cdots  \langle x_n \rangle  \Big\} \bigg)
=0.
\end{array}
$$
\end{itemize}

Next proposition shows that when we have a biquandle $2$-cocycle $\theta\in C^2_{\rm BQ}(X;A)_Y$, we can construct a $2$-cocycle $\widetilde{\theta}$ of the associated multiple conjugation biquandle $X\times\mathbb{Z}_{\operatorname{type}X_Y}$. 

\begin{proposition}
For a biquandle $2$-cocycle $\theta\in C^2_{\rm BQ}(X;A)_Y$, we define
$\widetilde{\theta}:P_2(X\times\mathbb{Z}_{\operatorname{type}X_Y})_Y\to A$ by
\begin{align*}
\widetilde{\theta}(\langle y\rangle\langle a\rangle\langle b\rangle)
=\sum_{i=0}^{i_1-1}\sum_{j=0}^{i_2-1}\theta(
\langle y*^{[i]}x_1\rangle\langle x_1\oline{*}^{[i]}x_1\rangle\langle x_2\oline{*}^{[i]}x_1\rangle\uline{*}^{[j]}(x_2\oline{*}^{[i]}x_1))
\end{align*}
for $a =(x_1,i_1),b=(x_2,i_2)\in X\times\mathbb{Z}_{\operatorname{type}X_Y}$, and by
\begin{align}
\widetilde{\theta}(\langle y\rangle\langle a,b\rangle)=0 \label{eq:7-2cycS2}
\end{align}
for $a,b\in X\times\mathbb{Z}_{\operatorname{type}X_Y}$.
Suppose that, for any  $x_1,x_2 \in X$ and $y \in Y$,
\begin{align*}
& \sum_{i=0}^{\operatorname{type}X_Y-1}\theta(
\langle y*^{[i]}x_1\rangle\langle x_1\oline{*}^{[i]}x_1\rangle\langle x_2\oline{*}^{[i]}x_1\rangle)=0, \\
& \sum_{j=0}^{\operatorname{type}X_Y-1}\theta(
\langle y*^{[j]}x_2\rangle\langle x_1\uline{*}^{[j]}x_2\rangle\langle x_2\uline{*}^{[j]}x_2\rangle)=0.
\end{align*}
Then $\widetilde{\theta}\in C^2(X\times\mathbb{Z}_{\operatorname{type}X_Y};A)_Y$, i.e., it is a $2$-cocycle of the associated multiple conjugation biquandle $X\times\mathbb{Z}_{\operatorname{type}X_Y}$.
\end{proposition}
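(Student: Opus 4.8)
The plan is to check three things in turn: (i) that $\widetilde{\theta}$ is well defined on the generators of $P_2(X\times\mathbb{Z}_{\operatorname{type}X_Y})_Y$, which is immediate once we fix, for each $\mathbb{Z}_{\operatorname{type}X_Y}$-component, the representative in $\{0,1,\dots,\operatorname{type}X_Y-1\}$; (ii) that $\widetilde{\theta}$ vanishes on the degenerate submodule $D_2(X\times\mathbb{Z}_{\operatorname{type}X_Y})_Y$, so that it descends to a cochain on $C_2(X\times\mathbb{Z}_{\operatorname{type}X_Y})_Y$; and (iii) that $\widetilde{\theta}\circ\partial_3=0$. It helps to keep the geometric picture in mind: a semi-arc colored by $(x,i)$ is a bundle of $i$ parallel $X$-colored strands, $\widetilde{\theta}(\langle y\rangle\langle a\rangle\langle b\rangle)$ is the sum of the biquandle weights $\theta$ over the $i_1\times i_2$ array of crossings produced when the $a$-bundle passes over the $b$-bundle, and $\widetilde{\theta}$ is declared to vanish on every chain coming from a bracket of length $\ge2$.

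For (ii), $D_2(X\times\mathbb{Z}_{\operatorname{type}X_Y})_Y$ is generated by $\langle y\rangle\langle a\rangle\langle b\rangle-\langle y\rangle\langle a,ab\rangle+\langle y\rangle\langle b,ab\rangle$ with $a=(x,i_1)$ and $b=(x,i_2)$ in a common group $G_x$; since $\widetilde{\theta}$ kills the last two terms by \eqref{eq:7-2cycS2}, it suffices to show $\widetilde{\theta}(\langle y\rangle\langle a\rangle\langle b\rangle)=0$ when the $X$-components agree. For this one uses $x\uline{*}^{[i]}x=x\oline{*}^{[i]}x$ and $(x\uline{*}^{[i]}x)\uline{*}^{[j]}(x\uline{*}^{[i]}x)=x\uline{*}^{[i+j]}x$, which follow (the first by induction on $i$) from the recursion defining the parallel operations in Example~\ref{prop:Z-family} together with $x\uline{*}x=x\oline{*}x$: they make the second and third $X$-entries of the argument of $\theta$ in every summand coincide, so each summand is $0$ by the first (degeneracy) condition on a biquandle cocycle. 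Hence $\widetilde{\theta}$ descends to $C_2(X\times\mathbb{Z}_{\operatorname{type}X_Y})_Y$.

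For (iii), since $\widetilde{\theta}$ is now a cochain on $C_2$, it is enough to check $\widetilde{\theta}(\partial_3 z)=0$ for $z$ running over the four types of generators of $P_3(X\times\mathbb{Z}_{\operatorname{type}X_Y})_Y$, namely $\langle y\rangle\langle x_1\rangle\langle x_2\rangle\langle x_3\rangle$, $\langle y\rangle\langle x_1\rangle\langle x_2,x_3\rangle$, $\langle y\rangle\langle x_1,x_2\rangle\langle x_3\rangle$, $\langle y\rangle\langle x_1,x_2,x_3\rangle$, using the explicit formulas for $\partial_3$ recalled in Section~\ref{sect:chaincomplex}. For $z=\langle y\rangle\langle x_1,x_2,x_3\rangle$ every term of $\partial_3 z$ is a single bracket of length two, so $\widetilde{\theta}(\partial_3 z)=0$ by \eqref{eq:7-2cycS2}. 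For the two mixed types, the length-two brackets in $\partial_3 z$ are again killed by \eqref{eq:7-2cycS2}, leaving three terms of the form $\langle\,\cdot\,\rangle\langle a\rangle\langle b\rangle$; writing $x_k=(w_k,m_k)$ and expanding by the definition of $\widetilde{\theta}$, the sums over the cable indices telescope, the matching of layers amounting to the fact that the first $m$ strands of a full $\operatorname{type}X_Y$-cable, and its top $\operatorname{type}X_Y-m$ strands shifted down, reproduce the iterated operations $\oline{*}^{[m]}$ entering $\widetilde{\partial}$. Where the $\mathbb{Z}_{\operatorname{type}X_Y}$-component of $x_1^{-1}x_2$ (in the $\widetilde{\partial}$ part of $\partial_3$) has to be reduced modulo $\operatorname{type}X_Y$, the telescoping leaves over a full-period sum which, after rewriting by the $X$-set and biquandle identities — in particular $(y*^{[i]}p)*^{[j]}(q\oline{*}^{[i]}p)=(y*^{[j]}q)*^{[i]}(p\uline{*}^{[j]}q)$, which pulls the $j$-cabling out of the $i$-summation, and the second $G$-family biquandle axiom — coincides with one of the two displayed hypotheses and so vanishes. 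Finally, for $z=\langle y\rangle\langle x_1\rangle\langle x_2\rangle\langle x_3\rangle$ no such reduction occurs (the $\mathbb{Z}_{\operatorname{type}X_Y}$-components appearing in $\partial_3 z$ are among $m_1,m_2,m_3$, with no sum or difference taken, because these components are inert under $\uline{*}$ and $\oline{*}$ as $\mathbb{Z}_{\operatorname{type}X_Y}$ is abelian, so $h^{-1}gh=g$ in Proposition~\ref{prop:G-family2MCB}); expanding $\widetilde{\theta}(\partial_3 z)$ by the definition of $\widetilde{\theta}$ then gives a signed triple sum of values of $\theta$, and after a change of summation variable organizing the nested ranges $\{0,\dots,m_k-1\}$ into a single index set, the terms group slice by slice into instances of the second (cocycle) condition on $\theta$ and cancel.

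The main obstacle is this last computation, together with the parallel bookkeeping behind the telescoping in (iii) for the mixed generators: one must transcribe the prismatic boundary $\partial_3$, evaluated after the $\mathbb{Z}_{\operatorname{type}X_Y}$-cabling, term by term into a sum over triples of cable indices and then match it with the biquandle cocycle condition and with the two hypotheses, keeping the signs and the changes of summation variable straight — in the style of, and about as involved as, the proof above that $D_*(X)_Y$ is a subcomplex. Conceptually nothing deeper is happening: a sum of biquandle Boltzmann weights is invariant under the cabled Reidemeister moves, and the two hypotheses record exactly the obstruction coming from a cable closing up after $\operatorname{type}X_Y$ strands.
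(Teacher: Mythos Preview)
Your proposal is correct and follows the same overall structure as the paper's proof: check well-definedness, vanishing on $D_2$, and $\widetilde{\theta}\circ\partial_3=0$ on the four types of $P_3$-generators, with the same case-by-case reasoning (degeneracy of $\theta$ for the $D_2$ generators, \eqref{eq:7-2cycS2} for the length-$\ge 2$ brackets, the biquandle $2$-cocycle condition for $\langle y\rangle\langle a\rangle\langle b\rangle\langle c\rangle$, and a cabling/telescoping argument for the mixed types).

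The one organizational difference worth noting is where the two displayed hypotheses enter. The paper uses them \emph{up front} for well-definedness: it allows arbitrary positive integer representatives of $i_1,i_2\in\mathbb{Z}_{\operatorname{type}X_Y}$ in the defining double sum, and the two hypotheses say precisely that shifting $i_1$ or $i_2$ by $\operatorname{type}X_Y$ does not change the value. With this in hand, the paper can then say that on the mixed generators $\partial_3(\langle y\rangle\langle a,b\rangle\langle c\rangle)$ and $\partial_3(\langle y\rangle\langle a\rangle\langle b,c\rangle)$ the map $\widetilde{\theta}$ vanishes ``without using any assumption on $\theta$'', because one is free to pick representatives making the telescoping exact (e.g.\ choose $m_2\ge m_1$ so that $m_2-m_1$ is already the canonical representative). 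You instead fix the representatives $\{0,\dots,\operatorname{type}X_Y-1\}$ from the start, which makes step~(i) trivial but forces you to invoke the hypotheses later, when the $\mathbb{Z}_{\operatorname{type}X_Y}$-component of $x_1^{-1}x_2$ wraps around. The two bookkeepings are equivalent; the paper's is slightly cleaner in that the hypotheses are used once and for all, rather than reappearing case by case.
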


\begin{proof}
The last two equalities imply the well-definedness of the map $\widetilde{\theta}$.
(It does not depend on the choice of positive integers $i_1$ and $i_2$ representing elements $i_1$ and $i_2$ of $\mathbb{Z}_{\operatorname{type}X_Y}$).
It is sufficient to verify that $\widetilde{\theta}$ vanishes on the following chains:
\begin{align}
& \partial_3(\langle y\rangle\langle a\rangle\langle b\rangle\langle c\rangle), \label{eq:7-2cycP1} \\
& \partial_3(\langle y\rangle\langle a\rangle\langle b,c\rangle),
\partial_3(\langle y\rangle\langle a,b\rangle\langle c\rangle), \label{eq:7-2cycP3} \\
& \partial_3(\langle y\rangle\langle a,b,c\rangle), \label{eq:7-2cycP4} \\
&\langle y\rangle\langle a\rangle\langle b\rangle
-\langle y\rangle\langle a,ab\rangle
+\langle y\rangle\langle b,ab\rangle \label{eq:7-2cycP5}
\end{align}
for all $y\in Y$, $a,b,c\in X\times\mathbb{Z}_{\operatorname{type}X_Y}$ whenever the multiplication is defined.

For chains in \eqref{eq:7-2cycP1}, it is seen that $\widetilde{\theta}$ vanishes using the fact that $\theta$ is a biquandle 2-cocycle.
On chains in 
\eqref{eq:7-2cycP3}, we see that $\widetilde{\theta}$ vanishes without using any assumption on $\theta$.
These are observed by considering the geometric meaning of parallel biquandle operations, see Figure~\ref{fig:parallel} (see also \cite[Proposition~6.2 and Section~7]{IshiiIwakiri12}).
By \eqref{eq:7-2cycS2}, we see that $\widetilde{\theta}$ vanishes on chains in \eqref{eq:7-2cycP4}.
Since $\theta(y,x,x)=0$ for any $x,y\in X$, we see that
$\widetilde{\theta}(\langle y\rangle\langle a\rangle\langle b\rangle)=0$
if $a=(x,i_1)$ and $b=(x,i_2)$ for some $x\in X$ and $i_1,i_2\in\mathbb{Z}_{\operatorname{type}X_Y}$.
Thus,
$\widetilde{\theta}(\langle y\rangle\langle a\rangle\langle b\rangle)=0$.
By \eqref{eq:7-2cycS2},
$\widetilde{\theta}(\langle y\rangle\langle a,ab\rangle)
=\widetilde{\theta}(\langle y\rangle\langle b,ab\rangle)=0$,
and we see that $\widetilde{\theta}$ vanishes on chains in \eqref{eq:7-2cycP5}.
\end{proof}

\begin{figure}
\begin{minipage}{70pt}
\begin{picture}(70,70)(-20,-15)
 \put(-20,-15){\framebox(70,70){}}
 \put(40,40){\vector(-1,-1){40}}
 \put(0,40){\line(1,-1){18}}
 \put(22,18){\vector(1,-1){18}}
 \put(0,43){\makebox(0,0)[b]{\footnotesize$(x_1,i_1)$}}
 \put(0,-3){\makebox(0,0)[t]{\footnotesize$(x_2,i_2)$}}
 \put(-5,14){\framebox(12,12){\normalsize$y$}}
\end{picture}
\end{minipage}
\begin{minipage}{270pt}
\begin{picture}(270,180)(-30,-20)
 \put(140,140){\vector(-1,-1){140}}
 \put(170,140){\vector(-1,-1){140}}
 \multiput(172.5,127.5)(5,-5){3}{\circle*{2}}
 \put(220,140){\vector(-1,-1){140}}
 \put(0,140){\line(1,-1){67.5}}
 \put(30,140){\line(1,-1){52.5}}
 \multiput(77.5,127.5)(-5,-5){3}{\circle*{2}}
 \put(80,140){\line(1,-1){27.5}}
 \put(72.5,67.5){\line(1,-1){10}}
 \put(87.5,82.5){\line(1,-1){10}}
 \put(112.5,107.5){\line(1,-1){10}}
 \put(87.5,52.5){\line(1,-1){20}}
 \put(102.5,67.5){\line(1,-1){20}}
 \put(127.5,92.5){\line(1,-1){20}}
 \put(112.5,27.5){\vector(1,-1){27.5}}
 \put(127.5,42.5){\vector(1,-1){42.5}}
 \put(152.5,67.5){\vector(1,-1){67.5}}
 \put(0,60){\framebox(20,20){$y$}}
 \put(-10,145){\makebox(0,0)[b]{\footnotesize$x_1\!\uline{*}^{[0]}\!x_1$}}
 \put(30,145){\makebox(0,0)[b]{\footnotesize$x_1\!\uline{*}^{[1]}\!x_1$}}
 \put(80,145){\makebox(0,0)[b]{\footnotesize$x_1\!\uline{*}^{[i_1-1]}\!x_1$}}
 \put(-10,-5){\makebox(0,0)[t]{\footnotesize$x_2\!\uline{*}^{[0]}\!x_2$}}
 \put(30,-5){\makebox(0,0)[t]{\footnotesize$x_2\!\uline{*}^{[1]}\!x_2$}}
 \put(80,-5){\makebox(0,0)[t]{\footnotesize$x_2\!\uline{*}^{[i_2-1]}\!x_2$}}
\end{picture}
\end{minipage}
\vspace{5mm}\\
\begin{minipage}{70pt}
\begin{picture}(70,70)(-10,-15)
 \put(-10,-15){\framebox(75,70){}}
 \put(20,20){\vector(0,-1){20}}
 \put(0,40){\vector(1,-1){20}}
 \put(40,40){\vector(-1,-1){20}}
 \put(0,43){\makebox(0,0)[b]{\footnotesize$(x,i)$}}
 \put(40,43){\makebox(0,0)[b]{\footnotesize$(x\!\uline{*}^{[i]}\!x,j\!-\!i)$}}
 \put(20,-3){\makebox(0,0)[t]{\footnotesize$(x,j)$}}
 \put(-5,0){\framebox(12,12){\normalsize$y$}}
\end{picture}
\end{minipage}
\begin{minipage}{270pt}
\begin{picture}(270,180)(-30,-20)
 \put(140,140){\line(-1,-1){25}}
 \put(170,140){\line(-1,-1){45}}
 \multiput(172.5,127.5)(5,-5){3}{\circle*{2}}
 \put(220,140){\line(-1,-1){65}}
 \put(0,140){\line(1,-1){65}}
 \put(30,140){\line(1,-1){45}}
 \multiput(77.5,127.5)(-5,-5){3}{\circle*{2}}
 \put(80,140){\line(1,-1){25}}
 \put(65,75){\vector(0,-1){75}}
 \put(75,95){\vector(0,-1){95}}
 \multiput(85,40)(5,0){3}{\circle*{2}}
 \put(105,115){\vector(0,-1){115}}
 \put(115,115){\vector(0,-1){115}}
 \put(125,95){\vector(0,-1){95}}
 \multiput(135,40)(5,0){3}{\circle*{2}}
 \put(155,75){\vector(0,-1){75}}
 \put(0,60){\framebox(20,20){$y$}}
 \put(-10,145){\makebox(0,0)[b]{\footnotesize$x\!\uline{*}^{[0]}\!x$}}
 \put(30,145){\makebox(0,0)[b]{\footnotesize$x\!\uline{*}^{[1]}\!x$}}
 \put(80,145){\makebox(0,0)[b]{\footnotesize$x\!\uline{*}^{[i-1]}\!x$}}
 \put(140,145){\makebox(0,0)[b]{\footnotesize$x\!\uline{*}^{[i]}\!x$}}
 \put(175,145){\makebox(0,0)[b]{\footnotesize$x\!\uline{*}^{[i+1]}\!x$}}
 \put(220,145){\makebox(0,0)[b]{\footnotesize$x\!\uline{*}^{[j-1]}\!x$}}
 \put(55,-5){\makebox(0,0)[t]{\footnotesize$x\!\uline{*}^{[0]}\!x$}}
 \put(85,-5){\makebox(0,0)[t]{\footnotesize$x\!\uline{*}^{[1]}\!x$}}
 \put(155,-5){\makebox(0,0)[t]{\footnotesize$x\!\uline{*}^{[j-1]}\!x$}}
\end{picture}
\end{minipage}
\caption{A geometric interpretation of the parallel biquandle operations.}
\label{fig:parallel}
\end{figure}
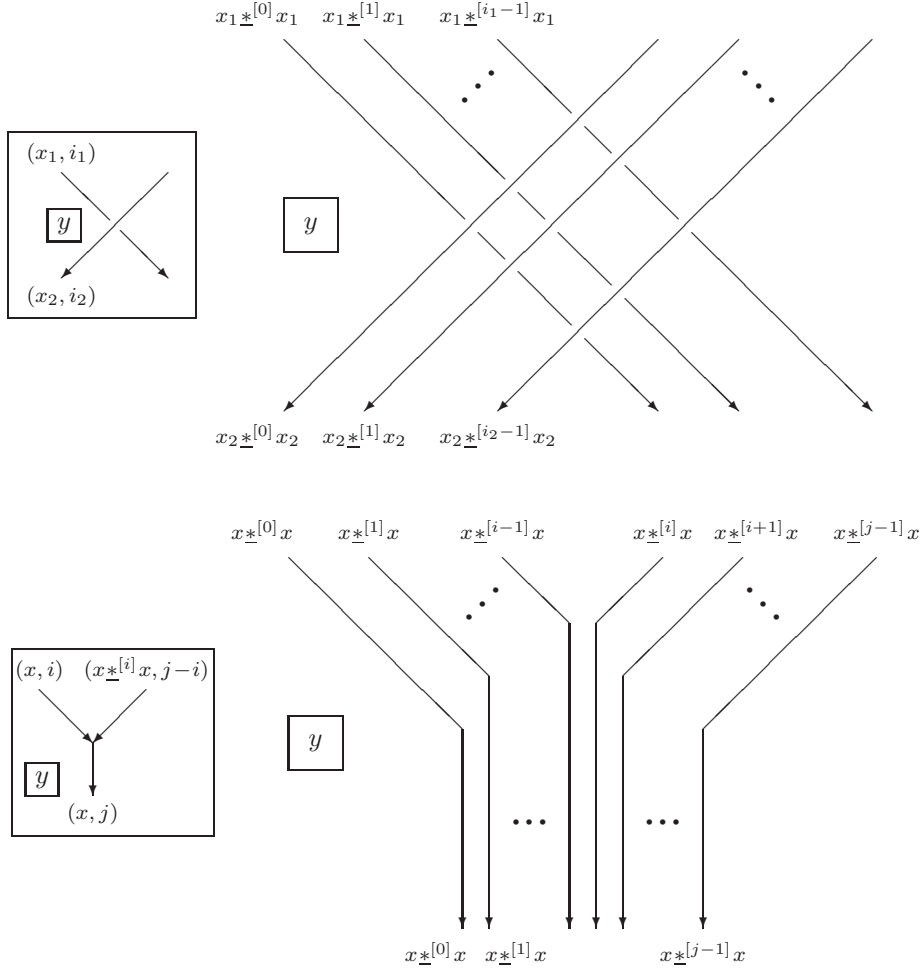

Next proposition shows that when we have a biquandle $3$-cocycle $\theta\in C^3_{\rm BQ}(X;A)_Y$, we can construct a $3$-cocycle $\widetilde{\theta}$ of the associated multiple conjugation biquandle $X\times\mathbb{Z}_{\operatorname{type}X_Y}$.
\begin{proposition}
For a biquandle $3$-cocycle $\theta\in C^3_{\rm BQ}(X;A)_Y$, we define
$\widetilde{\theta}:P_3(X\times\mathbb{Z}_{\operatorname{type}X_Y})_Y\to A$
by
\begin{align*}
\widetilde{\theta}(\langle y\rangle\langle a\rangle\langle b\rangle\langle c\rangle)
&=\sum_{i=0}^{i_1-1}\sum_{j=0}^{i_2-1}\sum_{k=0}^{i_3-1}\theta(
(\langle y*^{[i]}x_1\rangle\langle x_1\oline{*}^{[i]}x_1\rangle\langle x_2\oline{*}^{[i]}x_1\rangle\langle x_3\oline{*}^{[i]}x_1\rangle \\
&\hspace{7em}\uline{*}^{[j]}(x_2\oline{*}^{[i]}x_1))
\uline{*}^{[k]}((x_3\oline{*}^{[i]}x_1)\uline{*}^{[j]}(x_2\oline{*}^{[i]}x_1)))
\end{align*}
for $a=(x_1,i_1),b=(x_2,i_2),c=(x_3,i_3)\in X\times\mathbb{Z}_{\operatorname{type}X_Y}$, and by
\begin{align}
&\widetilde{\theta}(\langle y\rangle\langle a,b\rangle\langle c\rangle)=0,
&&\widetilde{\theta}(\langle y\rangle\langle a\rangle\langle b,c\rangle)=0,
&&\widetilde{\theta}(\langle y\rangle\langle a,b,c\rangle)=0 \label{eq:7-3cycS2}
\end{align}
for $a,b,c\in X\times\mathbb{Z}_{\operatorname{type}X_Y}$.
Suppose that, for any $x_1,x_2,x_3\in X$ and $y\in Y$,
\begin{align*}
&\sum_{i=0}^{\operatorname{type}X_Y-1}\theta(
\langle y*^{[i]}x_1\rangle\langle x_1\oline{*}^{[i]}x_1\rangle\langle x_2\oline{*}^{[i]}x_1\rangle\langle x_3\oline{*}^{[i]}x_1\rangle)=0, \\
&\sum_{j=0}^{\operatorname{type}X_Y-1}\theta(
\langle y*^{[j]}x_2\rangle\langle x_1\uline{*}^{[j]}x_2\rangle\langle x_2\uline{*}^{[j]}x_2\rangle\langle x_3\uline{*}^{[j]}x_2\rangle)=0, \\
&\sum_{k=0}^{\operatorname{type}X_Y-1}\theta(
\langle y*^{[k]}x_3\rangle\langle x_1\uline{*}^{[k]}x_3\rangle\langle x_2\uline{*}^{[k]}x_3\rangle\langle x_3\uline{*}^{[k]}x_3\rangle)=0.
\end{align*}
Then $\widetilde{\theta}\in C^3(X\times\mathbb{Z}_{\operatorname{type}X_Y};A)_Y$, i.e., it is a $3$-cocycle of the associated multiple conjugation biquandle $X\times\mathbb{Z}_{\operatorname{type}X_Y}$.
\end{proposition}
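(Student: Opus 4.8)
The plan is to follow, one dimension higher, the proof of the preceding proposition. First, the three displayed period-sum hypotheses guarantee that $\widetilde{\theta}$ is well defined on $P_3(X\times\mathbb{Z}_{\operatorname{type}X_Y})_Y$, i.e.\ that the triple sum defining $\widetilde{\theta}(\langle y\rangle\langle a\rangle\langle b\rangle\langle c\rangle)$ does not depend on the positive-integer representatives of $i_1,i_2,i_3\in\mathbb{Z}_{\operatorname{type}X_Y}$: replacing $i_1$ by $i_1+\operatorname{type}X_Y$ changes $\widetilde{\theta}$ by a sum which, after rewriting the parallel operations by the $G$-family and $X$-set axioms (of the form $(x\oline{*}^gy)\uline{*}^h(z\oline{*}^gy)=(x\uline{*}^hz)\oline{*}^{h^{-1}gh}(y\uline{*}^hz)$ and its $X$-set analogue), becomes a full-period sum of the shape appearing in the first hypothesis and hence vanishes; the second and third hypotheses handle $i_2$ and $i_3$ in the same way. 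Since $C_3=P_3/D_3$, it then remains to check that $\widetilde{\theta}$ kills the degenerate submodule $D_3(X\times\mathbb{Z}_{\operatorname{type}X_Y})_Y$ and that $\widetilde{\theta}\circ\partial_4=0$.

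For the cocycle condition I would run through the generators of $P_4(X\times\mathbb{Z}_{\operatorname{type}X_Y})_Y$, organised by the partitions of $4$. The crucial simplification is that, by \eqref{eq:7-3cycS2}, $\widetilde{\theta}$ annihilates every chain $\langle y\rangle\langle\boldsymbol{u}_1\rangle\cdots\langle\boldsymbol{u}_m\rangle$ in which some bracket $\langle\boldsymbol{u}_\ell\rangle$ has length $\ge2$. Consequently, for the generators $\langle y\rangle\langle a,b,c,d\rangle$, $\langle y\rangle\langle a,b,c\rangle\langle d\rangle$, $\langle y\rangle\langle a\rangle\langle b,c,d\rangle$ and $\langle y\rangle\langle a,b\rangle\langle c,d\rangle$, every term of the boundary still contains a bracket of length $\ge2$, so $\widetilde{\theta}$ vanishes on $\partial_4$ of them outright. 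For the three ``mixed'' generators $\langle y\rangle\langle a,b\rangle\langle c\rangle\langle d\rangle$, $\langle y\rangle\langle a\rangle\langle b,c\rangle\langle d\rangle$ and $\langle y\rangle\langle a\rangle\langle b\rangle\langle c,d\rangle$, after discarding the length-$\ge2$ terms only a handful of length-one chains survive in $\partial_4$ — those produced by $\widetilde{\partial}$ collapsing the length-two bracket and by $\widehat{\partial}$ deleting one of its entries — and these cancel by a direct telescoping computation in the summation index attached to that slot, using the group law of $\mathbb{Z}_{\operatorname{type}X_Y}$ and the defining recursions $y*^{[i+j]}x=(y*^{[i]}x)*^{[j]}(x\uline{*}^{[i]}x)$ and $x\uline{*}^{[i+j]}y=(x\uline{*}^{[i]}y)\uline{*}^{[j]}(y\uline{*}^{[i]}y)$; no hypothesis on $\theta$ is needed here, exactly as in the treatment of \eqref{eq:7-2cycP3} above.

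The only generator that genuinely uses that $\theta$ is a biquandle $3$-cocycle is $\langle y\rangle\langle a\rangle\langle b\rangle\langle c\rangle\langle d\rangle$ with $a=(x_1,i_1),\dots,d=(x_4,i_4)$. Here every term of $\widetilde{\theta}\bigl(\partial_4(\langle y\rangle\langle a\rangle\langle b\rangle\langle c\rangle\langle d\rangle)\bigr)$ is again a triple sum of values of $\theta$ on length-one biquandle chains, and one reorganises the total, via the parallel blow-up of strands (the one-dimension-higher analogue of Figure~\ref{fig:parallel}), into a sum over the $i_1\times i_2\times i_3\times i_4$ array of biquandle-level configurations of the biquandle $3$-cocycle identity; each summand is $0$, the boundary layers of the array being controlled by the three period-sum hypotheses and the repeated-strand contributions by the degeneracy of $\theta$. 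Finally, for $D_3$: the generators of the third and fourth types listed in the example vanish term by term under $\widetilde{\theta}$, each of their terms carrying a bracket of length $\ge2$; for the first two types all terms but one have such a bracket, and the remaining term is $\pm\langle y\rangle\langle a\rangle\langle b\rangle\langle c\rangle$ (respectively $\pm\langle y\rangle\langle c\rangle\langle a\rangle\langle b\rangle$) with $a$ and $b$ lying in a common group $\{x_0\}\times\mathbb{Z}_{\operatorname{type}X_Y}$, so in each summand of $\widetilde{\theta}$ the two biquandle entries coming from $a$ and $b$ are equal parallel descendants of $x_0$ and $\theta$ vanishes by its degeneracy condition. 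The main obstacle is the array bookkeeping for $\langle y\rangle\langle a\rangle\langle b\rangle\langle c\rangle\langle d\rangle$ — writing down precisely the one-dimension-higher version of Figure~\ref{fig:parallel} and matching the rearranged sum with the biquandle $3$-cocycle relation — together with the routine but lengthy telescoping identities for the three mixed generators.
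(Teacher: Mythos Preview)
Your proposal is correct and follows essentially the same route as the paper: well-definedness from the three period-sum hypotheses, then case-by-case vanishing of $\widetilde{\theta}$ on the generators of $\partial_4(P_4)$ and of $D_3$, using \eqref{eq:7-3cycS2} to kill every term carrying a bracket of length $\ge 2$, the biquandle degeneracy $\theta(y,x,x,x')=\theta(y,x,x',x')=0$ for the surviving length-one term in the first two $D_3$-generators, and the biquandle $3$-cocycle identity (via the parallel blow-up picture) for $\langle y\rangle\langle a\rangle\langle b\rangle\langle c\rangle\langle d\rangle$. One small correction: in that last case the $i_1\times i_2\times i_3\times i_4$ array argument uses \emph{only} the biquandle $3$-cocycle relation --- internal faces cancel and the outer faces reproduce exactly $\widetilde{\theta}\bigl(\partial_4(\langle y\rangle\langle a\rangle\langle b\rangle\langle c\rangle\langle d\rangle)\bigr)$ --- so neither the period-sum hypotheses nor the degeneracy of $\theta$ enter there (they are used, respectively, for well-definedness and for the $D_3$-check).
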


\begin{proof}
The last three equalities imply the well-definedness of the map $\widetilde{\theta}$.
(It does not depend on the choice of positive integers $i_1$, $i_2$ and $i_3$ representing elements $i_1$, $i_2$ and $i_3$ of $\mathbb{Z}_{\operatorname{type}X_Y}$).
It is sufficient to verify that $\widetilde{\theta}$ vanishes on the following chains:
\begin{align} 
&\partial_4(\langle y\rangle\langle a\rangle\langle b\rangle\langle c\rangle\langle d \rangle), \label{eq:7-3cycP1} \\
&\partial_4(\langle y\rangle\langle a\rangle\langle b\rangle\langle c,d\rangle),
\partial_4(\langle y\rangle\langle a\rangle\langle b,c\rangle \langle d\rangle),
\partial_4(\langle y\rangle\langle a,b\rangle\langle c\rangle \langle d\rangle),
\partial_4(\langle y\rangle\langle a,b\rangle\langle c,d\rangle), \label{eq:7-3cycP5} \\
&\partial_4(\langle y\rangle\langle a\rangle\langle b,c,d\rangle),
\partial_4(\langle y\rangle\langle a,b,c\rangle\langle d\rangle),
\partial_4(\langle y\rangle\langle a,b,c,d\rangle), \label{eq:7-3cycP8} \\
&\langle y\rangle\langle a\rangle\langle b\rangle\langle c\rangle
-\langle y\rangle\langle a,ab\rangle\langle c\rangle
+\langle y\rangle\langle b,ab\rangle\langle c\rangle, \label{eq:7-3cycP9} \\
&\langle y\rangle\langle a\rangle\langle b\rangle\langle c\rangle
-\langle y\rangle\langle a\rangle\langle b,bc\rangle
+\langle y\rangle\langle a\rangle\langle c,bc\rangle, \label{eq:7-3cycP10} \\
&\langle y\rangle\langle a,b\rangle\langle c\rangle
-\langle y\rangle\langle a,b,bc \rangle
+\langle y\rangle\langle a,ac,bc \rangle
-\langle y\rangle\langle c,ac,bc \rangle, \label{eq:7-3cycP11} \\
&\langle y\rangle\langle a\rangle\langle b,c\rangle
-\langle y\rangle\langle a,ab,ac \rangle
+\langle y\rangle\langle b,ab,ac \rangle
-\langle y\rangle\langle b,c,ac\rangle \label{eq:7-3cycP12}
\end{align}
for all $y\in Y$ and $a,b,c,d\in X\times\mathbb{Z}_{\operatorname{type}X_Y}$ whenever the multiplication is defined.

For chains in \eqref{eq:7-3cycP1}, it is seen that $\widetilde{\theta}$ vanishes using the fact that $\theta$ is a biquandle 3-cocycle.
On chains in 
\eqref{eq:7-3cycP5}, we see that $\widetilde{\theta}$ vanishes without using any assumption on $\theta$.
These are observed by considering the geometric meaning of parallel biquandle operations (cf.~\cite[Proposition~6.2 and Sections~7 and 9]{IshiiIwakiri12}).
By \eqref{eq:7-3cycS2}, we see that $\widetilde{\theta}$ vanishes on chains in 
\eqref{eq:7-3cycP8} and on chains in \eqref{eq:7-3cycP11} and \eqref{eq:7-3cycP12}.
Since $\theta(y,x,x,x')=\theta(y,x,x',x')=0$ and by \eqref{eq:7-3cycS2}, we see that $\widetilde{\theta}$ vanishes on chains in \eqref{eq:7-3cycP9} and \eqref{eq:7-3cycP10}.
\end{proof}

Let $G$ be a group with identity $e$, and $\varphi:G\to Z(G)$ a homomorphism, where $Z(G)$ is the center of $G$.
For a ring $R$, let $X$ be a right $R[G]$-module, where $R[G]$ is the group ring of $G$ over $R$.
We define binary operations $\uline{*}^g,\oline{*}^g:X\times X\to X$ by $x\uline{*}^gy=xg+y(\varphi(g)-g)$ and $x\oline{*}^gy=x\varphi(g)$.
Then, as shown in Example~\ref{exam:2.5}, $X$ is a $G$-family of biquandles, which we call a $G$-family of Alexander biquandles.
As in Proposition~\ref{prop:G-family2MCB}, we have the associated multiple conjugation biquandle $X\times G=\bigsqcup_{x\in X}\{x\}\times G$.
For an abelian group $A$, let $\lambda:G\to A$ be a homomorphism and $f:X^n\to A$ a $G$-invariant $A$-multilinear map, which is an $A$-multilinear map satisfying $f(x_1g,\ldots,x_ng)=f(x_1,\ldots,x_n)$ for any $g\in G$.

\begin{proposition} \label{prop:cocycle}
Let $n=2$.
\begin{itemize}
\item[(1)]
Define $\Phi_f:C_2(X\times G)\to A$ by
\begin{align*}
&\Phi_f(\langle(x_1,g_1)\rangle\langle(x_2,g_2)\rangle)
=\lambda(g_1)f(x_1-x_2,x_2(1-\varphi(g_2)g_2^{-1})), \text{ and} \\
&\Phi_f(\langle(x_1,g_1),(x_2,g_2)\rangle)=0
\end{align*}
for $(x_1,g_1),(x_2,g_2)\in X\times G$.
Then $\Phi_f $ is a $2$-cocycle of the associated multiple conjugation biquandle $X\times G$.
\item[(2)]
Define $\Phi_f:C_2(X\times G)_{X\times G}\to A$ by
\begin{align*}
&\Phi_f(\langle(x,g)\rangle\langle(x_1,g_1)\rangle\langle(x_2,g_2)\rangle) \\
&=\lambda(g)f((x-x_1)(1-\varphi(g_1)^{-1}g_1),x_1-x_2,x_2(1-\varphi(g_2)g_2^{-1})), \text{ and} \\
&\Phi_f(\langle(x,g)\rangle\langle(x_1,g_1),(x_2,g_2)\rangle)=0
\end{align*}
for $(x,g),(x_1,g_1),(x_2,g_2)\in X\times G$.
Then $\Phi_f $ is a $2$-cocycle of the associated multiple conjugation biquandle $X\times G$ and itself as an $(X\times G)$-set with the map $*=\uline{*}$.
\item[(2)']
Define $\Phi_f:C_2(X\times G)_X\to A$ by
\begin{align*}
&\Phi_f(\langle x\rangle\langle(x_1,g_1)\rangle\langle(x_2,g_2)\rangle) \\
&=f((x-x_1)(1-\varphi(g_1)^{-1}g_1),x_1-x_2,x_2(1-\varphi(g_2)g_2^{-1})), \text{ and} \\
&\Phi_f(\langle x\rangle\langle(x_1,g_1),(x_2,g_2)\rangle)=0
\end{align*}
for $x\in X$ and $(x_1,g_1),(x_2,g_2)\in X\times G$.
Then $\Phi_f $ is a $2$-cocycle of the associated multiple conjugation biquandle $X\times G$ and  $X$ as an $(X\times G)$-set with the map $*$ such that $y*(x,g)=y\uline{*}^gx$.
\end{itemize}
\end{proposition}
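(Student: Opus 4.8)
The plan is to verify directly, for each of (1), (2), and (2)$'$, the two requirements that characterise a $2$-cocycle of a multiple conjugation biquandle: that $\Phi_f$ is well defined on $C_2=P_2/D_2$ (equivalently, that it annihilates the degenerate submodule $D_2$), and that $\delta^2\Phi_f=\Phi_f\circ\partial_3$ is the zero homomorphism. Throughout, the groups of the multiple conjugation biquandle $X\times G$ coming from Proposition~\ref{prop:G-family2MCB} are the sets $\{x\}\times G$ with $(x,g)(x,h)=(x,gh)$, so ``two elements lie in the same group'' means they have equal first coordinate. First I would dispose of well-definedness: by construction $\Phi_f$ vanishes on every generator that carries a bracket of length $\ge 2$, so the only generators of $D_2$ left to inspect are $\langle y\rangle\langle a\rangle\langle b\rangle-\langle y\rangle\langle a,ab\rangle+\langle y\rangle\langle b,ab\rangle$ with $a=(x,g_1)$ and $b=(x,g_2)$ in a common group; there the two length-$2$ terms are killed, and the remaining term contains a factor $f(x-x,\ldots)=0$ by $A$-multilinearity. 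Hence $\Phi_f$ descends to $C_2$.

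Next, since $P_3$ is generated by the four types $\langle y\rangle\langle a\rangle\langle b\rangle\langle c\rangle$, $\langle y\rangle\langle a\rangle\langle b,c\rangle$, $\langle y\rangle\langle a,b\rangle\langle c\rangle$, $\langle y\rangle\langle a,b,c\rangle$, and $\partial_3$ sends $D_3$ into $D_2$ (the subcomplex property proved above), it suffices to check that $\Phi_f\circ\partial_3$ vanishes on those four generators, and for this I would use the explicit boundary formulas recorded in the Example of Section~\ref{sect:chaincomplex}. For the three generators involving a bracket of length $\ge 2$ the bookkeeping is light: after applying $\partial_3$, each resulting term either again carries a bracket of length $\ge 2$, on which $\Phi_f$ is zero by definition, or has the shape $\langle y'\rangle\langle(x,g)\rangle\langle(x,g')\rangle$ arising from splitting a product $(x,gg')$; the surviving such terms cancel in pairs, using only that $\lambda$ is a group homomorphism into the abelian group $A$, that $f$ is $A$-multilinear, and that $\varphi(G)\subseteq Z(G)$ together with $x\oline{*}^g y=x\varphi(g)$. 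As in the two multiple conjugation biquandle cocycle statements proved earlier in this section, no further hypothesis on $f$ or $\lambda$ is needed for these cases.

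The substantive case is the purely singleton generator $\langle y\rangle\langle a\rangle\langle b\rangle\langle c\rangle$ with $a=(x_1,g_1)$, $b=(x_2,g_2)$, $c=(x_3,g_3)$, for which $\partial_3$ produces the usual six-term alternating sum. I would substitute the operation formulas $x\uline{*}^g y=xg+y(\varphi(g)-g)$ and $x\oline{*}^g y=x\varphi(g)$, together with the corresponding $(X\times G)$-set action on $y$, expand each of the six terms by $A$-multilinearity of $f$, and then collapse the result using $G$-invariance $f(x_1g,\ldots,x_ng)=f(x_1,\ldots,x_n)$, the centrality of $\varphi(G)$, and the identity $\lambda(h^{-1}gh)=\lambda(g)$; this is the multiple-conjugation-biquandle analogue of the Mochizuki-type computation that makes an Alexander $2$-cocycle close up. In parts (2) and (2)$'$ the same expansion is carried out with the region variable $y$ tracked through the action: in (2), where $Y=X\times G$ with $*=\uline{*}$, the first coordinate of $y*(x_1,g_1)$ is $x\uline{*}^{g_1}x_1$ and its $G$-coordinate is a conjugate of $g$, so the prefactor $\lambda(g)$ is unchanged; in (2)$'$, where $Y=X$ with $y*(x,g)=y\uline{*}^g x$, the role of the prefactor $\lambda(g)$ is taken over by the factor $1-\varphi(g_1)^{-1}g_1$ sitting in the first slot of $f$, and $G$-invariance of $f$ again closes the argument.

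The step I expect to be the main obstacle is precisely this six-term cancellation for $\langle y\rangle\langle a\rangle\langle b\rangle\langle c\rangle$: arranging the signs, the nested conjugations on the $G$-coordinates, and the matching of $\lambda$-factors so that multilinearity and $G$-invariance of $f$ produce an exact telescoping, and, in parts (2) and (2)$'$, checking carefully that the proposed formula for $\Phi_f$ is genuinely compatible with the $(X\times G)$-set structure used there (so that $\Phi_f$ really is well defined on $C_2(X\times G)_{X\times G}$, respectively $C_2(X\times G)_X$, before the cocycle identity is even addressed).
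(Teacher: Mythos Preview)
Your proposal is correct and follows exactly the route the paper intends: the paper's own proof reads in full ``This is seen by a direct computation. The details will appear in a forthcoming paper.'' So there is nothing to compare against beyond the fact that both the paper and you agree it is a direct verification of well-definedness on $C_2=P_2/D_2$ together with $\Phi_f\circ\partial_3=0$ on the four generator types; your outline is in fact considerably more detailed than what the paper supplies.

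One small imprecision worth flagging: for the mixed generators $\langle y\rangle\langle a,b\rangle\langle c\rangle$ and $\langle y\rangle\langle a\rangle\langle b,c\rangle$, the surviving length-one terms do not literally cancel ``in pairs''. For instance, in $\partial_3(\langle y\rangle\langle a,b\rangle\langle c\rangle)$ with $a=(x,g_1)$, $b=(x,g_2)$, $c=(x_3,g_3)$ the three nonvanishing contributions carry prefactors $\lambda(g_1^{-1}g_2)$, $-\lambda(g_2)$, $\lambda(g_1)$, and it is the homomorphism identity $\lambda(g_1^{-1}g_2)=\lambda(g_2)-\lambda(g_1)$ (together with $G$-invariance of $f$ to align the arguments) that forces the sum to zero. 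Likewise, for $\langle y\rangle\langle a\rangle\langle b,c\rangle$ the three survivors combine via $A$-multilinearity in the second slot of $f$ rather than by pairwise cancellation. The mechanism you name---$\lambda$ a homomorphism, $f$ multilinear and $G$-invariant, $\varphi(G)\subset Z(G)$---is exactly right; only the phrase ``cancel in pairs'' undersells it.
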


\begin{proof}
This is seen by a direct computation.
The details will appear in a forthcoming paper.
\end{proof}

\section{Cocycle invariants} \label{sect:cocycleinvariant}

For an $X_Y$-coloring $C$ of an $S^1$-oriented handlebody-link diagram, we define the local chains $w(\xi;C)\in C_2(X)_Y$ at each crossing $\xi$ and each vertex $\xi$ of $D$ by
\[ \begin{array}{cc}
w\left(~
\begin{minipage}{45pt}
\begin{picture}(45,40)(-5,0)
 \put(40,40){\vector(-1,-1){40}}
 \put(0,40){\line(1,-1){18}}
 \put(22,18){\vector(1,-1){18}}
 \put(5,35){\makebox(0,0){\normalsize$\nearrow$}}
 \put(5,5){\makebox(0,0){\normalsize$\searrow$}}
 \put(-3,40){\makebox(0,0)[r]{\normalsize$a$}}
 \put(-3,0){\makebox(0,0)[r]{\normalsize$b$}}
 \put(-5,14){\framebox(12,12){\normalsize$y$}}
\end{picture}
\end{minipage}
~;C\right)=\langle y\rangle\langle a\rangle\langle b\rangle,
&w\left(~
\begin{minipage}{45pt}
\begin{picture}(45,40)(-5,0)
 \put(0,40){\vector(1,-1){40}}
 \put(40,40){\line(-1,-1){18}}
 \put(18,18){\vector(-1,-1){18}}
 \put(5,35){\makebox(0,0){\normalsize$\nearrow$}}
 \put(5,5){\makebox(0,0){\normalsize$\searrow$}}
 \put(-3,40){\makebox(0,0)[r]{\normalsize$b$}}
 \put(-3,0){\makebox(0,0)[r]{\normalsize$a$}}
 \put(-5,14){\framebox(12,12){\normalsize$y$}}
\end{picture}
\end{minipage}
~;C\right)=-\langle y\rangle\langle a\rangle\langle b\rangle, \vspace{1em}\\
w\left(~
\begin{minipage}{50pt}
\begin{picture}(50,40)(-5,0)
 \put(20,20){\vector(0,-1){20}}
 \put(0,40){\vector(1,-1){20}}
 \put(40,40){\vector(-1,-1){20}}
 \put(21,10){\makebox(0,0){\normalsize$\rightarrow$}}
 \put(5,35){\makebox(0,0){\normalsize$\nearrow$}}
 \put(35,35){\makebox(0,0){\normalsize$\searrow$}}
 \put(23,0){\makebox(0,0)[l]{\normalsize$b$}}
 \put(-3,40){\makebox(0,0)[r]{\normalsize$a$}}
 \put(-5,0){\framebox(12,12){\normalsize$y$}}
\end{picture}
\end{minipage}
~;C\right)=\langle y\rangle\langle a,b\rangle,
&w\left(~
\begin{minipage}{50pt}
\begin{picture}(50,40)(-5,0)
 \put(20,40){\vector(0,-1){20}}
 \put(20,20){\vector(-1,-1){20}}
 \put(20,20){\vector(1,-1){20}}
 \put(21,30){\makebox(0,0){\normalsize$\rightarrow$}}
 \put(5,5){\makebox(0,0){\normalsize$\searrow$}}
 \put(35,5){\makebox(0,0){\normalsize$\nearrow$}}
 \put(23,40){\makebox(0,0)[l]{\normalsize$b$}}
 \put(-3,0){\makebox(0,0)[r]{\normalsize$a$}}
 \put(-5,28){\framebox(12,12){\normalsize$y$}}
\end{picture}
\end{minipage}
~;C\right)=-\langle y\rangle\langle a,b\rangle.
\end{array} \]
We define a chain by $W(D;C)=\sum_{\xi\in C(D)\cup V(D)}w(\xi;C)\in C_2(X)_Y$.

\begin{lemma}
The chain $W(D;C)$ is a $2$-cycle of $C_*(X)_Y$.
Furthermore, for cohomologous $2$-cocycles $\theta,\theta'$ of $C^*(X;A)_Y$, we have $\theta(W(D;C))=\theta'(W(D;C))$.
\end{lemma}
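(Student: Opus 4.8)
The plan is to establish the two statements in turn; the first is the substantive one, and the second follows formally from it.

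Since $D_0(X)_Y=D_1(X)_Y=0$ by definition, we have $C_1(X)_Y=P_1(X)_Y$, so the first assertion amounts to proving $\partial_2W(D;C)=0$ in $P_1(X)_Y$. First I would compute $\partial_2w(\xi;C)$ for each of the four local types of $\xi$: a positive crossing, a negative crossing, and the two types of trivalent vertex. Using the formulas
\[ \partial_2(\langle y\rangle\langle x_1\rangle\langle x_2\rangle)=\langle y*x_1\rangle\langle x_2\oline{*}x_1\rangle-\langle y\rangle\langle x_2\rangle-\langle y*x_2\rangle\langle x_1\uline{*}x_2\rangle+\langle y\rangle\langle x_1\rangle \]
and
\[ \partial_2(\langle y\rangle\langle x_1,x_2\rangle)=\langle y*x_1\rangle\langle x_1^{-1}x_2\oline{*}x_1\rangle-\langle y\rangle\langle x_2\rangle+\langle y\rangle\langle x_1\rangle \]
together with the coloring conditions of Definition~\ref{def:coloring}, I expect that in each case $\partial_2w(\xi;C)$ is a signed sum $\sum_{\alpha}\varepsilon(\alpha,\xi)\langle z_\alpha\rangle\langle C(\alpha)\rangle$ over the semi-arcs $\alpha$ incident to $\xi$, where $C(\alpha)$ is the color of $\alpha$, $z_\alpha$ is the color of the region lying on a fixed side of $\alpha$ — the side singled out by the normal-orientation convention, hence constant along $\alpha$ and independent of the endpoint — and $\varepsilon(\alpha,\xi)=+1$ if $\alpha$ is oriented towards $\xi$ and $-1$ otherwise.

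Granting this local computation, the first assertion follows by a cancellation argument: summing over $\xi\in C(D)\cup V(D)$, each semi-arc $\alpha$ of $D$ occurs exactly twice, once for each of its two endpoints, and since $\alpha$ points towards one endpoint and away from the other, its two contributions $+\langle z_\alpha\rangle\langle C(\alpha)\rangle$ and $-\langle z_\alpha\rangle\langle C(\alpha)\rangle$ cancel (circle components of $D$ carrying no crossing contribute nothing). Hence $\partial_2W(D;C)=0$, so $W(D;C)$ is a $2$-cycle of $C_*(X)_Y$. For the second assertion, since $\theta$ and $\theta'$ are cohomologous we may write $\theta'-\theta=\delta^1f=f\circ\partial_2$ for some $f\in C^1(X;A)_Y=\operatorname{Hom}(C_1(X)_Y,A)$, and then $\theta'(W(D;C))-\theta(W(D;C))=f(\partial_2W(D;C))=f(0)=0$ by the first part.

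The only delicate point is the bookkeeping in the local computation: with a single fixed convention for the normal orientation and for reading off region colors, one must check, uniformly across the four local pictures, that the region attached to a semi-arc $\alpha$ in $\partial_2w(\xi;C)$ does not depend on which endpoint of $\alpha$ one is at, and that its sign is governed purely by the orientation of $\alpha$ relative to $\xi$. This is the biquandle analogue of the verification carried out in \cite{CarterIshiiSaitoTanaka16} for multiple conjugation quandles, and no new phenomena arise; the degenerate submodule $D_*(X)_Y$ plays no role here because $D_0(X)_Y$ and $D_1(X)_Y$ vanish.
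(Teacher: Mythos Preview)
Your proposal is correct and follows exactly the approach the paper has in mind: the paper's own proof simply refers to Lemma~9 of \cite{CarterIshiiSaitoTanaka16} and says the argument carries over with the biquandle boundary map, and what you have written is precisely that argument spelled out---the local computation of $\partial_2 w(\xi;C)$ as a signed sum over incident semi-arcs (with sign given by in/out orientation and region label taken on the normal-tail side), followed by pairwise cancellation along semi-arcs, and then the formal coboundary argument for the second claim. Your remark that $D_1(X)_Y=0$ so one may work in $P_1(X)_Y$, and your observation that no new phenomena arise beyond the multiple conjugation quandle case, match the paper's treatment.
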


\begin{proof}
Refer to the proof of Lemma~9 in~\cite{CarterIshiiSaitoTanaka16}, which is the quandle version of this lemma.
The proof can be directly applied to our case with our boundary map.
\end{proof}

For a $2$-cocycle $\theta\in C^2(X;A)_Y$, we define
\begin{align*}
\mathcal{H}(D)&=\{[W(D;C)]\in H_2(X)_Y\,|\,C\in\operatorname{Col}_{X_Y}(D)\}, \text{ and} \\
\Phi_\theta(D)&=\{\theta(W(D;C))\in A\,|\,C\in\operatorname{Col}_{X_Y}(D)\}
\end{align*}
as multisets.

\begin{theorem}
Let $H$ be an $S^1$-oriented handlebody-link, $D$ a diagram of $H$.
Then $\mathcal{H}(D)$ and $\Phi_\theta(D)$ are invariants of $H$.
\end{theorem}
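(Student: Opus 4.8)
The plan is to reduce the statement, via Theorem~\ref{thm:ReidemeisterMoves}, to invariance under each of the Y-oriented moves R1--R6. Suppose $D$ and $D'$ are diagrams of $H$ differing by a single such move, and let $C\in\operatorname{Col}_{X_Y}(D)$. By the $X_Y$-coloring version of Theorem~\ref{thm:coloring1}, there is a unique $C'\in\operatorname{Col}_{X_Y}(D')$ which coincides with $C$ outside the disk where the move is applied, and $C\mapsto C'$ defines a bijection $\operatorname{Col}_{X_Y}(D)\to\operatorname{Col}_{X_Y}(D')$. Hence it suffices to prove that $[W(D;C)]=[W(D';C')]$ in $H_2(X)_Y$: this immediately gives $\mathcal{H}(D)=\mathcal{H}(D')$ as multisets, and since a $2$-cocycle $\theta\in C^2(X;A)_Y$ annihilates $\partial_3\bigl(P_3(X)_Y\bigr)$ (because $\delta^2\theta=0$) and annihilates $D_2(X)_Y$ (being defined on $C_2(X)_Y=P_2(X)_Y/D_2(X)_Y$), it also gives $\theta(W(D;C))=\theta(W(D';C'))$ and hence $\Phi_\theta(D)=\Phi_\theta(D')$.

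The core of the proof is therefore a local computation showing, for each move, that
\[ W(D;C)-W(D';C')\in\partial_3\bigl(P_3(X)_Y\bigr)+D_2(X)_Y. \]
Since $W(D;C)$ and $W(D';C')$ agree outside the disk of the move, this difference is the signed sum of the local chains $w(\xi;C)$ over the crossings and vertices appearing inside the disk, and one must identify it with the boundary of an explicit $3$-chain modulo degenerate elements. For R2 and R3 (only crossings involved) the difference is the $\partial_3$-image of a generator $\langle y\rangle\langle a\rangle\langle b\rangle\langle c\rangle$, as read off from the formula for $\partial_3(\langle y\rangle\langle x_1\rangle\langle x_2\rangle\langle x_3\rangle)$ computed above; for R1 the relevant crossing chain becomes, using $x\uline{*}x=x\oline{*}x$, a degenerate chain in $D_2(X)_Y$ and so vanishes in $C_2(X)_Y$. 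For R4, R5, R6, which involve a trivalent vertex, the difference is matched against $\partial_3$ of $3$-chains of mixed type, $\langle y\rangle\langle x_1,x_2\rangle\langle x_3\rangle$, $\langle y\rangle\langle x_1\rangle\langle x_2,x_3\rangle$, and $\langle y\rangle\langle x_1,x_2,x_3\rangle$, together with the $D_3(X)_Y$-generators exhibited above; the $X_Y$-colored versions of these moves recorded in Figure~\ref{fig:coloredReidemeisterMove} supply exactly the labels needed to carry out the matching.

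The step I expect to be the main obstacle is the bookkeeping for R5 and R6, where a vertex passes a strand or another vertex: the colors of the new semi-arcs are expressed through the group multiplication and the derived operation $a\triangle b=b^{-1}a\oline{*}b$, and one must reconcile the sum of the several new local chains with the two parts $\widetilde{\partial}_3$ and $\widehat{\partial}_3$ of the boundary of a carefully chosen $3$-chain mixing group-type brackets $\langle\,\cdot\,,\cdot\,\rangle$ and conjugation-type brackets $\langle\,\cdot\,\rangle$; this is precisely where the prismatic structure of $C_*(X)_Y$ is essential. Since the multiple conjugation quandle analogue of this theorem is proved by exactly this kind of case analysis in \cite{CarterIshiiSaitoTanaka16} (its Lemma~9 and the surrounding discussion), I would follow that template, replacing the quandle boundary operator by $\partial=\widetilde{\partial}+\widehat{\partial}$ and the quandle coloring rules by the crossing and vertex rules of Definition~\ref{def:coloring}; I would write out one representative case, say R5, in full detail and indicate that the remaining cases are entirely analogous.
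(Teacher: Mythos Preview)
Your proposal is correct and follows essentially the same strategy as the paper: reduce via the bijection of $X_Y$-colorings to showing $[W(D;C)]=[W(D';C')]$ under each Y-oriented Reidemeister move, then verify locally that the difference of weight chains lies in $\partial_3(P_3(X)_Y)+D_2(X)_Y$. The paper explicitly works out R4 and R5 (exhibiting $\partial_3(\langle y\rangle\langle b,a,ab\rangle)$ and $-\partial_3(\langle y\rangle\langle b,a\rangle\langle x\rangle)$ respectively, after one use of the $D_2$-relation in the R4 case) and leaves the remaining moves to the reader, just as you propose.
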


\begin{proof}
It is sufficient to show that $[W(D;C)]\in H_2(X)_Y$ is invariant under each $X_Y$-colored Reidemeister move.
Here we show this property for the cases of an R4 move and an R5 move.
We leave the proof of the other cases to the reader.

\begin{figure}[h]
\begin{center}
\begin{minipage}{48pt}
\begin{picture}(48,72)(-12,-12)
 \qbezier(0,24)(0,30)(12,36) 
 \qbezier(12,36)(24,42)(24,48) 
 \qbezier(24,24)(24,30)(15,34.5) 
 \qbezier(9,37.5)(0,42)(0,48) 
 \qbezier(12,12)(12,6)(12,0) 
 \qbezier(12,12)(0,18)(0,24) 
 \qbezier(12,12)(24,18)(24,24) 
 \put(2,43){\makebox(0,0){\normalsize$\nearrow$}}
 \put(22,43){\makebox(0,0){\normalsize$\nwarrow$}}
 \put(13,5){\makebox(0,0){\normalsize$\rightarrow$}}
 \put(0,51){\makebox(0,0)[b]{\normalsize$a\uline{*}b$}}
 \put(24,51){\makebox(0,0)[b]{\normalsize$b\oline{*}a$}}
 \put(12,-3){\makebox(0,0)[t]{\normalsize$b^{-1}a\oline{*}b$}}
 \put(-3,24){\makebox(0,0)[r]{\normalsize$b$}}
 \put(27,24){\makebox(0,0)[l]{\normalsize$a$}}
 \put(6,18){\framebox(12,12){\normalsize$y$}}
\end{picture}
\end{minipage}
\hspace{0.5cm}$\overset{\text{R4}}{\leftrightarrow}$\hspace{0.5cm}
\begin{minipage}{64pt}
\begin{picture}(64,72)(-30,-12)
 \qbezier(0,48)(0,36)(0,24)
 \qbezier(24,48)(24,36)(24,24)
 \qbezier(12,12)(12,6)(12,0) 
 \qbezier(12,12)(0,18)(0,24) 
 \qbezier(12,12)(24,18)(24,24) 
 \put(1,43){\makebox(0,0){\normalsize$\rightarrow$}}
 \put(23,43){\makebox(0,0){\normalsize$\leftarrow$}}
 \put(13,5){\makebox(0,0){\normalsize$\rightarrow$}}
 \put(0,51){\makebox(0,0)[b]{\normalsize$a\uline{*}b$}}
 \put(24,51){\makebox(0,0)[b]{\normalsize$b\oline{*}a$}}
 \put(12,-3){\makebox(0,0)[t]{\normalsize$b^{-1}a\oline{*}b$}}
 \put(-30,18){\framebox(24,12){\normalsize$y*b$}}
\end{picture}
\end{minipage}
\end{center}
\caption{Invariance under an R4 move.}
\label{fig:R4-invariance}
\end{figure}
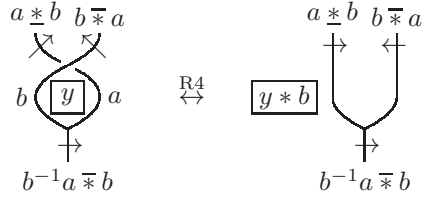

Let $(D,C)$ and $(D',C')$ be $X_Y$-colored diagrams, of $H$, which differ by the R4 move shown in Figure~\ref{fig:R4-invariance}.
Then we have
\begin{align*}
&W(D;C)
=-\langle y\rangle\langle a\rangle\langle b\rangle
-\langle y\rangle\langle b,a\rangle+\mathcal{C}
=-\langle y\rangle\langle a,ab\rangle
+\langle y\rangle\langle b,ab\rangle
-\langle y\rangle\langle b,a\rangle+\mathcal{C}, \\
&W(D';C')=-\langle y*b\rangle\langle b^{-1}a\oline{*}b,a\uline{*}b\rangle+\mathcal{C}
=-\langle y*b\rangle\langle b^{-1}a\oline{*}b,b^{-1}(ab)\oline{*}b\rangle+\mathcal{C},
\end{align*}
for some chain $\mathcal{C}$ in $C_2(X)_Y$.
On the other hand,
\[ \partial_3(\langle y\rangle\langle  b,a,ab\rangle)
=\langle y*b\rangle\langle b^{-1}a\oline{*}b,b^{-1}(ab)\oline{*}b\rangle
-\langle y\rangle\langle a,ab\rangle+\langle y\rangle\langle b,ab\rangle
-\langle y\rangle\langle b,a\rangle, \]
which implies that $W(D;C)-W(D';C')=\partial_3(\langle y\rangle\langle b,a,ab\rangle)$.
Therefore we have $[W(D;C)]=[W(D';C')]\in H_2(X)_Y$.

\begin{figure}[h]
\begin{center}
\begin{minipage}{70pt}
\begin{picture}(70,72)(-20,-12)
 \qbezier(0,40)(0,44)(0,48)
 \qbezier(16,40)(16,44)(16,48)
 \qbezier(8,32)(8,28)(8,24) 
 \qbezier(8,32)(0,36)(0,40) 
 \qbezier(8,32)(16,36)(16,40) 
 \qbezier(32,24)(32,36)(32,48)
 \qbezier(8,0)(8,6)(20,12) 
 \qbezier(20,12)(32,18)(32,24) 
 \qbezier(32,0)(32,6)(23,10.5) 
 \qbezier(17,13.5)(8,18)(8,24) 
 \put(0,51){\makebox(0,0)[b]{\normalsize$b$}}
 \put(16,51){\makebox(0,0)[b]{\normalsize$a\triangle b$}}
 \put(30,51){\makebox(0,0)[bl]{\normalsize$x\oline{*}a$}}
 \put(8,-3){\makebox(0,0)[t]{\normalsize$x$}}
 \put(32,-3){\makebox(0,0)[t]{\normalsize$a\uline{*}x$}}
 \put(5,24){\makebox(0,0)[r]{\normalsize$a$}}
 \put(-20,18){\framebox(12,12){\normalsize$y$}}
\end{picture}
\end{minipage}
\hspace{0.5cm}$\overset{\text{R5}}{\leftrightarrow}$\hspace{0.5cm}
\begin{minipage}{120pt}
\begin{picture}(120,72)(-20,-12)
 \qbezier(16,32)(16,36)(24,40) 
 \qbezier(24,40)(32,44)(32,48) 
 \qbezier(32,32)(32,36)(26,39) 
 \qbezier(22,41)(16,44)(16,48) 
 \qbezier(0,32)(0,40)(0,48)
 \qbezier(0,16)(0,20)(8,24) 
 \qbezier(8,24)(16,28)(16,32) 
 \qbezier(16,16)(16,20)(10,23) 
 \qbezier(6,25)(0,28)(0,32) 
 \qbezier(32,16)(32,24)(32,32)
 \qbezier(24,8)(24,4)(24,0) 
 \qbezier(24,8)(16,12)(16,16) 
 \qbezier(24,8)(32,12)(32,16) 
 \qbezier(0,0)(0,8)(0,16)
 \put(0,51){\makebox(0,0)[b]{\normalsize$b$}}
 \put(16,51){\makebox(0,0)[b]{\normalsize$a\triangle b$}}
 \put(30,51){\makebox(0,0)[bl]{\normalsize$(x\oline{*}b)\oline{*}(a\triangle b)$}}
 \put(0,-3){\makebox(0,0)[t]{\normalsize$x$}}
 \put(24,-3){\makebox(0,0)[t]{\normalsize$a\uline{*}x$}}
 \put(10,32){\makebox(0,0)[l]{\normalsize$x\oline{*}b$}}
 \put(10,16){\makebox(0,0)[l]{\normalsize$b\uline{*}x$}}
 \put(35,32){\makebox(0,0)[l]{\normalsize$(a\triangle b)\uline{*}(x\oline{*}b)$}}
 \put(35,16){\makebox(0,0)[l]{\normalsize$(a\uline{*}x)\triangle(b\uline{*}x)$}}
 \put(-20,18){\framebox(12,12){\normalsize$y$}}
\end{picture}
\end{minipage}
\end{center}
\caption{Invariance under an R5 move.}
\label{fig:R5-invariance}
\end{figure}

Let $(D,C)$ and $(D',C')$ be $X_Y$-colored diagrams, of $H$, which differ by the R5 move shown in Figure~\ref{fig:R5-invariance}, where all arcs are directed from top to bottom, and $a\triangle b=b^{-1}a\oline{*}b$.
Then we have
\begin{align*}
&W(D;C)=\langle y\rangle\langle b,a\rangle
+\langle y\rangle\langle a\rangle\langle x\rangle+\mathcal{C}, \\
&W(D';C')=\langle y\rangle\langle b\rangle\langle x\rangle
+\langle y*b\rangle\langle b^{-1}a\oline{*} b\rangle\langle x\oline{*} b\rangle
+\langle y*x\rangle\langle b\uline{*}x\rangle\langle a\uline{*}x\rangle+\mathcal{C},
\end{align*}
for some chain $\mathcal{C}$ in $C_2(X)_Y$.
On the other hand,
\begin{align*}
\partial_3(\langle y\rangle\langle b,a\rangle\langle x\rangle)
&=\langle y*b\rangle\langle b^{-1}a\oline{*}b\rangle\langle x\oline{*}b\rangle
-\langle y\rangle\langle a\rangle\langle x\rangle \\
&\hspace{1em}
+\langle y\rangle\langle b\rangle\langle x\rangle
+\langle y*x\rangle\langle b\uline{*}x\rangle\langle a\uline{*}x\rangle
-\langle y\rangle\langle b,a\rangle,
\end{align*}
which implies that
$W(D;C)-W(D';C')=-\partial_3(\langle y\rangle\langle b,a\rangle\langle x\rangle)$.
Therefore we have $[W(D;C)]=[W(D';C')]\in H_2(X)_Y$.
\end{proof}

For an $S^1$-oriented handlebody-link $H$, we denote by $H^*$ the mirror image of $H$, and denote by $-H$ the $S^1$-oriented handlebody-link obtained from $H$ by reversing its $S^1$-orientation.
Then we have the following proposition.

\begin{proposition}
For an $S^1$-oriented handlebody-link $H$, $\mathcal{H}(-H^*)=-\mathcal{H}(H)$ and $\Phi_\theta(-H^*)=-\Phi_\theta(H)$.
\end{proposition}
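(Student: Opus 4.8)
The plan is the following. Fix a diagram $D$ of $H$ and let $\overline{D}$ be the diagram obtained from $D$ by reversing every edge orientation and switching every crossing. Reversing a Y-orientation turns a trivalent vertex of ``two incoming, one outgoing'' type into one of ``one incoming, two outgoing'' type and vice versa, so $\overline{D}$ is again a diagram of a Y-oriented spatial trivalent graph; its induced $S^1$-orientation on each genus-$1$ component is reversed, and switching all crossings realizes the mirror image, so $\overline{D}$ represents $-H^*=(-H)^*$. I will exhibit a bijection $\operatorname{Col}_{X_Y}(D)\to\operatorname{Col}_{X_Y}(\overline{D})$, $C\mapsto\overline{C}$, for which $[W(\overline{D};\overline{C})]=-[W(D;C)]$ in $H_2(X)_Y$. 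Granting this, for a $2$-cocycle $\theta$ one has $\theta(W(\overline{D};\overline{C}))=-\theta(W(D;C))$ (both $[W(D;C)]$ and the functional induced by $\theta$ are defined on $H_2(X)_Y$), so as multisets $\mathcal{H}(-H^*)=\{[W(\overline{D};\overline{C})]\mid\overline{C}\}=\{-[W(D;C)]\mid C\}=-\mathcal{H}(H)$ and likewise $\Phi_\theta(-H^*)=-\Phi_\theta(H)$.

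To construct the bijection, note that switching a crossing replaces its local coloring relation $S(x,y)=(y\oline{*}x,x\uline{*}y)$ by the inverse relation, while reversing both strand orientations at that crossing interchanges its incoming and outgoing semi-arcs; performed together, the two operations leave the local relation intact after a relabelling of the four semi-arc colors, and this relabelling is unambiguous because $\uline{*}a$, $\oline{*}a$ and $S$ are bijective by axiom (B2). Similarly, at each vertex the group-theoretic relation of Definition~\ref{def:coloring} and the region relation $y\mapsto y*a$ are reversible using the group structure of the $G_\lambda$ and the $X$-set axioms. Propagating the colors of $C$ through $\overline{D}$ in this way yields a well-defined $\overline{C}\in\operatorname{Col}_{X_Y}(\overline{D})$, and carrying out the same procedure on $\overline{D}$ returns $(D,C)$, so $C\mapsto\overline{C}$ is a bijection. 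This is the biquandle analogue of the discussion for multiple conjugation quandles in~\cite{CarterIshiiSaitoTanaka16}.

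It then remains to compare $W(D;C)$ with $W(\overline{D};\overline{C})$. At every crossing, switching the crossing interchanges the two pictures in the definition of $w(\xi;C)$ and hence flips the sign of the local chain, while reversing both orientations does not change the crossing sign; at every vertex, reversing the Y-orientation interchanges the two vertex pictures and again flips the sign of the local chain. Thus each local chain of $\overline{D}$ is the negative of the correspondingly relabelled local chain of $D$, and summing over all crossings and vertices one obtains $W(\overline{D};\overline{C})=-W(D;C)$ in $C_2(X)_Y$ modulo a boundary: the terms $\partial_3(\langle y\rangle\langle p,q,r\rangle)$ together with the degenerate relations $\langle y\rangle\langle a\rangle\langle b\rangle\equiv\langle y\rangle\langle a,ab\rangle-\langle y\rangle\langle b,ab\rangle$ absorb the relabelling of the colors introduced in the previous step, exactly as in the proof of the invariance theorem above. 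Hence $[W(\overline{D};\overline{C})]=-[W(D;C)]$ in $H_2(X)_Y$, which completes the argument.

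The main obstacle is the second and third steps: organizing, crossing by crossing and vertex by vertex, the relabelling of semi-arc and region colors produced by the reversal-plus-mirror operation --- in particular the $X$-set (region) colors, where reversal exchanges the roles of $y$ and $y*a$ --- and checking that after all these relabellings the signed total of the local chains is exactly $-W(D;C)$ modulo degenerate chains and boundaries. The sign bookkeeping itself is routine; the color bookkeeping for the $X$-set part is the only genuinely new point beyond the computation of~\cite{CarterIshiiSaitoTanaka16}.
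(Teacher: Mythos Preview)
Your plan takes a different realization of the mirror than the paper does, and the step you yourself flag as the obstacle is a genuine gap: you never actually construct $\overline{C}$, verify that it is an $X_Y$-coloring of $\overline{D}$, or show that after the unspecified relabelling the local chains sum to $-W(D;C)$ modulo boundaries. Invoking ``$\partial_3(\langle y\rangle\langle p,q,r\rangle)$ together with the degenerate relations'' is not a proof; one would have to write down, crossing by crossing and vertex by vertex, exactly which $3$-chains witness the equalities, and that bookkeeping is not carried out. The analogy with the invariance theorem is not apt either, since there the two diagrams differ by a single Reidemeister move, whereas your $(D,C)$ and $(\overline{D},\overline{C})$ differ globally.

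The paper avoids all of this by reflecting through a vertical line in the plane of projection, $\varphi(x,y)=(-x,y)$, rather than switching crossings. This is still a mirror (extend $\varphi$ to $(x,y,z)\mapsto(-x,y,z)$, which reverses ambient orientation and leaves over/under information intact), but now the coloring transports trivially by pullback: $C^{*}:=C\circ\varphi$ is automatically an $X_Y$-coloring of $-D^{*}$, with no relabelling at all. Moreover, reflection followed by orientation reversal carries each of the four local pictures in the definition of $w(\xi;C)$ to exactly the opposite-sign picture with the \emph{same} labels $y,a,b$, so one gets the chain-level equality $W(-D^{*};C^{*})=-W(D;C)$ in $C_2(X)_Y$, not merely in homology. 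The entire proof reduces to matching four pairs of pictures; no boundaries or degenerate relations enter.
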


\begin{proof}
Let $D$ be a diagram of $H$.
We suppose that $D$ is depicted in an $xy$-plane $\mathbb{R}^2$.
Let $D^*$ be the image of $D$ by the involution $\varphi:\mathbb{R}^2\to\mathbb{R}^2$ defined by $\varphi(x,y)=(-x,y)$.
Then $D^*$ is a diagram of $H^*$.
We obtain the diagram $-D^*$ of $-H^*$ by reversing the $S^1$-orientation of $H^*$.
Then the composition $C^*=C\circ\varphi$ is an $X_Y$-coloring of $-D^*$.
It is sufficient to show that $W(D;C)=-W(-D^*;C^*)$, which follows from
\[ \begin{array}{cc}
w\left(~
\begin{minipage}{45pt}
\begin{picture}(45,40)(-5,0)
 \put(40,40){\vector(-1,-1){40}}
 \put(0,40){\line(1,-1){18}}
 \put(22,18){\vector(1,-1){18}}
 \put(5,35){\makebox(0,0){\normalsize$\nearrow$}}
 \put(5,5){\makebox(0,0){\normalsize$\searrow$}}
 \put(-3,40){\makebox(0,0)[r]{\normalsize$a$}}
 \put(-3,0){\makebox(0,0)[r]{\normalsize$b$}}
 \put(-5,14){\framebox(12,12){\normalsize$y$}}
\end{picture}
\end{minipage}
~;C\right)=\langle y\rangle\langle a\rangle\langle b\rangle,
&w\left(~
\begin{minipage}{45pt}
\begin{picture}(45,40)
 \put(40,0){\vector(-1,1){40}}
 \put(22,22){\vector(1,1){18}}
 \put(18,18){\line(-1,-1){18}}
 \put(35,35){\makebox(0,0){\normalsize$\nwarrow$}}
 \put(35,5){\makebox(0,0){\normalsize$\swarrow$}}
 \put(43,40){\makebox(0,0)[l]{\normalsize$a$}}
 \put(43,0){\makebox(0,0)[l]{\normalsize$b$}}
 \put(33,14){\framebox(12,12){\normalsize$y$}}
\end{picture}
\end{minipage}
~;C^*\right)=-\langle y\rangle\langle a\rangle\langle b\rangle, \vspace{1em}\\
w\left(~
\begin{minipage}{50pt}
\begin{picture}(50,40)(-5,0)
 \put(20,20){\vector(0,-1){20}}
 \put(0,40){\vector(1,-1){20}}
 \put(40,40){\vector(-1,-1){20}}
 \put(21,10){\makebox(0,0){\normalsize$\rightarrow$}}
 \put(5,35){\makebox(0,0){\normalsize$\nearrow$}}
 \put(35,35){\makebox(0,0){\normalsize$\searrow$}}
 \put(23,0){\makebox(0,0)[l]{\normalsize$b$}}
 \put(-3,40){\makebox(0,0)[r]{\normalsize$a$}}
 \put(-5,0){\framebox(12,12){\normalsize$y$}}
\end{picture}
\end{minipage}
~;C\right)=\langle y\rangle\langle a,b\rangle,
&w\left(~
\begin{minipage}{50pt}
\begin{picture}(50,40)(-5,0)
 \put(20,20){\vector(1,1){20}}
 \put(20,20){\vector(-1,1){20}}
 \put(20,0){\vector(0,1){20}}
 \put(19,10){\makebox(0,0){\normalsize$\leftarrow$}}
 \put(5,35){\makebox(0,0){\normalsize$\swarrow$}}
 \put(35,35){\makebox(0,0){\normalsize$\nwarrow$}}
 \put(23,0){\makebox(0,0)[l]{\normalsize$b$}}
 \put(43,40){\makebox(0,0)[l]{\normalsize$a$}}
 \put(33,0){\normalsize\framebox(12,12){$y$}}
\end{picture}
\end{minipage}
~;C^*\right)=-\langle y\rangle\langle a,b\rangle.
\end{array} \]
\end{proof}

\begin{example}
Let $G=SL(2,\mathbb{Z}_6)$ and $X=\mathbb{Z}_6^2$, where we regard $X$ as the right $\mathbb{Z}_6[G]$-module with matrix product
\[ (x,y)\cdot\begin{pmatrix} a & b \\ c & d \end{pmatrix}=(ax+cy,bx+dy). \]
We define a homomorphism $\varphi:G\to Z(G)$ by
\[ \varphi(\begin{pmatrix} a & b \\ c & d \end{pmatrix})
=\begin{pmatrix} (-1)^{(a+b+c+1)(b+c+d+1)} & 0 \\ 0 & (-1)^{(a+b+c+1)(b+c+d+1)} \end{pmatrix}. \]
Then as shown in (2) of Example~\ref{exam:2.5} and Proposition~\ref{prop:G-family2MCB}, $X\times G=\bigsqcup_{x\in X}\{x\}\times G$ can be regarded as a multiple conjugation biquandle with the operations $(x,g)\uline{*}(y,h)=(xg+y(\varphi(g)-g),h^{-1}gh)$ and $(x,g)\oline{*}(y,h)=(x\varphi(g),g)$.
Set a homomorphism $\lambda:G\to\mathbb{Z}_6$ by
\[ \lambda(\begin{pmatrix} a & b \\ c & d \end{pmatrix})=2(a+d)(b-c)(1-bc). \]
We note that the determinant $\det:G\to\mathbb{Z}_6$ is a $G$-invariant $\mathbb{Z}_6$-multilinear map.
Thus as shown in (1) of Proposition~\ref{prop:cocycle}, we have a $2$-cocycle $\Phi_{\det}:C_2(X\times G)\to\mathbb{Z}_6$, of $X\times G=\bigsqcup_{x\in X}\{x\}\times G$, defined by
\begin{align*}
&\Phi_{\det}(\langle(x_1,g_1)\rangle\langle(x_2,g_2)\rangle)
=\lambda(g_1)\det(x_1-x_2,x_2(1-\varphi(g_2)g_2^{-1})), \text{ and} \\
&\Phi_{\det}(\langle(x_1,g_1),(x_2,g_2)\rangle)=0.
\end{align*}
For the handlebody-knot $5_2$ depicted in Figure~\ref{fig:5_2}, we computed the invariant $\Phi_{\det}(5_2)$ and as the result, we have
\[ \Phi_{\det}(5_2)=\{\text{$0$ ($2987712$ times), $2$ ($157248$ times), $4$ ($157248$ times)}\}. \]
\end{example}

\begin{figure}
\mbox{}\hfill
\begin{picture}(60,65)
 \qbezier(0,30)(0,32)(0,35)
 \qbezier(0,35)(0,50)(15,50)
 \qbezier(25,50)(35,50)(45,50)
 \qbezier(45,50)(60,50)(60,35)
 \qbezier(60,35)(60,32)(60,30)
 \qbezier(0,30)(20,30)(35,30)\qbezier(45,30)(50,30)(60,30)
 \qbezier(60,30)(60,22)(60,15)
 \qbezier(60,15)(60,0)(45,0)
 \qbezier(45,0)(40,0)(35,5)
 \qbezier(35,5)(30,10)(25,15)
 \qbezier(25,15)(20,20)(20,25)
 \qbezier(20,35)(20,45)(20,55)
 \qbezier(20,55)(20,65)(30,65)
 \qbezier(30,65)(40,65)(40,55)
 \qbezier(40,45)(40,40)(40,25)
 \qbezier(40,25)(40,20)(35,15)
 \qbezier(35,15)(34,14)(33,13)
 \qbezier(27,7)(26,6)(25,5)
 \qbezier(25,5)(20,0)(15,0)
 \qbezier(15,0)(0,0)(0,15)
 \qbezier(0,15)(0,22)(0,30)
\end{picture}
\hfill\mbox{}
\caption{The handlebody-knot $5_2$.}
\label{fig:5_2}
\end{figure}
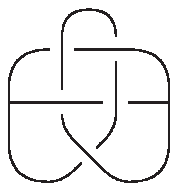

\end{document}